\def\th@plain{%
  \upshape 
}
\renewenvironment{proof}[1][\proofname]{\par
  \pushQED{\qed}%
  \normalfont \topsep6\p@\@plus6\p@\relax
  \trivlist
  \item[\hskip\labelsep
        \bfseries
    #1\@addpunct{.}]\ignorespaces
}{%
  \popQED\endtrivlist\@endpefalse
}
\newtheorem{theorem}{Theorem}
\numberwithin{theorem}{section}
\newtheorem{lemma}{Lemma}
\newtheorem{conjecture}{Conjecture}
\newtheorem*{conjecture*}{Conjecture}
\newtheorem{case}{Case}
\newtheorem{subcase}{Subcase}[case]
\newtheorem{subsubcase}{Subcase}[subcase]
\newtheorem{subsubsubcase}{Subcase}[subsubcase]
\newtheorem{claim}{Claim}
\newtheorem{fact}{Fact}
\theoremstyle{definition}
\newcounter{Hcase}
\newcounter{Hclaim}
\newcommand{\resetcounter}{\stepcounter{Hcase}\setcounter{case}{0}\stepcounter{Hclaim}\setcounter{claim}{0}}
\newcommand{\etal}{et~al.\ }
\def\int(#1){\mathrm{int}(#1)}
\def\ext(#1){\mathrm{ext}(#1)}
\def\Int(#1){\mathrm{Int}(#1)}
\def\Ext(#1){\mathrm{Ext}(#1)}
\def\ad(#1){\mathrm{ad}(#1)}
\def\mad(#1){\mathrm{mad}(#1)}
\def\la(#1){\mathrm{la}(#1)}
\newcommand{\mul}{\mathrm{mul}}
\begin{document}
\title{Further result on acyclic chromatic index of planar graphs}
\author{Tao Wang\,\textsuperscript{a, b, }\footnote{{\tt Corresponding
author: wangtao@henu.edu.cn} }\ \ \ Yaqiong Zhang\,\textsuperscript{b}\\
{\small \textsuperscript{a}Institute of Applied Mathematics}\\
{\small Henan University, Kaifeng, 475004, P. R. China}\\
{\small \textsuperscript{b}College of Mathematics and Information Science}\\
{\small Henan University, Kaifeng, 475004, P. R. China}}
\date{July 1, 2015}
\maketitle
\begin{abstract}
An acyclic edge coloring of a graph $G$ is a proper edge coloring such that every cycle is colored with at least three colors. The acyclic chromatic index $\chiup_{a}'(G)$ of a graph $G$ is the least number of colors in an acyclic edge coloring of $G$. It was conjectured that $\chiup'_{a}(G)\leq \Delta(G) + 2$ for any simple graph $G$ with maximum degree $\Delta(G)$. In this paper, we prove that every planar graph $G$ admits an acyclic edge coloring with $\Delta(G) + 6$ colors.

{\bf Keywords:} Acyclic edge coloring; Acyclic chromatic index; $\kappa$-deletion-minimal graph; $\kappa$-minimal graph; Acyclic edge coloring conjecture
\end{abstract}
\section{Introduction}

All graphs considered in this paper are finite, simple and undirected. An acyclic edge coloring of a graph $G$ is a proper edge coloring such that every cycle is colored with at least three colors. The acyclic chromatic index $\chiup_{a}'(G)$ of a graph $G$ is the least number of colors in an acyclic edge coloring of $G$. It is obvious that $\chiup'_{a}(G)\geq \chiup'(G)\geq \Delta(G)$. Fiam\v{c}{\'i}k \cite{MR526851} stated the following conjecture in 1978, which is well known as Acyclic Edge Coloring Conjecture, and Alon \etal \cite{MR1837021} restated it in 2001.

\begin{conjecture}\label{graph}
For any graph $G$, $\chiup'_{a}(G)\leq \Delta(G)+2$.
\end{conjecture}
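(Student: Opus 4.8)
The plan is to argue by contradiction using the notion of a $\kappa$-deletion-minimal graph with $\kappa = \Delta(G) + 2$: a graph $G$ with $\chiup'_{a}(G) > \kappa$ but $\chiup'_{a}(H) \leq \kappa$ for every proper subgraph $H$ of $G$. Were the conjecture false, choosing a counterexample that minimizes the number of edges yields such a $G$; each edge deletion then admits an acyclic edge coloring with $\kappa$ colors that one attempts to extend, and the impossibility of extension is to be forced into contradiction with structural sparsity.

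First I would develop the local extension toolkit. Fix an edge $uv$ and an acyclic edge coloring $\varphi$ of $G - uv$ using the palette $[\kappa]$. Call a color $c$ \emph{available} for $uv$ if it appears at neither $u$ nor $v$; at most $\deg(u) + \deg(v) - 2$ colors are excluded by properness, so at least $\kappa - \deg(u) - \deg(v) + 2$ available colors remain. Coloring $uv$ with an available $c$ fails only if it completes a bichromatic cycle, i.e. a $(c,d)$-alternating path between $u$ and $v$ for some companion color $d$. The heart of the method is to bound, for each available $c$, the number of such dangerous companions $d$, and — when direct extension is blocked — to reroute an alternating path by a Kempe-type swap at a low-degree neighbour. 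Iterating these swaps I would show that $G$ contains none of a list of reducible configurations: adjacent vertices of small degree, short cycles through low-degree vertices, and more refined configurations controlling the multiplicity $\mul$ with which a fixed color recurs around a vertex.

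With the configuration list in hand, the final step is a global counting argument: assign charges to vertices (and, in the planar setting, faces), redistribute them by discharging rules, and derive that forbidding every reducible configuration forces the total charge to be simultaneously negative and nonnegative. \textbf{This is exactly where the main obstacle lies.} For planar graphs Euler's formula supplies the global deficit $\sum_{v}(\deg(v) - 4) + \sum_{f}(\deg(f) - 4) = -8$ that a discharging scheme can exploit, and this is precisely the route by which the weaker bound $\Delta(G) + 6$ of the present paper is attained. For an arbitrary graph there is no such sparsity: a $(\Delta+2)$-deletion-minimal graph may be dense and of unbounded degree, so no Euler-type identity constrains it, and the reducible-configuration list alone does not suffice to close the argument.

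I therefore expect that the additive bound $\Delta(G) + 2$ cannot be reached by deterministic discharging alone. The realistic programme is to combine the local recoloring lemmas above with a probabilistic ingredient — the Lovász Local Lemma or an entropy-compression scheme bounding the length and number of alternating paths — to control the dense regime, while handling bounded-degree and planar-like regions by the reducibility analysis. Bridging the gap between the $O(\Delta)$ bounds that such methods currently yield and the conjectured $\Delta(G) + 2$ is the genuine crux, and it is the reason this statement is recorded here as a conjecture rather than a theorem; the result actually established in this paper is the partial advance $\chiup'_{a}(G) \leq \Delta(G) + 6$ for planar $G$.
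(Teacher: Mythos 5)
You have correctly recognized that this statement is the Acyclic Edge Coloring Conjecture of Fiam\v{c}{\'i}k, which the paper does not prove (and which remains open): the paper only states it and establishes the partial result $\chiup'_{a}(G)\leq \Delta(G)+6$ for planar $G$. Your proposal honestly declines to claim a proof and accurately describes the actual machinery of the paper --- $\kappa$-deletion-minimal graphs, Kempe-type recoloring along alternating paths, reducible configurations, and Euler-formula discharging --- together with the genuine obstruction (no sparsity identity for general graphs), so there is nothing to correct.
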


Alon \etal \cite{MR1109695} proved that $\chiup'_{a}(G) \leq 64 \Delta(G)$ for any graph $G$ by using probabilistic method. Molloy and Reed \cite{MR1715600} improved it to $\chiup'_{a}(G) \leq 16 \Delta(G)$. Recently, Ndreca \etal \cite{MR2864444} improved the upper bound to $\lceil 9.62(\Delta(G)-1)\rceil$, and Esperet and Parreau \cite{MR3037985} further improved it to $4\Delta(G) - 4$ by using the so-called entropy compression method. The best known general bound is $\lceil 3.74(\Delta(G)-1)\rceil$ due to Giotis \etal \cite{2014arXiv1407.5374G}. Alon \etal \cite{MR1837021} proved that there is a constant $c$ such that $\chiup'_{a}(G)\leq \Delta(G) + 2$ for a graph $G$ whenever the girth is at least $c\Delta \log\Delta$.

Regarding general planar graph $G$, Fiedorowicz \etal \cite{MR2458434} proved that $\chiup'_{a}(G)\leq 2\Delta(G)+29$; Hou \etal \cite{MR2491777} proved that $\chiup'_{a}(G)\leq \max\{2\Delta(G)-2, \Delta(G)+22\}$. Recently, Basavaraju \etal \cite{MR2817509} showed that $\chiup_{a}'(G) \leq \Delta(G) + 12$, and Guan \etal \cite{MR3031510} improved it to $\chiup_{a}'(G) \leq \Delta(G) + 10$, and Wang \etal \cite{MR2994403} further improved it to $\chiup_{a}'(G) \leq \Delta(G) + 7$.

In this paper, we improve the upper bound to $\Delta(G) + 6$ by the following theorem.
\begin{theorem}\label{MResult}
If $G$ is a planar graph, then $\chiup'_{a}(G)\leq \Delta(G)+6$.
\end{theorem}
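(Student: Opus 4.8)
The plan is to argue by contradiction through a minimal counterexample. Set $\kappa = \Delta(G) + 6$ and suppose some planar graph admits no acyclic edge coloring with $\kappa$ colors; among all such graphs choose one, $G$, that is $\kappa$-deletion-minimal, so that $G$ itself is not acyclically $\kappa$-edge-colorable while $G - e$ is for every edge $e$. Standard first reductions show $G$ is connected and $\delta(G) \geq 2$. Fixing a plane embedding, I would then pin down the local structure of $G$ through a sequence of reducible configurations, and combine their absence with a discharging argument built on Euler's formula.

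The engine behind every reducibility lemma is a local recoloring estimate. Given an edge $uv$, delete it, take an acyclic $\kappa$-coloring $\phi$ of $G - uv$ supplied by minimality, and count how many colors can legally be assigned to $uv$. A color $\alpha$ is forbidden either because it already appears at $u$ or $v$ (a degree constraint, costing at most $d(u) + d(v) - 2$ colors) or because assigning it would close a bichromatic cycle, which happens precisely when there is an $(\alpha,\beta)$-alternating path between $u$ and $v$ for some second color $\beta$. The first count is elementary; the delicate part is bounding the colors lost to alternating paths, which requires tracking, for each candidate color, the colors at the far endpoints of the edges colored like $\alpha$ at $u$ and at $v$, and showing that most candidates survive. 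When $u$ and $v$ are jointly incident to few edges — as in a configuration containing a $2$-vertex, or an edge joining two low-degree vertices — this estimate leaves a free color, contradicting the non-colorability of $G$; hence such configurations are reducible.

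Having assembled a list of forbidden configurations (no two small vertices adjacent, restrictions on the neighborhoods of $2$- and $3$-vertices, and constraints on small faces carrying light vertices), I would switch to discharging. Assign to each vertex $v$ the charge $d(v) - 4$ and to each face $f$ the charge $d(f) - 4$, so that by Euler's formula the total charge is $-8 < 0$. I would then design rules sending charge from high-degree vertices and large faces toward the low-degree vertices and small faces that carry deficient charge, and verify, using the reducible configurations just established, that every vertex and every face ends with nonnegative charge. The resulting nonnegative total contradicts the strict negativity above, completing the proof.

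The main obstacle I anticipate is precisely the alternating-path bookkeeping in the recoloring lemmas: sharpening the bound from $\Delta + 7$ to $\Delta + 6$ removes one spare color, so the crude count \emph{forbidden $\leq$ degree sum plus number of alternating paths} is no longer slack enough. To close the gap I would need tighter configurations and a finer case analysis, exploiting that two distinct bichromatic cycles through $uv$ cannot share the color $\alpha$ freely, and that a Kempe-type swap along a suitable alternating path can manufacture an available color when the direct count fails. Balancing these recoloring gains against the discharging budget — so that the configurations I can prove reducible are exactly the ones discharging must eliminate — is where the real work lies.
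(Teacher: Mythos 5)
Your outline matches the paper's strategy in broad strokes---minimal counterexample, reducible configurations obtained by deleting an edge and recoloring, then discharging from Euler's formula---but as written it is a plan rather than a proof, and essentially all of the mathematical content is missing. The theorem does not follow from the generic recoloring estimate you describe; it follows from a collection of sharply quantified structural lemmas (for instance: if a $4$-vertex $w_{0}$ has a neighbor $w$ of degree at most $\kappa-\Delta$, then $\sum_{x\in N(w_{0})}\deg(x)\geq 2\kappa+4$, with a stronger bound and an extremal characterization when $ww_{0}$ lies in two triangles; analogous degree-sum bounds for $5$-vertices in light triangles; and a lemma excluding certain $4$-subsets of low-degree neighbors around a $(\kappa-\Delta+4)$-vertex adjacent to a $3$-vertex). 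Each of these requires pages of case analysis on where the common colors of $\mathcal{U}(w)\cap\mathcal{U}(w_{0})$ sit and which critical paths must exist, and the discharging rules (about twenty of them, differentiated by the exact degree triple of each $3$-face) are tuned to exactly these bounds. You acknowledge that ``balancing these recoloring gains against the discharging budget\dots is where the real work lies,'' which is an accurate self-assessment: that balancing \emph{is} the proof, and your proposal does not carry it out.

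Two more concrete gaps in the setup itself. First, you take $G$ to be $\kappa$-deletion-minimal only, but the paper needs the stronger notion of $\kappa$-minimality (minimality under minors with bounded maximum degree) to invoke the lemma that every neighbor of a $3$-vertex is a $(\kappa-\Delta+3)^{+}$-vertex, i.e.\ a $9^{+}$-vertex; this is what makes every $(3,\cdot,\cdot)$-triangle a $(3,9^{+},9^{+})$-face and is indispensable in the face-charge computation. Deletion-minimality alone does not supply this. Second, the paper does not discharge on $G$ directly: it deletes all $2$-vertices to form $G^{*}$, shows $\delta(G^{*})\geq 3$, and discharges on a plane component $H$ of $G^{*}$, using the fact that any vertex whose degree drops in $H$ must still have large degree there. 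Without some such device your discharging would have to handle $2$-vertices and the faces they create, which your charge assignment $d-4$ and unspecified rules do not address. In short, the approach is the right one, but the proposal stops exactly where the theorem begins.
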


\section{Preliminary}
Let $\mathbb{S}$ be a multiset and $x$ be an element in $\mathbb{S}$. The {\em multiplicity} $\mul_{\mathbb{S}}(x)$ is the number of times $x$ appears in $\mathbb{S}$. Let $\mathbb{S}$ and $\mathbb{T}$ be two multisets. The union of $\mathbb{S}$ and $\mathbb{T}$, denoted by $\mathbb{S} \uplus \mathbb{T}$, is a multiset with $\mul_{\mathbb{S} \uplus \mathbb{T}}(x) = \mul_{\mathbb{S}}(x) + \mul_{\mathbb{T}}(x)$. Throughout this paper, every coloring uses colors from $[\kappa] = \{1, 2, \dots, \kappa\}$. 

We use $V(G)$, $E(G)$, $\delta(G)$ and $\Delta(G)$ to denote the vertex set, the edge set, the minimum degree and the maximum degree of a graph $G$, respectively. For a vertex $v\in V(G)$, $N_{G}(v)$ denotes the set of vertices that are adjacent to $v$ in $G$ and $\deg_{G}(v)$ (or simple $\deg(v)$) to denote the degree of $v$ in $G$. When $G$ is a plane graph, we use $F(G)$ to denote its face set and $\deg_{G}(f)$ (or simple $\deg(f)$) to denote the degree of a face $f$ in $G$. A $k$-, $k^{+}$-, $k^{-}$-vertex (resp. face) is a vertex (resp. face) with degree $k$, at least $k$ and at most $k$, respectively. A face $f = v_{1}v_{2} \dots v_{k}$ is a $(\deg(v_{1}), \deg(v_{2}), \dots, \deg(v_{k}))$-face. 

A graph $G$ with maximum degree at most $\kappa$ is {\em $\kappa$-deletion-minimal} if $\chiup_{a}'(G) > \kappa$ and $\chiup_{a}'(H) \leq \kappa$ for every proper subgraph $H$ of $G$. A graph property $\mathcal{P}$ is {\em deletion-closed} if $\mathcal{P}$ is closed under taking subgraphs. Analogously, we can define another type of minimal graphs by taking minors. A graph $G$ with maximum degree at most $\kappa$ is {\em $\kappa$-minimal} if $\chiup_{a}'(G) > \kappa$ and $\chiup_{a}'(H) \leq \kappa$ for every proper minor $H$ with $\Delta(H) \leq \Delta(G)$. Obviously, every proper subgraph of a $\kappa$-minimal graph admits an acyclic edge coloring with at most $\kappa$ colors, and then every $\kappa$-minimal graph is also a $\kappa$-deletion-minimal graph and all the properties of $\kappa$-deletion-minimal graphs are also true for $\kappa$-minimal graphs.

Let $G$ be a graph and $H$ be a subgraph of $G$. An acyclic edge coloring of $H$ is a {\em partial acyclic edge coloring} of $G$. Let $\mathcal{U}_{\phi}(v)$ denote the set of colors which are assigned to the edges incident with $v$ with respect to $\phi$. Let $C_{\phi}(v) = [\kappa] \setminus \mathcal{U}_{\phi}(v)$ and $C_{\phi}(uv) = [\kappa] \setminus (\mathcal{U}_{\phi}(u) \cup \mathcal{U}_{\phi}(v))$. Let $\Upsilon_{\phi}(uv) = \mathcal{U}_{\phi}(v) \setminus \{\phi(uv)\}$ and $W_{\phi}(uv) = \{\,u_{i} \mid uu_{i} \in E(G) \mbox{ and } \phi(uu_{i}) \in \Upsilon_{\phi}(uv)\,\}$. Notice that $W_{\phi}(uv)$ may be not same with $W_{\phi}(vu)$. For simplicity, we will omit the subscripts if no confusion can arise. 

An $(\alpha, \beta)$-maximal dichromatic path with respect to $\phi$ is a maximal path whose edges are colored by $\alpha$ and $\beta$ alternately. An $(\alpha, \beta, u, v)$-critical path with respect to $\phi$ is an $(\alpha, \beta)$-maximal dichromatic path which starts at $u$ with color $\alpha$ and ends at $v$ with color $\alpha$. An $(\alpha, \beta, u, v)$-alternating path with respect to $\phi$ is an $(\alpha, \beta)$-dichromatic path starting at $u$ with color $\alpha$ and ending at $v$ with color $\beta$.

Let $\phi$ be a partial acyclic edge coloring of $G$. A color $\alpha$ is {\em candidate} for an edge $e$ in $G$ with respect to a partial edge coloring of
$G$ if none of the adjacent edges of $e$ is colored with $\alpha$.  A candidate color $\alpha$ is {\em valid} for an edge $e$ if assigning the color $\alpha$ to $e$ does not result in any dichromatic cycle in $G$.

\begin{fact}[Basavaraju \etal \cite{MR2817509}]\label{Fact1}%
Given a partial acyclic edge coloring of $G$ and two colors $\alpha, \beta$, there exists at most one $(\alpha, \beta)$-maximal dichromatic path containing a particular vertex $v$. \qed
\end{fact}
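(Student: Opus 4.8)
The plan is to analyze the structure of the subgraph spanned by the edges colored $\alpha$ or $\beta$, and show it is so constrained that each vertex meets a unique maximal dichromatic path. Write $H$ for the spanning subgraph of $G$ whose edge set consists of exactly those edges colored $\alpha$ or $\beta$ under $\phi$. By definition every $(\alpha,\beta)$-maximal dichromatic path is a subgraph of $H$, so it suffices to understand the components of $H$.

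First I would record the two facts about $H$ that come for free from the hypotheses. Since $\phi$ is a proper edge coloring, at every vertex $w$ at most one incident edge receives color $\alpha$ and at most one receives color $\beta$; hence $\deg_{H}(w) \leq 2$ for all $w$. Since $\phi$ is a partial acyclic edge coloring, $G$ contains no dichromatic cycle, so in particular $H$ --- whose edges use only the two colors $\alpha,\beta$ --- contains no cycle at all; that is, $H$ is a forest.

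Combining these, $H$ is a forest of maximum degree at most $2$, hence a disjoint union of simple paths. This is precisely the step where the acyclicity hypothesis is indispensable: dropping it would allow $H$ to contain an even cycle, through each vertex of which several distinct maximal simple $(\alpha,\beta)$-paths would pass, and the statement would fail. Along any such path component consecutive edges automatically alternate between $\alpha$ and $\beta$, since at each internal (degree-$2$) vertex one incident edge is colored $\alpha$ and the other $\beta$; thus every path component is genuinely a dichromatic path, and it is maximal, as its two endpoints have degree $1$ in $H$ and admit no extending edge of color $\alpha$ or $\beta$.

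Finally I would conclude by uniqueness of the component. A fixed vertex $v$ lies in exactly one component $P$ of $H$. Any $(\alpha,\beta)$-dichromatic path through $v$ is a connected subgraph of $H$ containing $v$, hence lies inside $P$, and the only maximal such path is $P$ itself. Therefore $v$ is contained in at most one $(\alpha,\beta)$-maximal dichromatic path, as claimed. I expect no serious obstacle here: the content is entirely structural, and the only point requiring genuine care is to invoke \emph{both} properness (for the degree bound) and acyclicity (to exclude cyclic components), rather than properness alone.
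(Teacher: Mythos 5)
Your proof is correct and coincides with the standard argument for this fact, which the paper itself does not prove but imports from Basavaraju \etal\ \cite{MR2817509}: there, too, one observes that the subgraph of edges colored $\alpha$ or $\beta$ has maximum degree at most $2$ by properness and contains no (necessarily alternating, hence dichromatic) cycle by acyclicity, so its components are alternating paths and each vertex lies in at most one maximal $(\alpha,\beta)$-path. Nothing further is needed.
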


\begin{fact}[Basavaraju \etal \cite{MR2817509}]\label{Fact2}%
Let $G$ be a $\kappa$-deletion-minimal graph and $uv$ be an edge of $G$. If $\phi$ is an acyclic edge coloring of $G - uv$, then no candidate color for $uv$ is valid. Furthermore, if $\mathcal{U}(u) \cap \mathcal{U}(v) = \emptyset$, then $\deg(u) + \deg(v) = \kappa + 2$; if $|\mathcal{U}(u) \cap \mathcal{U}(v)| = s$, then $\deg(u) + \deg(v) + \sum\limits_{w \in W(uv)} \deg(w) \geq \kappa+2s+2$. \qed
\end{fact}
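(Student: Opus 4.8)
The plan is to establish the three assertions in sequence, deriving the two degree relations from the non-validity statement through a counting argument over the candidate colors of $uv$. The first assertion is immediate from minimality: since $G$ is $\kappa$-deletion-minimal and $G - uv$ is a proper subgraph, an acyclic edge coloring $\phi$ of $G-uv$ with at most $\kappa$ colors exists. If some candidate color $\alpha$ for $uv$ were valid, then assigning $\alpha$ to $uv$ would extend $\phi$ to a proper edge coloring of $G$ with no dichromatic cycle, i.e., an acyclic edge coloring of $G$ using at most $\kappa$ colors, contradicting $\chiup_{a}'(G) > \kappa$. Hence no candidate color is valid.

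The heart of the argument is to read off, from each failure of validity, a critical path through a shared color. Write $a = \deg(u)$ and $b = \deg(v)$. Properness of $\phi$ on $G - uv$ gives $|\mathcal{U}(u)| = a - 1$ and $|\mathcal{U}(v)| = b - 1$, so when $|\mathcal{U}(u) \cap \mathcal{U}(v)| = s$ the candidate set $C(uv) = [\kappa] \setminus (\mathcal{U}(u) \cup \mathcal{U}(v))$ has size $\kappa - (a + b - 2 - s)$. Fix a candidate color $\alpha$. By the first assertion, coloring $uv$ with $\alpha$ produces a dichromatic cycle; since $\phi$ is acyclic on $G - uv$, this cycle must pass through $uv$, and since $\alpha$ occurs at neither $u$ nor $v$, the edge $uv$ is the unique $\alpha$-edge at each of $u$ and $v$. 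Thus the cycle is an $(\alpha, \beta)$-cycle whose edges at $u$ and at $v$ other than $uv$ share a common color $\beta$, which forces $\beta \in \mathcal{U}(u) \cap \mathcal{U}(v)$. Deleting $uv$ leaves a maximal $(\beta, \alpha)$-dichromatic path from $u$ to $v$ beginning and ending in $\beta$, that is, a $(\beta, \alpha, u, v)$-critical path; maximality at $u$ and $v$ holds precisely because $\alpha$ is absent there. So every candidate color is blocked by some $\beta \in \mathcal{U}(u) \cap \mathcal{U}(v)$.

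Next I would bound how many candidate colors a single blocking color can account for. Each $\beta \in \mathcal{U}(u) \cap \mathcal{U}(v)$ is the color of a unique edge $uw$ at $u$, with $w$ ranging over $W(uv)$, and since distinct shared colors give distinct neighbours, $|W(uv)| = s$. The critical path witnessing a block by $\beta$ starts $u, w, \dots$ with the $\beta$-edge $uw$ followed by an $\alpha$-colored edge at $w$; distinct candidate colors $\alpha$ blocked by the same $\beta$ yield distinct $\alpha$-edges at $w$, none of which is $uw$ (the unique $\beta$-edge at $w$). By Fact~\ref{Fact1}, for fixed $\beta$ and $\alpha$ the relevant maximal dichromatic path through $w$ is unique, so the assignment $\alpha \mapsto (\alpha\text{-edge at }w)$ is well defined and injective, giving at most $\deg(w) - 1$ candidate colors blocked by $\beta$. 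Summing over the $s$ blocking colors and using that every candidate color is blocked by at least one of them yields $|C(uv)| \leq \sum_{w \in W(uv)} (\deg(w) - 1) = \sum_{w \in W(uv)} \deg(w) - s$. Substituting $|C(uv)| = \kappa - a - b + 2 + s$ and rearranging gives $a + b + \sum_{w \in W(uv)} \deg(w) \geq \kappa + 2s + 2$, which is the third assertion. The second assertion is the case $s = 0$: then $W(uv) = \emptyset$ and no candidate color can be blocked, so the first assertion forces $C(uv) = \emptyset$, whence $a + b - 2 = \kappa$, i.e. $\deg(u) + \deg(v) = \kappa + 2$.

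I expect the main obstacle to be the structural step of the second paragraph, namely verifying that non-validity of a candidate color yields \emph{exactly} a critical path whose first internal vertex $w$ lies in $W(uv)$ and whose second edge carries the candidate color. Pinning down that the blocking color must lie in the intersection $\mathcal{U}(u) \cap \mathcal{U}(v)$, and that Fact~\ref{Fact1} makes the edge-label map injective, is what legitimizes the per-neighbour bound $\deg(w) - 1$; once this is secured, the counting and the algebra are routine.
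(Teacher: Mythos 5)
Your proof is correct and is essentially the standard argument: the paper itself states this fact without proof (citing Basavaraju \etal \cite{MR2817509}), and your route---extracting a $(\beta,\alpha,u,v)$-critical path with $\beta\in\mathcal{U}(u)\cap\mathcal{U}(v)$ from each non-valid candidate color, then bounding the candidates blocked through each $w\in W(uv)$ by $\deg(w)-1$ and counting $|C(uv)|=\kappa-\deg(u)-\deg(v)+2+s$---is exactly the known one. (Minor remark: properness of $\phi$ at $w$ already makes the map $\alpha\mapsto(\alpha\text{-edge at }w)$ injective, so your appeal to Fact~\ref{Fact1} there is harmless but unnecessary.)
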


We remind the readers that we will use these two facts frequently, so please keep these in mind and we will not refer it at every time. 

\section{Structural lemmas}
Wang and Zhang \cite{MR3166127} presented many structural results on $\kappa$-deletion-minimal graphs and $\kappa$-minimal graphs. In this section, we give more structural lemmas in order to prove our main result.
\begin{lemma}\label{delta2}
If $G$ is a $\kappa$-deletion-minimal graph, then $G$ is $2$-connected and $\delta(G) \geq 2$.
\end{lemma}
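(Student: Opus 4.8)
The plan is to prove both conclusions by the standard deletion-minimality template: assume $G$ contains the forbidden feature, invoke minimality to acyclically edge-color the relevant proper subgraph(s) with colors from $[\kappa]$, and then patch these into an acyclic edge coloring of all of $G$ with at most $\kappa$ colors, contradicting $\chiup'_{a}(G) > \kappa$. Throughout I will use that $\deg_{G}(v) \le \Delta(G) \le \kappa$ for every vertex $v$, which is part of the definition of a $\kappa$-deletion-minimal graph.

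For $\delta(G) \ge 2$, first note $G$ has no isolated vertex, since deleting one changes neither the edge set nor the acyclic chromatic index. So suppose $v$ is a $1$-vertex with unique neighbor $u$. By minimality $G - v$ has an acyclic edge coloring $\phi$ with colors from $[\kappa]$. Since $|\mathcal{U}_{\phi}(u)| \le \deg_{G}(u) - 1 \le \kappa - 1$, the set $C_{\phi}(u)$ is nonempty; color $uv$ with any color of $C_{\phi}(u)$. The extension is proper, and it creates no bichromatic cycle because any cycle through the new edge $uv$ would have to visit the $1$-vertex $v$ twice. Thus $\phi$ extends to $G$, a contradiction, and hence $\delta(G) \ge 2$.

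For $2$-connectedness I would handle connectivity and the absence of a cut vertex in turn. If $G$ were disconnected, then coloring each component (a proper subgraph) with the common palette $[\kappa]$ already gives an acyclic edge coloring of $G$, since every cycle lies inside a single component; hence $G$ is connected. Now suppose $G$ is connected but has a cut vertex $v$, and write $G = G_{1} \cup G_{2}$ with $V(G_{1}) \cap V(G_{2}) = \{v\}$, where both $G_{i}$ are proper subgraphs. The crucial structural point is that, since $v$ is a cut vertex, every cycle of $G$ lies entirely in $G_{1}$ or entirely in $G_{2}$ (a cycle meeting both $G_{1} - v$ and $G_{2} - v$ would yield a $v$-avoiding path between them). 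By minimality pick acyclic edge colorings $\phi_{i}$ of $G_{i}$ from $[\kappa]$. Setting $d_{i} = \deg_{G_{i}}(v)$, we have $d_{1} + d_{2} = \deg_{G}(v) \le \kappa$, so after applying a color permutation to $\phi_{2}$ we may assume the $d_{2}$ colors it uses at $v$ avoid the $d_{1}$ colors $\phi_{1}$ uses at $v$. The combined coloring is then proper, since the only possible clash was at $v$, and it is acyclic, since any bichromatic cycle would be confined to some $G_{i}$ and already forbidden by $\phi_{i}$. This contradiction shows $G$ has no cut vertex, so $G$ is $2$-connected.

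The routine part is the $1$-vertex extension, which works the moment one observes that a $1$-vertex cannot lie on a cycle. I expect the main obstacle to be the cut-vertex case, specifically the two points that make the patching legitimate: justifying that every cycle is trapped on one side of $v$, so that acyclicity of the pieces transfers to the whole, and verifying that the budget $d_{1} + d_{2} \le \kappa$ is exactly what permits a color permutation of $\phi_{2}$ realizing disjoint palettes at $v$ while preserving both properness and acyclicity.
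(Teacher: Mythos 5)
Your proof is correct. The paper states this lemma without giving any proof (it is the standard minimality argument, implicit in the cited literature), and your argument — extending a coloring of $G-v$ over a pendant edge using a color free at $u$, and splicing colorings of the two sides of a cut vertex after permuting one palette to make the color sets at $v$ disjoint, which is possible since $\deg_G(v)\leq\Delta(G)\leq\kappa$ — is exactly the expected one and is complete.
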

\subsection{Local structure on the $2$- or $3$-vertices}

\begin{lemma}[Wang and Zhang \cite{MR3166127}]\label{2InTriangle}%
Let $G$ be a $\kappa$-minimal graph with $\kappa \geq \Delta(G) + 1$. If $v_{0}$ is a $2$-vertex of $G$, then $v_{0}$ is contained in a triangle.
\end{lemma}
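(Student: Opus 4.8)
The plan is to argue by contradiction, exploiting that $G$ is $\kappa$-\emph{minimal} (not merely deletion-minimal) so that we may pass to a contracted minor. Suppose $v_{0}$ is a $2$-vertex with neighbours $u$ and $w$ but $uw \notin E(G)$; since $G$ is simple, $u \neq w$. I would form the graph $H$ obtained from $G$ by contracting the edge $uv_{0}$ (equivalently, deleting $v_{0}$ and adding the edge $uw$), and write $\bar{u}$ for the contracted vertex. Because $uw \notin E(G)$ no parallel edge is created, so $H$ is a simple proper minor of $G$. The vertex $\bar{u}$ is adjacent to $(N_{G}(u)\setminus\{v_{0}\})\cup\{w\}$, hence $\deg_{H}(\bar{u}) = (\deg_{G}(u)-1)+1 = \deg_{G}(u) \leq \Delta(G)$; the vertex $w$ merely trades the neighbour $v_{0}$ for $\bar{u}$, and all other degrees are untouched, so $\Delta(H) \leq \Delta(G)$. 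By $\kappa$-minimality, $H$ admits an acyclic edge coloring $\phi$ with $\kappa$ colors.

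Next I would transfer $\phi$ back to $G$. Every edge of $G$ other than $uv_{0}$ corresponds to an edge of $H$, with $v_{0}w$ corresponding to $\bar{u}w$, so setting $\psi(e)=\phi(\text{image of }e)$ colors $G-uv_{0}$; put $c=\phi(\bar{u}w)$, the color that $v_{0}w$ receives. This $\psi$ is a proper acyclic edge coloring of $G-uv_{0}$: properness at $u$ and $w$ is inherited from properness of $\phi$ at $\bar{u}$ and $w$, and since $v_{0}$ has degree $1$ in $G-uv_{0}$ it lies on no cycle, so every cycle of $G-uv_{0}$ maps to a cycle of $H$ and therefore is not bichromatic. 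It then remains only to color the single edge $uv_{0}$. The colors forbidden for $uv_{0}$ are those on the $\deg_{G}(u)-1$ other edges at $u$ together with $c$ (the color at $v_{0}$), at most $\deg_{G}(u) \leq \Delta(G) < \kappa$ in total, so at least one candidate color $\gamma$ exists, and note $\gamma \neq c$.

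The decisive step, which I expect to be the crux, is to show that every such candidate $\gamma$ is in fact \emph{valid}. Assigning $\gamma$ to $uv_{0}$ can create a bichromatic cycle only through $v_{0}$, and any cycle through $v_{0}$ uses both edges at $v_{0}$, so it is a $(\gamma,c)$-cycle; deleting $v_{0}$ from it leaves a $(c,\gamma)$-path $P$ from $u$ to $w$ inside $G-v_{0}$ that starts at $u$ with an edge of color $c$. Since $uw \notin E(G)$, this first edge $ux_{1}$ has $x_{1}\neq w$. But $P$ lives in $G-v_{0}$, so $ux_{1}$ corresponds to the edge $\bar{u}x_{1}$ of $H$, which has color $c$; as $\bar{u}w$ also has color $c$ under $\phi$, the vertex $\bar{u}$ would carry two distinct edges of color $c$, contradicting the properness of $\phi$. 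Hence no such cycle exists, $\gamma$ is valid, and coloring $uv_{0}$ with $\gamma$ extends $\psi$ to an acyclic edge coloring of $G$ with $\kappa$ colors, contradicting $\chiup'_{a}(G) > \kappa$. Therefore $uw \in E(G)$ and $v_{0}$ lies on the triangle $uv_{0}w$. The essential idea is the choice to \emph{contract} $uv_{0}$ rather than delete it: the contraction places the color $c$ on $\bar{u}w$, and it is precisely this that automatically blocks the bichromatic path that would otherwise threaten validity.
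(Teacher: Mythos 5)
Your proof is correct. The paper states this lemma without proof, importing it from Wang and Zhang \cite{MR3166127}; your argument---contracting $uv_{0}$ to obtain a proper minor $H$ with $\Delta(H)\leq\Delta(G)$, lifting the coloring back so that the color $c=\phi(\bar{u}w)$ already occupies $v_{0}w$ and, by properness of $\phi$ at $\bar{u}$, forbids any edge of color $c$ at $u$ (hence any $(\gamma,c)$-cycle through $uv_{0}$), and then extending to $uv_{0}$---is exactly the standard argument that the definition of $\kappa$-minimality is designed to enable, and it matches the approach of the cited source.
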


\begin{lemma}[Wang and Zhang \cite{MR3166127}]\label{2+edge}%
Let $G$ be a $\kappa$-deletion-minimal graph. If $v$ is adjacent to a $2$-vertex $v_{0}$ and $N_{G}(v_{0}) = \{w, v\}$, then $v$ is adjacent to at least $\kappa - \deg(w) + 1$ vertices with degree at least $\kappa - \deg(v) + 2$. Moreover,
\begin{enumerate}[label=(\Alph*)]%
\item\label{A} if $\kappa \geq \deg(v) + 1$ and $wv \in E(G)$, then $v$ is adjacent to at least $\kappa - \deg(w) + 2$ vertices with degree at least $\kappa - \deg(v) + 2$, and $\deg(v) \geq \kappa - \deg(w) + 3$;
\item\label{B} if $\kappa \geq \Delta(G) + 2$ and $v$ is adjacent to precisely $\kappa - \Delta(G) + 1$ vertices with degree at least $\kappa - \Delta(G) + 2$, then $v$ is adjacent to at most $\deg(v) + \Delta(G) - \kappa - 3$ vertices with degree two and $\deg(v) \geq \kappa - \Delta(G) + 4$.
\end{enumerate}
\end{lemma}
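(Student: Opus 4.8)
The plan is to delete the edge $e_{0}=vv_{0}$, take an acyclic edge coloring $\phi$ of $G-e_{0}$ supplied by minimality (colors from $[\kappa]$), and extract structure from the fact that, since $\chiup'_{a}(G)>\kappa$, no such $\phi$ extends to $G$. Write $\gamma=\phi(v_{0}w)$ and $d=\deg(v)$, so $|\mathcal{U}(v)|=d-1$ and the candidate colors for $e_{0}$ are exactly $[\kappa]\setminus(\mathcal{U}(v)\cup\{\gamma\})$. Since $v_{0}$ becomes a $1$-vertex in $G-e_{0}$, any dichromatic cycle created by coloring $e_{0}$ with a candidate $\alpha$ must be an $(\alpha,\gamma)$-cycle through $v_{0}$, which can exist only if $\gamma\in\mathcal{U}(v)$. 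First I would argue that $\gamma\in\mathcal{U}(v)$: otherwise, by Fact~\ref{Fact2} no candidate is valid, yet an $(\alpha,\gamma)$-cycle is impossible, so there can be no candidate at all, which forces $\deg(v)=\kappa$; in that degenerate case the target degree $\kappa-\deg(v)+2=2$ is met by every neighbour (as $\delta(G)\ge 2$ by Lemma~\ref{delta2}) and the bound is trivial. Hence the candidate set is precisely $C:=[\kappa]\setminus\mathcal{U}(v)$ with $|C|=\kappa-d+1$, and for every $\alpha\in C$ there is an $(\alpha,\gamma)$-dichromatic path from $v$ to $w$ leaving $v$ along its unique $\gamma$-edge $vv_{1}$; its second edge, at $v_{1}$, is coloured $\alpha$, so $\alpha\in\mathcal{U}(v_{1})$.

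The key device is to recolour the pendant edge $v_{0}w$. Because $v_{0}w$ lies on no cycle of $G-e_{0}$, it may be recoloured to any $\gamma''\notin\mathcal{U}(w)\setminus\{\gamma\}$ without destroying acyclicity, and the new coloring $\phi''$ still fails to extend to $G$. Applying the previous paragraph to $\phi''$ shows, first, that $\gamma''\in\mathcal{U}(v)$ for every admissible $\gamma''$, that is $\mathcal{U}(v)\cup\mathcal{U}(w)=[\kappa]$; and second, that the unique $\gamma''$-edge $vv_{i}$ at $v$ satisfies $C\subseteq\mathcal{U}(v_{i})$, whence $\deg(v_{i})\ge|C|+1=\kappa-d+2$ since $\gamma''\in\mathcal{U}(v_{i})\setminus C$. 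Distinct colors $\gamma''$ give distinct neighbours $v_{i}$, and the admissible colors form exactly $([\kappa]\setminus\mathcal{U}(w))\cup\{\gamma\}$, of cardinality $\kappa-\deg(w)+1$. This produces $\kappa-\deg(w)+1$ neighbours of $v$ of degree at least $\kappa-\deg(v)+2$, the main assertion.

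For part~\ref{A}, the hypothesis $\kappa\ge\deg(v)+1$ rules out the degenerate case, and $vw\in E(G)$ supplies one more high-degree neighbour, namely $w$ itself. Writing $\eta=\phi(vw)$ we have $\gamma,\eta\in\mathcal{U}(v)\cap\mathcal{U}(w)$ with $\gamma\neq\eta$, so $|\mathcal{U}(v)\cap\mathcal{U}(w)|\ge 2$; together with $\mathcal{U}(v)\cup\mathcal{U}(w)=[\kappa]$ this gives $\deg(w)=|\mathcal{U}(v)\cap\mathcal{U}(w)|+\kappa-|\mathcal{U}(v)|\ge\kappa-\deg(v)+3$. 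As $\eta\in\mathcal{U}(w)\setminus\{\gamma\}$, the neighbour $w$ is not one of the $\gamma''$-neighbours counted above, yielding $\kappa-\deg(w)+2$ neighbours of degree at least $\kappa-\deg(v)+2$. Finally $v_{0}$ is a further neighbour of $v$ of degree $2<\kappa-\deg(v)+2$, so $\deg(v)\ge\kappa-\deg(w)+3$.

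Part~\ref{B} is where the real work lies. The condition $\kappa\ge\Delta(G)+2$ removes the degenerate case, and since each of the $\kappa-\deg(w)+1$ neighbours found above has degree at least $\kappa-\deg(v)+2\ge\kappa-\Delta(G)+2$, the assumption that $v$ has \emph{precisely} $\kappa-\Delta(G)+1$ neighbours of degree at least $\kappa-\Delta(G)+2$ forces $\deg(w)=\Delta(G)$ and identifies those neighbours as exactly the $\gamma''$-neighbours. Every remaining neighbour then has its edge to $v$ coloured in $(\mathcal{U}(v)\cap\mathcal{U}(w))\setminus\{\gamma\}$ and degree at most $\kappa-\Delta(G)+1$; there are $\Delta(G)+\deg(v)-\kappa-2$ of them, and with $v_{0}$ the crude count gives at most $\Delta(G)+\deg(v)-\kappa-1$ two-vertices. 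Thus both assertions of~\ref{B} reduce to showing that at least two of these low-degree neighbours are \emph{not} $2$-vertices. This is the main obstacle, and it is genuinely harder because no critical path forces such a neighbour to be large; I expect to argue by contradiction, assuming almost all low-degree neighbours are $2$-vertices and using them to free their colors at $v$ (recolouring each offending edge $vv_{0}'$ by a color of $C$) so as to create new candidates for $e_{0}$. The impossibility of colouring $e_{0}$ under all these modifications should over-constrain the neighbourhood and produce a valid coloring of $G$, contradicting $\chiup'_{a}(G)>\kappa$; the delicate point will be to control the interaction between the freed colors and the dichromatic paths so that the recolourings can be performed simultaneously.
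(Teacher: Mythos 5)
Your argument for the main assertion and for part~\ref{A} is correct and complete: the observation that $v_{0}w$ is a pendant edge of $G-vv_{0}$, so that it can be freely recoloured with any of the $\kappa-\deg(w)+1$ colours absent from $\mathcal{U}(w)\setminus\{\gamma\}$ without disturbing acyclicity, and that each such recolouring forces the new colour into $\mathcal{U}(v)$ and forces $C(vv_{0})$ into the palette of the corresponding neighbour, is exactly the right mechanism, and your bookkeeping for (A) (the two common colours $\gamma,\eta$, the identity $\mathcal{U}(v)\cup\mathcal{U}(w)=[\kappa]$, and the fact that $w$ and $v_{0}$ are not among the counted neighbours) is sound. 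Note that the paper itself does not prove this lemma --- it imports it from Wang and Zhang \cite{MR3166127} --- so there is no in-paper proof to measure you against; the assessment below is of your proposal on its own terms.

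The genuine gap is part~\ref{B}. You correctly reduce both of its conclusions to the single claim that at least two neighbours of $v$ have degree in the range $\{3,\dots,\kappa-\Delta(G)+1\}$, but you do not prove that claim; you only state that you ``expect to argue by contradiction'' and that the recolourings ``should over-constrain the neighbourhood.'' This is precisely the part of the lemma that does not follow from the pendant-edge device alone, because a $2$-vertex neighbour $v_{0}'$ of $v$ contributes nothing through critical paths: its second edge $v_{0}'x$ can kill at most one candidate colour when you recolour $vv_{0}'$, and one must track how freeing the colour $\phi(vv_{0}')$ at $v$ creates a new candidate for $vv_{0}$, which in turn forces a $(\phi(vv_{0}'),\gamma)$-critical path through $w$, and then iterate this over all the supposed $2$-vertex neighbours simultaneously while controlling the interaction with the $(\alpha,\gamma'')$-paths already used. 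None of this is carried out, and the ``delicate point'' you flag (performing the recolourings simultaneously without one destroying the critical path needed by another) is exactly where the difficulty lies. As it stands, part~\ref{B} --- including the conclusion $\deg(v)\geq\kappa-\Delta(G)+4$, which you never explicitly derive --- is unproved.
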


\begin{lemma}[Wang and Zhang \cite{MR3166127}]\label{2++edge}%
Let $G$ be a $\kappa$-deletion-minimal graph with $\kappa \geq \Delta(G) + 2$. If $v_{0}$ is a $2$-vertex, then every neighbor of $v_{0}$ has degree at least $\kappa - \Delta(G) + 4$.
\end{lemma}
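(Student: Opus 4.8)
The plan is to fix the $2$-vertex $v_{0}$ with $N_{G}(v_{0}) = \{v, w\}$ and prove $\deg(v) \geq \kappa - \Delta(G) + 4$; the bound for $w$ then follows by exchanging the roles of $v$ and $w$. Write $\Delta = \Delta(G)$ and suppose toward a contradiction that $\deg(v) \leq \kappa - \Delta + 3$. This single assumption does double duty: when $\kappa \leq 2\Delta - 4$ it is exactly the negation of the goal, while when $\kappa \geq 2\Delta - 3$ every vertex already satisfies $\deg \leq \Delta \leq \kappa - \Delta + 3$, so a contradiction here shows that $G$ has no $2$-vertex at all, which is what the (then unattainable) bound demands. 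The first assertion of Lemma~\ref{2+edge} gives that $v$ is adjacent to at least $\kappa - \deg(w) + 1 \geq \kappa - \Delta + 1$ vertices of degree at least $\kappa - \deg(v) + 2$; as any such vertex has degree at most $\Delta$, we get $\kappa - \deg(v) + 2 \leq \Delta$, hence $\deg(v) \geq \kappa - \Delta + 2$. Thus only $\deg(v) \in \{\kappa - \Delta + 2, \kappa - \Delta + 3\}$ remain.

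Let $H$ be the number of neighbors of $v$ of degree at least $\kappa - \Delta + 2$. Since $\kappa - \Delta + 2 \geq 4 > 2$, the high-degree neighbors are disjoint from the $2$-neighbors, and $v_{0}$ is one of the latter, so $H \leq \deg(v) - 1$. The main idea is to force $H = \kappa - \Delta + 1$ and then apply part~\ref{B} of Lemma~\ref{2+edge}, whose hypothesis is precisely this equality and whose conclusion $\deg(v) \geq \kappa - \Delta + 4$ contradicts our assumption. If $\deg(v) = \kappa - \Delta + 2$, the vertices supplied above have degree at least $\kappa - \deg(v) + 2 = \Delta \geq \kappa - \Delta + 2$, so they count toward $H$; then $\kappa - \deg(w) + 1 \leq H \leq \kappa - \Delta + 1$ forces $\deg(w) = \Delta$ and $H = \kappa - \Delta + 1$, and part~\ref{B} closes the case. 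If $\deg(v) = \kappa - \Delta + 3$ (which requires $\kappa \leq 2\Delta - 3$, so $\kappa - \deg(v) + 2 = \Delta - 1 \geq \kappa - \Delta + 2$), those vertices again count toward $H$ and $\kappa - \Delta + 1 \leq H \leq \kappa - \Delta + 2$; the value $H = \kappa - \Delta + 1$ is dispatched by part~\ref{B}, and if in addition $vw \in E(G)$ then part~\ref{A} applied to $v$ gives $\deg(v) \geq \kappa - \deg(w) + 3$, which for $\deg(w) \leq \Delta - 1$ already yields the contradiction $\deg(v) \geq \kappa - \Delta + 4$.

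This leaves one genuinely tight configuration: $\deg(v) = \kappa - \Delta + 3$, $H = \kappa - \Delta + 2$ (so $v_{0}$ is the only $2$-neighbor of $v$ and every other neighbor has degree at least $\kappa - \Delta + 2$), and $vw \notin E(G)$. Here I would argue by recoloring. Take an acyclic edge coloring $\phi$ of $G - vv_{0}$, which exists by minimality, and set $\beta = \phi(v_{0}w)$. There are at least $\kappa - \deg(v)$ candidate colors for $vv_{0}$, and by Fact~\ref{Fact2} none is valid; since $\deg(v_{0}) = 2$, coloring $vv_{0}$ with such a color $\alpha$ can only create an $(\alpha, \beta)$-dichromatic cycle through $v_{0}$, which by Fact~\ref{Fact1} is the unique $(\alpha, \beta)$-maximal dichromatic path through $v$ and must run from $v$ to $w$. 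Hence every candidate color lies in $\mathcal{U}_{\phi}(w)$ and $\beta \in \mathcal{U}_{\phi}(v)$. The plan is then to liberate one candidate color by recoloring a suitably chosen edge $vu_{i}$ with $\phi(vu_{i}) \neq \beta$, assigning it a candidate color and moving $\phi(vu_{i})$ onto $vv_{0}$, and to verify through Facts~\ref{Fact1} and~\ref{Fact2} that the only bichromatic cycles that could appear are blocked, producing an acyclic edge coloring of $G$ with $\kappa$ colors and contradicting $\chiup_{a}'(G) > \kappa$.

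I expect this final recoloring to be the main obstacle. Every neighbor $u_{i}$ is forced to have high degree, so the colors available for the swap are scarce, and each candidate color $\alpha$ comes with its own $(\alpha, \beta)$-critical path ending at $w$; the difficulty is to choose the edge and the new color so that no swapped path recombines into a dichromatic cycle through $w$ or through another $u_{j}$. By contrast, the counting reductions and the appeals to parts~\ref{A} and~\ref{B} are routine, so the real content is concentrated in controlling the interaction of these critical paths in the tight case.
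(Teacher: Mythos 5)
This lemma is imported from Wang and Zhang \cite{MR3166127}; the present paper contains no proof of it, so there is nothing internal to compare against and your argument has to stand on its own. As written it does not close. The counting phase is essentially sound: the first assertion of \autoref{2+edge}, together with $\deg(v)\leq\Delta$, correctly forces $\deg(v)\in\{\kappa-\Delta+2,\ \kappa-\Delta+3\}$, and the subcases with $H=\kappa-\Delta+1$ are legitimately killed by part~\ref{B}. But the residual configuration is exactly where the lemma's content lives, and there you only describe a plan (``liberate one candidate color by recoloring a suitably chosen edge $vu_{i}$'') and then concede that controlling the interaction of the $(\alpha,\beta)$-critical paths is ``the main obstacle.'' A proof must actually exhibit the edge and the replacement color and verify, via Facts~\ref{Fact1} and~\ref{Fact2} and the forced degree data at $w$ and the $u_{j}$, that no dichromatic cycle survives the swap; this is the step that makes or breaks the lemma, and it is missing.

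There is also a slip in the case analysis that shrinks your ``one genuinely tight configuration'' illegitimately. In the subcase $\deg(v)=\kappa-\Delta+3$, $H=\kappa-\Delta+2$, you dispatch $vw\in E(G)$ by part~\ref{A}, but that part only yields $\deg(v)\geq\kappa-\deg(w)+3$, which is no contradiction when $\deg(w)=\Delta$: it gives $\deg(v)\geq\kappa-\Delta+3$, precisely your standing value. So at least two configurations survive the counting reductions --- $vw\notin E(G)$, and $vw\in E(G)$ with $\deg(w)=\Delta$ --- and both would need the recoloring argument you have not supplied. Until that argument is written out (or the lemma is simply cited, as this paper does), the proposal is a reduction plus a research plan, not a proof.
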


\begin{lemma}[Hou \etal \cite{MR2849391}]\label{3+vertex}%
Let $G$ be a $\kappa$-deletion-minimal graph with $\kappa \geq \Delta(G) + 2$. If $v$ is a $3$-vertex, then every neighbor of $v$ is a $(\kappa - \Delta(G) + 2)^{+}$-vertex.
\end{lemma}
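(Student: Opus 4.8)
The plan is to argue by contradiction. Suppose the $3$-vertex $v$ has a neighbor $u$ with $\deg(u) \le \kappa - \Delta(G) + 1$, and write $N(v) = \{u, x, y\}$. Since $G$ is $\kappa$-deletion-minimal, $G - uv$ admits an acyclic edge coloring $\phi$ with colors in $[\kappa]$. Set $\alpha = \phi(vx)$, $\beta = \phi(vy)$ (so $\mathcal{U}(v) = \{\alpha,\beta\}$ with $\alpha \neq \beta$) and $s = |\mathcal{U}(u) \cap \mathcal{U}(v)| \in \{0,1,2\}$. First I would dispose of $s = 0$: Fact~\ref{Fact2} then forces $\deg(u) + \deg(v) = \kappa + 2$, that is $\deg(u) = \kappa - 1 \ge \Delta(G) + 1$, contradicting $\deg(u) \le \Delta(G)$. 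Hence $s \ge 1$.

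The heart of the argument is to extract structure from the invalidity of every candidate color. Since $|\mathcal{U}(u)| = \deg(u) - 1$ is small, the number of candidate colors for $uv$ is $\kappa - |\mathcal{U}(u)\cup\mathcal{U}(v)| = \kappa - \deg(u) + s - 1 \ge \Delta(G) + s - 2$. By Fact~\ref{Fact2} none is valid, so for each candidate $\gamma$, assigning $\gamma$ to $uv$ closes a dichromatic cycle; this cycle must use a second color $\delta \in \{\alpha,\beta\} \cap \mathcal{U}(u)$ (the color repeated at both $u$ and $v$), and deleting $uv$ leaves a $(\delta,\gamma)$-critical path from $v$ to $u$ whose last edge $u w_\delta$ is the unique $\delta$-edge at $u$. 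The path-edge entering $w_\delta$ has color $\gamma$, so each candidate $\gamma$ deposits a distinct color at the vertex $w_\delta \in W(uv)$. Together with the color $\delta$ on $u w_\delta$ this yields $\sum_{w \in W(uv)} (\deg(w) - 1) \ge \Delta(G) + s - 2$, which, using $\deg(w) \le \Delta(G)$ and $|W(uv)| = s$, pins the configuration down tightly.

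Concretely, when $s = 1$ (say $\alpha \in \mathcal{U}(u)$), all $\ge \Delta(G) - 1$ candidates deposit at the single vertex $w = w_\alpha$, forcing $\deg(w) = \Delta(G)$, $\deg(u) = \kappa - \Delta(G) + 1$, and the clean partition $[\kappa] = \mathcal{U}(u) \sqcup \{\beta\} \sqcup C$ with $\mathcal{U}(w) = \{\alpha\} \cup C$, where $C$ is the candidate set; in particular $\beta \notin \mathcal{U}(u)$. The plan is then to recolor the edge $vx$ (currently $\alpha$) with some $\eta \notin \mathcal{U}(u) \cup \mathcal{U}(x) \cup \{\beta\}$: if such an acyclic swap is available, then $\mathcal{U}(u) \cap \mathcal{U}(v) = \emptyset$ and we are thrown back into the $s = 0$ case, a contradiction. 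When $s = 2$ the pure count is \emph{not} by itself contradictory, so here one instead recolors one of $u w_\alpha, u w_\beta$ (only $\beta$, respectively $\alpha$, is a candidate for it, and disjointness of the relevant color sets keeps the swap acyclic) in order to drop the intersection and again land in a lower-$s$ situation.

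The main obstacle is exactly this recoloring step in the tight cases. The counting is routine once the critical-path bookkeeping is in place, but performing a color swap on an edge incident to $u$ or to $v$ while \emph{certifying} both that no new dichromatic cycle is created and that some candidate genuinely becomes valid for $uv$ requires tracking the $(\alpha,\beta)$- and $(\beta,\gamma)$-maximal dichromatic paths through Fact~\ref{Fact1} and choosing the swap so that the offending critical path is destroyed without spawning another. Keeping this acyclicity accounting consistent across the $s = 1$ and $s = 2$ subcases, and across the degenerate configurations (for instance when $w_\alpha$ or $w_\beta$ coincides with $x$ or $y$), is where the real work lies.
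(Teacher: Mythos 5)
The paper only quotes this lemma from Hou~\etal\ without proof, so there is no in-paper argument to measure you against; judging your proposal on its own terms, the $s=0$ reduction and the critical-path bookkeeping are fine, but both of your escape routes in the remaining cases fail.

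For $s=1$: you only track where the $(\alpha,\gamma,u,v)$-critical paths \emph{start} (depositing each candidate $\gamma$ at $w_\alpha$), but every such path must also \emph{end} at $v$ through the unique $\alpha$-coloured edge at $v$, namely $vx$, so its penultimate edge deposits $\gamma$ in $\mathcal{U}(x)$ as well. Hence $C\subseteq\mathcal{U}(x)$, forcing $\deg(x)=\Delta(G)$ and $\mathcal{U}(x)=\{\alpha\}\cup C$, and therefore $\mathcal{U}(u)\cup\mathcal{U}(x)\cup\{\beta\}=\mathcal{U}(u)\sqcup\{\beta\}\sqcup C\cup\{\alpha\}=[\kappa]$. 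The colour $\eta\notin\mathcal{U}(u)\cup\mathcal{U}(x)\cup\{\beta\}$ that your plan hinges on does not exist; the problem is not that the swap might create a dichromatic cycle, it is that there is nothing to swap to. For $s=2$: both $\alpha$ and $\beta$ lie in $\mathcal{U}(u)$, on the two edges $uw_\alpha$ and $uw_\beta$ incident with $u$, so $\beta$ is not a candidate for $uw_\alpha$ (nor $\alpha$ for $uw_\beta$) --- recolouring either edge with the other common colour is not even a proper colouring. So the claim ``only $\beta$, respectively $\alpha$, is a candidate for it'' is false, and this case has no argument at all.

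A workable continuation of your setup has to go further: in the tight $s=1$ configuration one can, for instance, recolour $uw_\alpha$ with $\beta$ (which \emph{is} its unique candidate, since $\mathcal{U}(u)\cup\mathcal{U}(w_\alpha)=[\kappa]\setminus\{\beta\}$, and the swap is acyclic because $\mathcal{U}(u)\cap\mathcal{U}(w_\alpha)=\{\alpha\}$); this moves the common colour onto the $y$-side and forces the symmetric structure $\mathcal{U}(y)=\{\beta\}\cup C$, after which a combined recolouring of two edges is needed to free a colour for $uv$. No single-edge swap of the kind you describe can reduce $s$, so as written the proof has a genuine gap in both of its main cases.
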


\begin{lemma}[Wang and Zhang \cite{MR3166127}]\label{3++edge}
Let $G$ be a $\kappa$-minimal graph with $\kappa \geq \Delta(G) + 2$. If $v$ is a $3$-vertex in $G$, then every neighbor of $v$ is a $(\kappa-\Delta(G) + 3)^{+}$-vertex.
\end{lemma}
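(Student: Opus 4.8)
The plan is to argue by contradiction, exploiting the contraction operation afforded by $\kappa$-minimality. Suppose some $3$-vertex $v$, with $N_{G}(v) = \{u, u', u''\}$, has a neighbor $u$ with $\deg(u) \le \kappa - \Delta(G) + 2$. Since $G$ is in particular $\kappa$-deletion-minimal, Lemma \ref{3+vertex} already forces $\deg(u) \ge \kappa - \Delta(G) + 2$, so I may assume $\deg(u) = \kappa - \Delta(G) + 2$ exactly, and the task is to rule this value out.

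First I would form the minor $H = G / uv$ obtained by contracting the edge $uv$ to a single vertex $z$, whose neighborhood is $(N_{G}(u) \cup N_{G}(v)) \setminus \{u, v\}$, so that $\deg_{H}(z) \le (\deg(u) - 1) + 2 = \kappa - \Delta(G) + 3$. When $\kappa \le 2\Delta(G) - 3$ this is at most $\Delta(G)$, so $H$ is a proper minor with $\Delta(H) \le \Delta(G)$, and $\kappa$-minimality yields an acyclic edge coloring $\psi$ of $H$ with $\kappa$ colors. I would then pull $\psi$ back to a coloring $\phi$ of $G - uv$ by giving each edge the color of its image in $H$; thus $vu'$ and $vu''$ inherit the colors of $zu'$ and $zu''$, and each $uw$ inherits the color of $zw$.

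The key point is that properness of $\psi$ at $z$ makes $\mathcal{U}_{\phi}(u)$ and $\mathcal{U}_{\phi}(v)$ disjoint in $G - uv$, since all edges at $u$ and at $v$ correspond to \emph{distinct} edges at $z$. A short case check shows $\phi$ is acyclic: any bichromatic cycle of $G - uv$ either avoids $\{u, v\}$ and survives verbatim in $H$, or runs through $u$ or through $v$ and maps to a bichromatic closed walk through $z$, producing a bichromatic cycle in $H$ (Fact \ref{Fact1} handles the degenerate walk that meets both $u$ and $v$). With $\phi$ an acyclic edge coloring of $G - uv$ satisfying $\mathcal{U}_{\phi}(u) \cap \mathcal{U}_{\phi}(v) = \emptyset$, Fact \ref{Fact2} forces $\deg(u) + \deg(v) = \kappa + 2$, that is $\deg(u) = \kappa - 1$; this contradicts $\deg(u) \le \Delta(G) \le \kappa - 2$ and disposes of the generic case.

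The main obstacles are the two degenerate situations the clean argument does not reach. The first is the boundary regime $\kappa = 2\Delta(G) - 2$, which forces $\deg(u) = \Delta(G)$ and gives only $\deg_{H}(z) \le \Delta(G) + 1$; here I would need to exploit an overlap between $N_{G}(u)$ and $\{u', u''\}$ to save one degree, or treat the configuration directly. The second is when $u$ shares a neighbor with $u'$ or $u''$ (for instance when $uv$ lies in a triangle), since then two edges of $G$ collapse to a single edge of $H$ and the pullback must be repaired by recoloring the lost edge before Fact \ref{Fact2} can be invoked. I expect these overlap and multi-edge cases, rather than the generic contraction, to demand the bulk of the careful work.
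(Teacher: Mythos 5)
First, a caveat: the paper does not prove \autoref{3++edge} at all — it is imported from Wang and Zhang \cite{MR3166127} — so there is no in-paper proof to measure you against. Judged on its own terms, your generic argument is correct: when $uv$ lies in no triangle and $\kappa \le 2\Delta(G)-3$, contracting $uv$ yields a simple proper minor $H$ with $\Delta(H)\le \Delta(G)$, the pulled-back coloring of $G-uv$ is proper and acyclic with $\mathcal{U}(u)\cap\mathcal{U}(v)=\emptyset$ (a bichromatic cycle through both $u$ and $v$ would force a repeated color at $z$, so properness of $\psi$ at $z$ already kills it and Fact~\ref{Fact1} is not really needed), and Fact~\ref{Fact2} then gives $\deg(u)=\kappa-1>\Delta(G)$, a contradiction. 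This is the same mechanism that proves \autoref{2InTriangle}.

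The problem is that the two ``obstacles'' you defer are not peripheral; they are the substance of the lemma, and you resolve neither. (i) The boundary case $\kappa=2\Delta(G)-2$, equivalently $\deg(u)=\Delta(G)$, is precisely the case this paper needs: with $\kappa=\Delta+6$, the assumption $\deg(u)=\kappa-\Delta+2=8\le\Delta$ forces $\Delta\ge 8$, and at $\Delta=8$ one has $\kappa=14=2\Delta-2$. There your contraction gives $\deg_{H}(z)=(\Delta-1)+2=\Delta+1$, so $H$ is not covered by $\kappa$-minimality and the argument collapses; you offer only the hope of ``exploiting an overlap'' between $N_{G}(u)$ and $\{u',u''\}$ without showing such an overlap exists. (ii) The triangle case, where $u$ and $v$ share a neighbor, cannot be excluded by any hypothesis of the lemma (and is ubiquitous in the planar application); there the map from $E(G-uv)$ to $E(H)$ is not injective, so the pullback is not even a proper coloring of $G-uv$, and ``repairing'' it means extending an acyclic coloring of $G$ minus two adjacent edges — a genuinely different and harder extension problem than the single-edge situation that Fact~\ref{Fact2} addresses. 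As written, your argument establishes only a weaker statement (the lemma under the additional hypotheses $\kappa\le 2\Delta(G)-3$ and $uv$ in no triangle), so there is a genuine gap.
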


\begin{lemma}[Wang and Zhang \cite{MR3166127}]\label{L9}%
Let $G$ be a $\kappa$-deletion-minimal graph with $\kappa \geq \Delta(G) + 2$, and let $w_{0}$ be a $3$-vertex with $N_{G}(w_{0}) = \{w, w_{1}, w_{2}\}$, and $\deg(w) = \kappa - \Delta(G) + 3$. If $ww_{1}, ww_{2} \in E(G)$, then $\deg(w_{1}) = \deg(w_{2}) = \Delta(G)$ and $w$ is adjacent to precisely one vertex (namely $w_{0}$) with degree less than $\Delta(G) - 1$.
\end{lemma}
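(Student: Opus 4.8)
The plan is to establish both conclusions by deleting a single edge incident with $w_{0}$, invoking deletion-minimality to color the rest, and then showing that the asserted degree conditions are exactly what is needed to block the extension demanded by Fact~\ref{Fact2}. Write $\Delta = \Delta(G)$. Since $w_{0}$ is a $3$-vertex, Lemma~\ref{3+vertex} gives $\deg(w_{1}), \deg(w_{2}) \ge \kappa - \Delta + 2$, while $\deg(w_{1}), \deg(w_{2}) \le \Delta$ trivially; and $\deg(w) = \kappa - \Delta + 3$ means $w$ misses exactly $\Delta - 3$ colors under any coloring of its incident edges. The two triangles $w_{0}ww_{1}$ and $w_{0}ww_{2}$ share the spoke $w_{0}w$, and I will use constantly that a proper coloring forces distinct colors on edges meeting at a common vertex (e.g.\ $\phi(ww_{1}) \ne \phi(w_{0}w)$ and $\phi(ww_{2}) \ne \phi(w_{0}w_{2})$).

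For the degrees of $w_{1}$ and $w_{2}$ it suffices, by symmetry, to bound $\deg(w_{1})$. Delete $w_{0}w_{1}$ and let $\phi$ be an acyclic edge coloring of $G - w_{0}w_{1}$. Set $\gamma = \phi(w_{0}w)$ and $\delta = \phi(w_{0}w_{2})$, so $\mathcal{U}(w_{0}) = \{\gamma, \delta\}$. Because $\deg(w_{0}) = 3$ and $\kappa \ge \Delta + 2$, an empty intersection $\mathcal{U}(w_{0}) \cap \mathcal{U}(w_{1})$ would force $\deg(w_{1}) = \kappa - 1 > \Delta$ by Fact~\ref{Fact2}; hence at least one of $\gamma, \delta$ lies in $\mathcal{U}(w_{1})$. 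The key step is to read off the critical paths: every candidate color $\alpha$ for $w_{0}w_{1}$ is invalid, so assigning $\alpha$ to $w_{0}w_{1}$ closes a bichromatic $(\alpha, \beta)$-cycle with $\beta \in \mathcal{U}(w_{0}) \cap \mathcal{U}(w_{1}) \subseteq \{\gamma, \delta\}$. Following this cycle out of $w_{0}$ along its unique $\beta$-edge, which is $w_{0}w$ when $\beta = \gamma$ and $w_{0}w_{2}$ when $\beta = \delta$, and then along the next edge (of color $\alpha$), shows $\alpha \in \mathcal{U}(w)$ in the first case and $\alpha \in \mathcal{U}(w_{2})$ in the second. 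Thus every candidate lies in $\mathcal{U}(w) \cup \mathcal{U}(w_{2})$, and Fact~\ref{Fact1} makes each of these $(\alpha, \beta)$-maximal paths unique. In the principal case where only $\gamma \in \mathcal{U}(w_{1})$, all candidates lie in $\mathcal{U}(w)$ and are blocked there by at least $\gamma$ and $\phi(ww_{1})$; comparing $|C(w_{0}w_{1})| = \kappa - |\mathcal{U}(w_{0}) \cup \mathcal{U}(w_{1})|$ with $|\mathcal{U}(w)| = \kappa - \Delta + 3$ then gives $\deg(w_{1}) \ge \Delta - 1$.

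To upgrade this to $\deg(w_{1}) = \Delta$, I would assume $\deg(w_{1}) = \Delta - 1$ and contradict Fact~\ref{Fact2} by recoloring. In the tight case the candidate set is pinned down so rigidly that $\phi(ww_{2})$ is itself a candidate, whose only possible critical path runs $w_{0}, w, w_{2}$ with colors $\gamma, \phi(ww_{2})$ and hence dead-ends at $w_{2}$ unless $\gamma \in \mathcal{U}(w_{2})$; I would then recolor the spoke $w_{0}w$ with one of the $\Delta - 3$ colors missing at $w$, chosen so as not to create any new dichromatic cycle, which destroys all $(\alpha, \gamma)$-critical paths simultaneously and leaves a valid color for $w_{0}w_{1}$. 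The remaining sub-cases (anchor $\delta$, and $\gamma, \delta$ both in $\mathcal{U}(w_{1})$) are handled by the same recoloring applied to the appropriate spoke. This recoloring step is the main obstacle: one must select a replacement color at $w$ that neither reintroduces an obstruction at the other spoke nor creates a bichromatic cycle elsewhere, and it is precisely Fact~\ref{Fact1} (uniqueness of maximal dichromatic paths) that makes such a choice controllable. By symmetry the same argument yields $\deg(w_{2}) = \Delta$.

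For the final assertion, having established $\deg(w_{1}) = \deg(w_{2}) = \Delta$, I would show that a second neighbor of $w$ of degree below $\Delta - 1$ provides enough slack to recolor the spoke $w_{0}w$ and extend the coloring to $w_{0}w_{1}$. Concretely, delete $w_{0}w_{1}$ again; the colors that are unavailable when recoloring $w_{0}w$ (those that would recreate a dichromatic cycle or fail to break the critical paths) are controlled by the degrees of the neighbors of $w$, while the $\Delta - 3$ colors missing at $w$ form the supply. If $w_{0}$ and some $u \ne w_{0}$ both have degree at most $\Delta - 2$, the supply exceeds the demand, a suitable recoloring of $w_{0}w$ exists, and a valid color for $w_{0}w_{1}$ then appears, contradicting Fact~\ref{Fact2}. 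Hence $w_{0}$ is the only neighbor of $w$ with degree less than $\Delta - 1$, the threshold $\Delta - 1$ being exactly the break-even point of this count, which completes the proof.
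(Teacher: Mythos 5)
First, note that the paper itself offers no proof of this statement: \autoref{L9} is imported verbatim from Wang and Zhang \cite{MR3166127}, so there is no in-paper argument to compare yours against. Judged on its own merits, your proposal sets up the right machinery (delete an edge at $w_{0}$, invoke Fact~\ref{Fact2} to force a common color, trace the dichromatic cycle through $w_{0}$), and your count in the ``principal case'' is sound: when the unique common color is $\gamma=\phi(w_{0}w)$, every candidate for $w_{0}w_{1}$ must lie in $\mathcal{U}(w)\setminus\{\gamma,\phi(ww_{1})\}$, and comparing $\kappa-\deg(w_{1})=|C(w_{0}w_{1})|$ with $\kappa-\Delta+1$ does give $\deg(w_{1})\geq\Delta-1$. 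But the argument has three genuine gaps. The first is the dismissed sub-case where the only common color is $\delta=\phi(w_{0}w_{2})$: there all candidates are forced into $\mathcal{U}(w_{2})$, and the identical count yields only $\kappa-\deg(w_{1})\leq\deg(w_{2})-1\leq\Delta-1$, i.e.\ $\deg(w_{1})\geq\kappa-\Delta+1$, which is nowhere near $\Delta-1$. ``The same recoloring applied to the appropriate spoke'' does not fix this; the bound produced by that sub-case constrains $\deg(w_{2})$, not $\deg(w_{1})$, and reducing to the principal case requires an actual recoloring argument you have not supplied.

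The second gap is the upgrade from $\deg(w_{1})\geq\Delta-1$ to $\deg(w_{1})=\Delta$. You propose to recolor $w_{0}w$ with one of the $\Delta-3$ colors absent at $w$, ``chosen so as not to create any new dichromatic cycle,'' but you never show such a choice exists or that it helps: a replacement color $c$ must avoid $\delta$, must not close a $(c,\delta)$-cycle through $w_{0}ww_{2}$, and --- crucially --- must not itself lie in $\mathcal{U}(w_{1})$, or else the obstruction at $w_{0}w_{1}$ is simply recreated with $c$ in place of $\gamma$. Since $|\mathcal{U}(w_{1})|$ can be as large as $\Delta-1$ while only $\Delta-3$ colors are missing at $w$, it is entirely possible that every available replacement lands in $\mathcal{U}(w_{1})$, so the pigeonhole you are implicitly relying on fails. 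The third gap is the final assertion about the neighbors of $w$: ``the supply exceeds the demand'' is stated without ever quantifying the demand, i.e.\ without identifying which colors are actually forbidden for the recolored spoke and why a second low-degree neighbor of $w$ frees one up. As written, the proposal proves only the lower bound $\deg(w_{i})\geq\Delta-1$ in one of the sub-cases; the remaining claims of the lemma are asserted rather than established.
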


\begin{lemma}\label{3-10vertex}
Let $G$ be a $\kappa$-deletion-minimal graph with maximum degree $\Delta$, and let $w_{0}$ be a $3$-vertex with $N_{G}(w_{0}) = \{w, w_{1}, w_{2}\}$. If $\deg_{G}(w) = \kappa - \Delta + 4 = \ell$ with $8 \leq \ell \leq 10$ and $N_{G}(w) = \{w_{0}, w_{1}, w_{2}, \dots, w_{\ell - 1}\}$, then there exists no $4$-set $X^{*} \subseteq \{w_{1}, w_{2}, \dots, w_{\ell - 1}\}$ satisfying the following four conditions: (1) every vertex in $X^{*}$ is a $5^{-}$-vertex; (2) the degree-sum of vertices in $X^{*}$ is at most $\kappa - \Delta + 9$; (3) the degree-sum of any two vertices in $X^{*}$ is at most $\Delta$; (4) $X^{*}$ has at least two $4^{-}$-vertices. 
\end{lemma}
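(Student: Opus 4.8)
The plan is to argue by contradiction. Suppose such a $4$-set $X^* \subseteq \{w_{1}, \dots, w_{\ell-1}\}$ exists. Since $w_{0}w \in E(G)$ and $G$ is $\kappa$-deletion-minimal, the graph $G - w_{0}w$ admits an acyclic edge coloring $\phi$ with at most $\kappa$ colors. Write $\beta_{1} = \phi(w_{0}w_{1})$ and $\beta_{2} = \phi(w_{0}w_{2})$, so $\mathcal{U}(w_{0}) = \{\beta_{1}, \beta_{2}\}$, while $|\mathcal{U}(w)| = \ell - 1$. Hence the number of candidate colors for $w_{0}w$ is at least $\kappa - 2 - (\ell - 1) = \Delta - 5 \geq 3$, since $\Delta \geq \deg(w) = \ell \geq 8$. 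By Fact~\ref{Fact2} none of these candidate colors is valid, so each candidate $\alpha$ closes a dichromatic cycle through $w_{0}w$; because the only edges at $w_{0}$ other than $w_{0}w$ are $w_{0}w_{1}$ and $w_{0}w_{2}$, this cycle (unique by Fact~\ref{Fact1}) must be an $(\alpha, \beta_{1})$- or an $(\alpha, \beta_{2})$-cycle leaving $w_{0}$ along $w_{0}w_{1}$ or $w_{0}w_{2}$.

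First I would pin down the two blockers. An $(\alpha, \beta_{j})$-cycle forces $\beta_{j} \in \mathcal{U}(w)$, so let $w_{p}, w_{q}$ be the unique neighbors of $w$ with $\phi(ww_{p}) = \beta_{1}$ and $\phi(ww_{q}) = \beta_{2}$ whenever these colors occur at $w$. Tracing the cycle shows that a candidate $\alpha$ can be blocked via $\beta_{1}$ only if $w_{p}$ carries an edge of color $\alpha$, and via $\beta_{2}$ only if $w_{q}$ does; since distinct candidates force distinct such colors, the number of blocked candidates is at most $(\deg(w_{p}) - 1) + (\deg(w_{q}) - 1)$. As every candidate is blocked, comparing with the $\Delta - 5$ candidates — equivalently, applying the inequality of Fact~\ref{Fact2} to $w_{0}w$ — yields $\deg(w_{p}) + \deg(w_{q}) \geq \Delta - 3$. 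Thus the two blockers carry a large degree-sum, which is in direct tension with the fact that the vertices of $X^*$ are $5^{-}$-vertices with pairwise degree-sum at most $\Delta$; this separation between the ``blocking'' neighbors of $w$ and the recolorable set $X^*$ is the structural handle of the argument.

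The heart of the proof is then to spend the budget encoded in the four conditions to destroy the remaining obstructions. For each $w_{i} \in X^*$ the edge $ww_{i}$ has at least $\kappa - (\ell - 1) - (\deg(w_{i}) - 1) = \Delta - \deg(w_{i}) - 2$ admissible colors, since $\deg(w_{i}) \leq 5$ by condition~(1); condition~(3) gives, for any two $w_{i}, w_{j} \in X^*$, a color missing at both (as $\deg(w_{i}) + \deg(w_{j}) \leq \Delta < \kappa$), which is what makes the pairwise exchanges feasible, while condition~(4) supplies two vertices of degree at most $4$ with an extra reservoir of colors. I would recolor a carefully chosen edge $ww_{i}$ with $w_{i} \in X^*$ (choosing $w_{i} \notin \{w_{p}, w_{q}\}$, which is possible since $|X^*| = 4$) to a new color, thereby freeing its old color $\gamma \notin \{\beta_{1}, \beta_{2}\}$ at $w$; then $\gamma$ becomes a candidate for $w_{0}w$, and the recoloring is arranged so that the $(\gamma, \beta_{1})$- and $(\gamma, \beta_{2})$-paths through $w$ are broken. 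Extending $\phi$ by giving $w_{0}w$ the color $\gamma$ then produces an acyclic edge coloring of $G$ with $\kappa$ colors, contradicting the $\kappa$-deletion-minimality of $G$.

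The main obstacle is controlling the dichromatic cycles created by the recolorings: shifting the colors of one or several edges at $w$ can spawn new critical paths and new blocked candidates, and one must guarantee that at least one genuinely valid color survives. This is precisely where the total-degree bound~(2), namely degree-sum at most $\kappa - \Delta + 9 = \ell + 5$, is used, since it caps how many colors the four vertices of $X^*$ can jointly forbid and thus leaves enough mutually compatible colors for the exchanges, while condition~(1) keeps each individual recoloring cheap. I expect the argument to split into cases according to how many of $\beta_{1}, \beta_{2}$ lie in $\mathcal{U}(w)$ and according to the degree multiset of $X^*$ (how many $2$-, $3$-, and $4$-vertices it contains), with the tightest case being when the $X^*$-degrees are as large as conditions~(1)--(4) permit, as that case leaves the least room to recolor.
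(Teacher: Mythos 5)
Your opening moves match the paper's: assume $X^{*}$ exists, take an acyclic edge coloring $\phi$ of $G - ww_{0}$, observe via Fact~\ref{Fact2} that $\mathcal{U}(w)\cap\mathcal{U}(w_{0})\neq\emptyset$ and that every candidate color for $ww_{0}$ must be blocked by a dichromatic cycle leaving $w_{0}$ along $w_{0}w_{1}$ or $w_{0}w_{2}$, and then aim to recolor so as to liberate a valid color. From that point on, however, the proposal is a plan rather than a proof, and what is deferred is exactly the hard part. The sentences ``the recoloring is arranged so that the $(\gamma,\beta_{1})$- and $(\gamma,\beta_{2})$-paths through $w$ are broken'' and ``I expect the argument to split into cases'' are promissory notes: nothing in the proposal identifies which recolorings are actually available, verifies that they create no new dichromatic cycles, or derives a contradiction from conditions (1)--(4). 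In the paper this occupies several pages. In the one-common-color case one must, among other things, establish the existence of a $(3,\ell,w,w_{2})$-alternating path, show $X\cap\mathcal{U}(w_{2})=\emptyset$, pass to the colorings $\varphi_{m}$ of $G-ww_{m}$ for $m\in X$, apply Fact~\ref{Fact2} to each edge $ww_{m}$ to extract a color $\mu_{m}\in\{1,\dots,\ell-1\}$ per vertex of $X^{*}$, and finally reach a contradiction by a pigeonhole and degree-sum count --- which is the only place condition (2) (degree-sum at most $\kappa-\Delta+9$) genuinely enters. None of this is present, nor replaced by an alternative argument.

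There is also a structural feature of the hypothesis that you never use and appear not to have noticed: $w_{1},w_{2}\in N_{G}(w_{0})\cap N_{G}(w)$, i.e.\ the edge $ww_{0}$ lies in the two triangles $ww_{1}w_{0}$ and $ww_{2}w_{0}$. This is what powers the short Case 2 of the paper ($|\mathcal{U}(w)\cap\mathcal{U}(w_{0})|=2$): a candidate blocked through $w_{0}w_{j}$ forces that candidate into $\mathcal{U}(w_{j})$ (the second vertex of the critical path), whence $\deg_{G}(w_{1}),\deg_{G}(w_{2})\geq\Delta-1$, so $w_{1},w_{2}$ cannot lie in $X^{*}$ and two colors $p,q$ indexing vertices of $X^{*}$ can be moved onto $w_{0}w_{1}$ and $w_{0}w_{2}$; Fact~\ref{Fact2} applied to the new coloring yields $\deg_{G}(w_{p})+\deg_{G}(w_{q})\geq\Delta+1$, contradicting condition (3). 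Your blockers $w_{p},w_{q}$ are introduced only as neighbors of $w$; you never arrange for them to lie in $X^{*}$, so condition (3) is not applicable to them, and your stated bound $\deg(w_{p})+\deg(w_{q})\geq\Delta-3$ is in any case too weak to contradict the bound $\leq\Delta$ of condition (3). In short, the framework is right but the entire case analysis constituting the body of the proof is missing, so the proposal has a genuine gap.
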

\begin{proof}%
Suppose to the contrary that there exists a $4$-set $X^{*}$ satisfying all the four conditions. Let $X$ be the subscripts of vertices in $X^{*}$. Since $G$ is $\kappa$-deletion-minimal, the graph $G - ww_{0}$ has an acyclic edge coloring $\phi$ with $\phi(ww_{i}) = i$ for $i \in \{1, \dots, \ell - 1\}$. The fact that $\deg_{G}(w) + \deg_{G}(w_{0}) \leq \Delta + 3 < \kappa + 2$ and Fact~\ref{Fact2} imply that $\mathcal{U}(w)\cap \mathcal{U}(w_{0}) \neq \emptyset$.

\begin{case}%
$|\mathcal{U}(w)\cap \mathcal{U}(w_{0})| = 1$.
\end{case}
It follows that $|C(ww_{0})| = \Delta - 4$.
\begin{subcase}%
The common color is on $ww_{1}$ or $ww_{2}$.
\end{subcase}
Without loss of generality, we may assume that $w_{0}w_{1}$ is colored with $\ell$ and $w_{0}w_{2}$ is colored with $1$. Note that there exists a $(1, \alpha, w, w_{0})$-critical path for every $\alpha \in \{\ell + 1, \dots, \kappa \}$, so we have that $\{\ell + 1, \dots, \kappa \} \subseteq \mathcal{U}(w_{1}) \cap \mathcal{U}(w_{2})$. Notice that the set  $\{1, \dots, \ell\} \setminus (\mathcal{U}(w_{1}) \cup \mathcal{U}(w_{2}))$ is nonempty. Now, reassigning $\ell$ to $ww_{0}$ and  a color in $\{1, \dots, \ell\} \setminus (\mathcal{U}(w_{1}) \cup \mathcal{U}(w_{2}))$ to $w_{0}w_{1}$ results in an acyclic edge coloring of $G$, a contradiction.
\begin{subcase}%
The common color is not on $ww_{1}$ and $ww_{2}$.
\end{subcase}
Without loss of generality, we may assume that $w_{0}w_{1}$ is colored with $\ell$ and $w_{0}w_{2}$ is colored with $3$. There exists a $(3, \alpha, w, w_{0})$-critical path for $\alpha \in \{\ell + 1, \dots, \kappa\}$. It follows that $\{\ell + 1, \dots, \kappa \} \subseteq \Upsilon(ww_{3}) \cap \Upsilon(w_{0}w_{2})$ and $\deg_{G}(w_{3}) \geq \Delta - 3 \geq 5$.

If $1 \notin \mathcal{U}(w_{2})$, then reassigning $1$ to $w_{0}w_{2}$ will take us back to Case~1.1. Hence, we have that $1 \in \Upsilon(w_{0}w_{2})$  and $\deg_{G}(w_{2}) \geq \Delta - 1 \geq 7$. By \autoref{3+vertex}, we have that $\deg_{G}(w_{1}) \geq \kappa - \Delta + 2 \geq 6$. 

Note that $w_{1}, w_{2}$ and $w_{3}$ are $5^{+}$-vertices, there exists a $4^{-}$-vertex $w_{x}$ with $x \in X \setminus \mathcal{U}(w_{2})$. If $\ell \notin \mathcal{U}(w_{2})$, then reassigning the color $x$ to $w_{0}w_{2}$ results in a new acyclic edge coloring $\sigma$ of $G - ww_{0}$, and then $C_{\sigma}(ww_{0}) = \{\ell + 1, \dots, \kappa \} \subseteq \Upsilon(ww_{x})$ and $\deg_{G}(w_{x}) \geq \Delta - 3 \geq 5$, which contradicts that $w_{x}$ is a $4^{-}$-vertex. Hence, $\Upsilon(w_{0}w_{2}) =  \{1, 2\} \cup \{\ell, \dots, \kappa\}$ and $\deg_{G}(w_{2}) = \Delta$, which implies that $X \cap \Upsilon(w_{0}w_{2}) = \emptyset$. 

\begin{claim}\label{MCL}
There exists a $(3, \ell, w, w_{2})$-alternating path. 
\end{claim}
\begin{proof}
Suppose to the contrary that there exists no $(3, \ell, w, w_{2})$-alternating path. We can revise $\phi$ by assigning $\ell$ to $ww_{0}$ and erase the color from $w_{0}w_{1}$, and obtain an acyclic edge coloring of $G - w_{0}w_{1}$. If some color $\alpha \in \{\ell + 1, \dots, \kappa\}$ is absent in $\mathcal{U}_{\phi}(w_{1})$, then we can further assign $\alpha$ to $w_{0}w_{1}$, since there exists a $(3, \alpha, w, w_{0})$-critical path with respect to $\phi$. If some color $\alpha \in \{4, \dots, \ell - 1\}$ is absent in $\mathcal{U}_{\phi}(w_{1})$, then we can further assign $\alpha$ to $w_{0}w_{1}$. Hence, $\mathcal{U}_{\phi}(w_{1}) \supseteq \{1\} \cup \{4, \dots, \kappa\}$ and $\deg_{G}(w_{1}) \geq \kappa - 2 > \Delta(G)$, a contradiction. 
\end{proof}

Therefore, $\{\ell, \dots, \kappa\} \subseteq \Upsilon(ww_{3})$ and $\deg_{G}(w_{3}) \geq \Delta - 2 \geq 6$, which implies that $X \cap \mathcal{U}(w_{2}) = \emptyset$. 

There exists a $(\ell, m, w_{0}, w_{2})$-critical path for every $m \in X$; otherwise, reassigning $m$ to $w_{0}w_{2}$ results in another new acyclic edge coloring $\phi_{m}$ of $G - ww_{0}$, by the above arguments, $\{\ell, \dots, \kappa \} \subseteq \Upsilon(ww_{m})$ and $\deg_{G}(w_{m}) \geq \Delta - 2 \geq 6$, a contradiction. Thus, we have that $X \subseteq \Upsilon(w_{0}w_{1})$. By symmetry, we may assume that $\{4, 5, 6, 7\} = X \subseteq \Upsilon(w_{0}w_{1})$.

Suppose that $\{3, 8, 9, \dots, \ell - 1\} \nsubseteq \mathcal{U}(w_{1})$, say $\lambda$ is a such color. There exists a $(\lambda, \alpha, w, w_{2})$-alternating path for $\ell + 1 \leq \alpha \leq \kappa$; otherwise, reassigning $\lambda$ to $w_{0}w_{2}$ (if $\lambda = 3$ there is no change to $w_{0}w_{2}$) and $\alpha$ to $ww_{0}$ results in an acyclic edge coloring of $G$. Similar to \autoref{MCL}, there exists a $(\lambda, \ell, w, w_{2})$-alternating path. Reassigning $\lambda$ to $w_{0}w_{1}$ and $4$ to $w_{0}w_{2}$ results in a new acyclic edge coloring $\varphi$ of $G - ww_{0}$. Since there is no $(\lambda, \alpha, w, w_{0})$-critical path with respect to $\varphi$, thus there exists a $(4, \alpha, w_{0}, w)$-critical path with respect to $\varphi$ for $\alpha \in \{\ell, \dots, \kappa\}$, and then $\{\ell, \dots, \kappa\} \subseteq \Upsilon(ww_{4})$, which contradicts the fact that $w_{4}$ is a $5^{-}$-vertex. Hence, we have that $\{1\} \cup \{3, 4, \dots, \ell\} \subseteq \mathcal{U}(w_{1})$. 

Let $\varphi_{m}$ be obtained from $\phi$ by reassigning $m$ to $ww_{0}$ and erasing the color on $ww_{m}$, where $m \in \{4, 5, 6, 7\}$. Note that $\varphi_{m}$ is an acyclic edge coloring of $G - ww_{m}$ for $m \in \{4, 5, 6, 7\}$. By Fact~\ref{Fact2}, we have that $|\Upsilon(ww_{m}) \cap \{1, 2, \dots, \ell - 1\}| \geq 1$ for $m \in \{4, 5, 6, 7\}$.

Let $\alpha$ be an arbitrary color in $\{\ell, \dots, \kappa\} \setminus (\Upsilon(w_{0}w_{1}) \cup \Upsilon(ww_{4}) \cup \Upsilon(ww_{5}) \cup \Upsilon(ww_{6}) \cup \Upsilon(ww_{7}))$. Since there exists neither $(1, \alpha, w, w_{x})$-critical path nor $(3, \alpha, w, w_{x})$-critical path (with respect to $\varphi_{x}$) for every $x \in X$, thus there exists a $(\lambda_{x}, \alpha, w, w_{x})$-critical path (with respect to $\varphi_{x}$), where $\lambda_{x} \in \{2, 8, 9, \dots, \ell - 1\}$. Moreover, there exists $(\lambda, \alpha, w, w_{x_{1}})$- and $(\lambda, \alpha, w, w_{x_{2}})$-critical path for some $\lambda \in \{2, 8, 9, \dots, \ell - 1\}$ since $|X| > |\{2, 8, 9, \dots, \ell - 1\}|$, but this contradicts Fact~\ref{Fact1}.

So we may assume that $\alpha \in \Upsilon(ww_{4}) \cup \Upsilon(ww_{5}) \cup \Upsilon(ww_{6}) \cup \Upsilon(ww_{7})$ for every $\alpha \in \{\ell, \dots, \kappa\} \setminus \Upsilon(w_{0}w_{1})$.

\begin{align*}
\kappa - \Delta + 9 \geq&\deg_{G}(w_{4}) + \deg_{G}(w_{5}) + \deg_{G}(w_{6}) + \deg_{G}(w_{7})\\
\geq & |\{\ell, \dots, \kappa\} \setminus \Upsilon(w_{0}w_{1})| + 4 + \sum_{t = 4}^{7}|\Upsilon(ww_{t}) \cap \{1, \dots, \ell - 1\}|\\
\geq & (\kappa - \Delta) + 4 + (1 + 1 + 1 + 1)\\
=& \kappa - \Delta + 8.
\end{align*}

By symmetry, we may assume that $|\Upsilon(ww_{4}) \cap \{1, \dots, \ell - 1\}| = |\Upsilon(ww_{5}) \cap \{1, \dots, \ell - 1\}| = |\Upsilon(ww_{6}) \cap \{1, \dots, \ell - 1\}| = 1$. Let $\Upsilon(ww_{4}) \cap \{1, \dots, \ell - 1\} = \{\mu_{1}\}$, $\Upsilon(ww_{5}) \cap \{1, \dots, \ell - 1\} = \{\mu_{2}\}$ and $\Upsilon(ww_{6}) \cap \{1, \dots, \ell - 1\} = \{\mu_{3}\}$. If $\mu_{1} = \mu_{2} = \mu$, then there exists a $(\mu, \alpha, w, w_{4})$- and $(\mu, \alpha, w, w_{5})$-critical path, where $\alpha \in \{\ell, \dots, \kappa\} \setminus (\Upsilon(ww_{4}) \cup \Upsilon(ww_{5}))$, which contradicts Fact~\ref{Fact1}. Thus $\mu_{1}, \mu_{2}, \mu_{3}$ are distinct.

If $\mu_{1} \in \{4, 5, 6, 7\}$, then every color $\alpha \in \{\ell, \dots, \kappa\} \setminus (\Upsilon(ww_{4}) \cup \Upsilon(ww_{\mu_{1}}))$ is valid for $ww_{4}$ with respect to $\varphi_{4}$; note that $\{\ell, \dots, \kappa\} \setminus (\Upsilon(ww_{4}) \cup \Upsilon(ww_{\mu_{1}}))$ is a nonempty set. By symmetry, we may assume that $\{\mu_{1}, \mu_{2}, \mu_{3}\} \cap \{4, 5, 6, 7\} = \emptyset$.

Since $\mu_{1}, \mu_{2}, \mu_{3}$ are distinct, we may assume that $\mu_{1} \neq 2$. If $2 \notin \Upsilon(w_{0}w_{1})$, then reassigning $2$ to $w_{0}w_{1}$ and $4$ to $w_{0}w_{2}$ results in a new acyclic edge coloring $\varphi^{*}$ of $G - ww_{0}$. For every color $\beta \in \{\ell, \dots, \kappa\} \setminus \Upsilon(w_{0}w_{1})$, there exists no $(2, \beta, w, w_{0})$-critical path with respect to $\varphi^{*}$, thus there exists a $(4, \beta, w, w_{0})$-critical path with respect to $\varphi^{*}$, and then $\{\ell, \dots, \kappa\} \setminus \Upsilon(w_{0}w_{1}) \subseteq \Upsilon(ww_{4})$ and $\deg_{G}(w_{4}) \geq |\{\ell, \dots, \kappa\} \setminus \Upsilon(w_{0}w_{1})| + 2 \geq 6$, which contradicts the degree of $w_{4}$.

Hence, we have that $\{1, \dots, \ell - 1\} \subseteq \Upsilon(w_{0}w_{1})$ and $|\{\ell, \dots, \kappa\} \setminus \Upsilon(w_{0}w_{1})| \geq \kappa - \Delta + 1$. By similar arguments as above, we can prove that $\Upsilon(ww_{7}) \cap \{1, \dots, \ell - 1\} = \{\mu_{4}\}$ and $\mu_{1}, \mu_{2}, \mu_{3}, \mu_{4}$ are distinct. Moreover, we can also conclude that $\{\mu_{1}, \mu_{2}, \mu_{3}, \mu_{4}\} \cap \{4, 5, 6, 7\} = \emptyset$.

Suppose that $\mu_{1} = 3$. Since there exists no $(3, \alpha, w, w_{4})$-critical path with respect to $\varphi_{4}$, where $\alpha \in \{\ell + 1, \dots, \kappa\}$, thus $\{\ell + 1, \dots, \kappa\} \subseteq \Upsilon(ww_{4})$, a contradiction. So, by symmetry, we may assume that $\{\mu_{1}, \mu_{2}, \mu_{3}, \mu_{4}\} = \{1, 2, 8, 9\}$.

By symmetry, we assume that $\mu_{1} = 1$. Note that there exists no $(1, \alpha, w, w_{4})$-critical path (with respect to $\varphi_{4}$) for every $\alpha \in \{\ell, \dots, \kappa\} \setminus \Upsilon(w_{0}w_{1})$, thus $\{\ell, \dots, \kappa\} \setminus \Upsilon(w_{0}w_{1}) \subseteq \Upsilon(ww_{4})$; otherwise, reassigning $4$ to $ww_{0}$ and a color $\alpha$ to $ww_{4}$ results in an acyclic edge coloring. Now, we have that $\deg_{G}(w_{4}) \geq 2 + |\{\ell, \dots, \kappa\} \setminus \Upsilon(w_{0}w_{1})| \geq 6$, a contradiction.

\begin{case}%
$\mathcal{U}(w)\cap \mathcal{U}(w_{0}) = \{\lambda_{1}, \lambda_{2}\}$, $\phi(w_{0}w_{1}) = \lambda_{1}$ and $\phi(w_{0}w_{2}) = \lambda_{2}$.
\end{case}
If follows that $|C(ww_{0})| = \Delta - 3$. First of all, we show the following claim:
\begin{enumerate}[label = ($\ast$)]
\item $C(ww_{0}) = \{\ell, \dots, \kappa\} \subseteq \mathcal{U}(w_{1}) \cap \mathcal{U}(w_{2})$.
\end{enumerate}
By contradiction and symmetry, assume that there exists a color $\zeta$ in $\{\ell, \dots, \kappa\} \setminus \mathcal{U}(w_{1})$. Clearly, there exists a $(\lambda_{2}, \zeta, w_{0}, w)$-critical path, and then there exists no $(\lambda_{2}, \zeta, w_{0}, w_{1})$-critical path. Now, reassigning $\zeta$ to $w_{0}w_{1}$ will take us back to Case 1. Hence, we have $\{\ell, \dots, \kappa\} \subseteq \mathcal{U}(w_{1})$; similarly, we have $\{\ell, \dots, \kappa\} \subseteq \mathcal{U}(w_{2})$. This completes the proof of ($\ast$).

Note that $w_{1}$ and $w_{2}$ have degree at least $\Delta - 1 \geq 7$, this implies that $\{1, 2\} \cap X = \emptyset$ and $|X \cap \Upsilon(w_{0}w_{1})| \leq 1$ and $|X \cap \Upsilon(w_{0}w_{2})| \leq 1$. Let $\{p, q\} \subseteq X \setminus (\Upsilon(w_{0}w_{1}) \cup \Upsilon(w_{0}w_{2}))$. Reassigning $p$ to $w_{0}w_{1}$ and $q$ to $w_{0}w_{2}$ results in a new acyclic edge coloring $\psi$ of $G - ww_{0}$. Hence, we have that $C_{\psi}(ww_{0}) \subseteq \Upsilon(ww_{p}) \cup \Upsilon(ww_{q})$, and then $\deg_{G}(w_{p}) + \deg_{G}(w_{q}) \geq (\Delta - 3) + 2 + 2 \geq \Delta + 1$, which is a contradiction.
\resetcounter
\end{proof}

\subsection{Local structure on the $4$-vertices}
\begin{lemma}\label{4Sum}%
Let $G$ be a $\kappa$-deletion-minimal graph with maximum degree $\Delta$ and $\kappa \geq \Delta + 2$, and let $w_{0}$ be a $4$-vertex with $N_{G}(w_{0}) = \{w, v_{1}, v_{2}, v_{3}\}$.
\begin{enumerate}[label = (\alph*)]%
\item\label{4Suma} If $\deg_{G}(w) \leq \kappa - \Delta$, then
\begin{equation}\label{EQ1}%
\sum_{x \in N_{G}(w_{0})} \deg_{G}(x) \geq 2\kappa - \deg_{G}(w_{0}) + 8 = 2\kappa + 4.
\end{equation}
\item\label{4Sumb} If $\deg_{G}(w) \leq \kappa - \Delta + 1$ and $ww_{0}$ is contained in two triangles $ww_{1}w_{0}$ and $ww_{2}w_{0}$, then
\begin{equation}\label{EQ2}%
\sum_{x \in N_{G}(w_{0})} \deg_{G}(x) \geq 2\kappa - \deg_{G}(w_{0}) + 9 = 2\kappa + 5.
\end{equation}
Furthermore, if the equality holds in \eqref{EQ2}, then all the other neighbors of $w$ are $6^{+}$-vertices.
\end{enumerate}
\end{lemma}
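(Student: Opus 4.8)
The plan is to mimic the structure of the proof of \autoref{3-10vertex}: assume $G$ is $\kappa$-deletion-minimal, consider the edge $ww_{0}$, and use minimality to obtain an acyclic edge coloring $\phi$ of $G - ww_{0}$. Since $\deg_G(w) + \deg_G(w_0)$ is small relative to $\kappa$ (in part \ref{4Suma}, at most $(\kappa-\Delta)+4 < \kappa+2$), Fact~\ref{Fact2} forces $\mathcal{U}(w) \cap \mathcal{U}(w_0) \neq \emptyset$, and more precisely the quantitative inequality $\deg(w) + \deg(w_0) + \sum_{x \in W(ww_0)} \deg(x) \geq \kappa + 2s + 2$ where $s = |\mathcal{U}(w)\cap\mathcal{U}(w_0)|$. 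The goal is to extract a lower bound on $\sum_{x \in N_G(w_0)}\deg_G(x)$ from this, so I would carefully identify which neighbors of $w_0$ are forced to carry many colors because of critical paths ending at $w_0$.

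First I would set up the coloring so that $\phi(w_0 x) = $ some fixed index for each $x \in N_G(w_0)\setminus\{w\}$, and note $C(ww_0)$ has size $\kappa - \deg(w) - \deg(w_0) + s + 2$ roughly. For every color $\alpha$ that is a candidate for $ww_0$ but not valid, there must be an $(\alpha, \beta, w, w_0)$-critical path, forcing $\alpha$ into $\mathcal{U}(v_i)$ for the neighbor $v_i$ of $w_0$ whose edge $w_0 v_i$ is colored $\beta$; this is exactly the mechanism used repeatedly in \autoref{3-10vertex} via statements like $\{\ell,\dots,\kappa\}\subseteq \mathcal{U}(w_1)\cap\mathcal{U}(w_2)$. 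Accumulating these forced colors across the three neighbors $v_1,v_2,v_3$ and combining with the base degree contribution of $w$ should yield the bound $2\kappa - \deg(w_0) + 8$. For part \ref{4Sumb} the two triangles $ww_1w_0$ and $ww_2w_0$ share colors between $w$ and the $w_i$, which tightens the count by one (giving $+9$); the triangle structure means the colors on $ww_1$ and $ww_2$ coincide with colors on $w_0w_1$ and $w_0w_2$, so those shared edges cannot absorb candidate colors freely, forcing an extra unit into the degree sum.

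For the ``furthermore'' clause in part \ref{4Sumb}, assuming equality in \eqref{EQ2}, I would argue that every slack in the counting is tight, so each forced critical path is genuinely present; then if some neighbor $u \neq w_0, w_1, w_2$ of $w$ were a $5^{-}$-vertex, I would attempt a recoloring (recolor $w_0w_i$ by a color in $X \setminus (\Upsilon(w_0w_1)\cup\Upsilon(w_0w_2))$, or swap colors along a dichromatic path as in \autoref{MCL}) that either reduces the forced degree of $u$ below what equality permits or directly extends $\phi$ to $G$, contradicting $\chiup_a'(G) > \kappa$. The main obstacle I anticipate is the bookkeeping of overlapping color sets: a single critical path can force the same color into two different neighbors only if Fact~\ref{Fact1} is respected, and the delicate part is ruling out double-counting while ensuring the colors attributed to $v_1, v_2, v_3$ are genuinely distinct contributions to their degrees. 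The triangle case compounds this because the shared edge colors partially determine $\mathcal{U}(w)$ itself, so I would need to handle the interaction between the triangle-induced constraints and the critical-path-induced constraints simultaneously rather than additively.
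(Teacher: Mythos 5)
Your overall framework --- delete $ww_{0}$, take an acyclic edge coloring $\phi$ of $G-ww_{0}$, invoke Fact~\ref{Fact2} to get $s=|\mathcal{U}(w)\cap\mathcal{U}(w_{0})|\geq 1$, and use the non-validity of every candidate color for $ww_{0}$ to force colors into the neighbors of $w_{0}$ via critical paths --- is the same as the paper's. But the counting you describe does not reach the stated bound, and the step that closes the gap is missing. If each color $\alpha\in C(ww_{0})$ is only forced into \emph{one} set $\mathcal{U}(v_{i})$ (the neighbor whose edge $w_{0}v_{i}$ carries the second color of the critical path), then writing $\mathbb{S}=\Upsilon(w_{0}v_{1})\uplus\Upsilon(w_{0}v_{2})\uplus\Upsilon(w_{0}v_{3})$ one only gets $\sum_{x\in N_{G}(w_{0})}\deg_{G}(x)=\deg_{G}(w_{0})+\deg_{G}(w)-1+\sum_{\alpha}\mul_{\mathbb{S}}(\alpha)\geq \deg_{G}(w_{0})+\deg_{G}(w)-1+|C(ww_{0})|=\kappa+s+1$, which is far below $2\kappa+4$. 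The paper's proof rests on two devices you do not mention: (i) $\phi$ is chosen among all acyclic edge colorings of $G-ww_{0}$ so that $s$ is \emph{minimum}; and (ii) using this minimality, every color of $C(ww_{0})$ is shown to appear at least \emph{twice} in the multiset $\mathbb{S}$ (if some $\alpha$ appeared only once, say in $\Upsilon(w_{0}v_{1})$, then reassigning $\alpha$ to another edge $w_{0}v_{j}$ would produce a coloring of $G-ww_{0}$ with fewer common colors at $w$ and $w_{0}$, contradicting minimality). This doubles the main term to $2|C(ww_{0})|$ and, together with further claims that every color of $\mathcal{U}(w)$ also lies in $\mathbb{S}$ and that certain colors of $\mathcal{U}(w_{0})$ appear twice, yields exactly the $2\kappa-\deg_{G}(w_{0})+8$ (resp.\ $+9$) bound. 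Without the minimality assumption and the multiplicity-two claim, your accumulation of forced colors cannot produce the leading term $2\kappa$.

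Two smaller omissions. First, the case $s=1$ is not amenable to this counting at all; the paper disposes of it separately by explicit recolorings showing that $s=1$ leads to an acyclic extension of $\phi$ to $G$, a contradiction, so the counting argument only has to run for $s\geq 2$; your plan treats all $s$ uniformly. Second, for the ``furthermore'' clause your description (``attempt a recoloring that reduces the forced degree of $u$'') is too vague to assess: in the paper this clause requires tracking, in the equality case, exactly which colors of $\mathcal{U}(w)$ appear only once in $\mathbb{S}$, and then exhibiting for each neighbor $w_{i}$ of $w$ with $i\geq 3$ a chain of specific critical paths forcing $\deg_{G}(w_{i})\geq 6$. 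These are not details one can defer; they are the substance of the proof.
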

\begin{proof}%
We may assume that
\begin{enumerate}[label= ($\ast$)]%
\item The graph $G -ww_{0}$ admits an acyclic edge coloring $\phi$ such that the number of common colors at $w$ and $w_{0}$ is minimum.
\end{enumerate}

Here, (a) and (b) will be proved together, so we may assume that $\deg_{G}(w) \leq \kappa - \Delta + 1$. Since $\deg_{G}(w) + \deg_{G}(w_{0}) \leq \kappa - \Delta + 5 < \kappa + 2$, we have that $|\Upsilon(ww_{0}) \cap \Upsilon(w_{0}w)| = m \geq 1$. It follows that $|C(ww_{0})| = \kappa - (\deg_{G}(w) + \deg_{G}(w_{0}) - m - 2) \geq \Delta - 2$. Without loss of generality, let $N_{G}(w) = \{w_{0}, w_{1}, w_{2}, \dots\}$ and $\phi(ww_{i}) = i$ for $1 \leq i \leq \deg_{G}(w)-1$. Let $\mathbb{S} = \Upsilon(w_{0}v_{1}) \uplus \Upsilon(w_{0}v_{2}) \uplus \Upsilon(w_{0}v_{3})$.

\begin{claim}\label{WV-Path}%
For every color $\theta$ in $C(ww_{0})$, there exists a $(\lambda, \theta, w_{0}, w)$-critical path for some $\lambda \in \Upsilon(ww_{0}) \cap \Upsilon(w_{0}w)$. Consequently, every color in $C(ww_{0})$ appears in $\mathbb{S}$.
\end{claim}

\begin{case}%
$\mathcal{U}(w) \cap \mathcal{U}(w_{0}) = \{\lambda\}$.
\end{case}
It follows that $|C(ww_{0})| = \kappa - (\deg_{G}(w) + \deg_{G}(w_{0}) - 3)$.
\begin{enumerate}[label = (\alph*)]%
\item Suppose that $\deg_{G}(w) + \deg_{G}(w_{0}) \leq \kappa - \Delta + 4$. It follows that $|C(ww_{0})| \geq \Delta - 1$. Without loss of generality, let $\phi(w_{0}v_{1}) = 1, \phi(w_{0}v_{2}) = \kappa - \Delta$ and $\phi(w_{0}v_{3}) = \kappa - \Delta + 1$. By \autoref{WV-Path}, there exists a $(1, \theta, w_{0}, w)$-critical path for every $\theta$ in $C(ww_{0})$. Hence, we have that $\deg_{G}(w) = \kappa - \Delta$ and $\deg_{G}(v_{1}) = \deg_{G}(w_{1}) = \Delta$ and $\Upsilon(w_{0}v_{1}) = \Upsilon(ww_{1}) = \{\kappa - \Delta + 2, \dots, \kappa\}$. Notice that $\deg_{G}(w) = \kappa - \Delta \geq 3$ results from \autoref{2++edge}. Reassigning $\kappa - \Delta, 1$ and $2$ to $ww_{1}, ww_{0}$ and $w_{0}v_{1}$ respectively, and we obtain an acyclic edge coloring of $G$, a contradiction.

\item Suppose that $\deg_{G}(w) + \deg_{G}(w_{0}) = \kappa - \Delta + 5$ and $ww_{0}$ is contained in two triangles $ww_{1}w_{0}$ and $ww_{2}w_{0}$ ($w_{1} = v_{1}$ and $w_{2} = v_{2}$).
\end{enumerate}

\begin{subcase}\label{2**}%
The common color $\lambda$ does not appear on $w_{0}v_{3}$, but it appears on $ww_{1}$ or $ww_{2}$.
\end{subcase}
By symmetry, assume that $\phi(w_{0}w_{1}) = 2$, $\phi(w_{0}v_{2}) = \kappa - \Delta + 1$, $\phi(w_{0}v_{3}) = \kappa - \Delta + 2$. By \autoref{WV-Path}, we have that $\{\kappa - \Delta +3, \dots, \kappa\}\subseteq \Upsilon(w_{0}w_{1})\cap \Upsilon(ww_{2})$ and $\deg_{G}(w_{1}) = \deg_{G}(w_{2}) = \Delta$. Now, reassigning $\kappa - \Delta + 1$ to $w_{0}w$ and reassigning $3$ to $w_{0}w_{2}$ results in an acyclic edge coloring of $G$, a contradiction.

\begin{subcase}%
The common color $\lambda$ does not appear on $w_{0}v_{3}$ and it does not appear on $ww_{1}$ or $ww_{2}$ either.
\end{subcase}
 By symmetry, assume that $\phi(w_{0}w_{1}) = 3$, $\phi(w_{0}w_{2}) = \kappa - \Delta + 1$, $\phi(w_{0}v_{3}) = \kappa - \Delta + 2$. By \autoref{WV-Path}, we have that $\{\kappa - \Delta + 3, \dots, \kappa\} \subseteq \Upsilon(w_{0}w_{1})\cap \Upsilon(ww_{3})$, $\deg_{G}(w_{1}) = \Delta$ and $\deg_{G}(w_{3}) \geq \Delta - 1$. Reassigning $2$ to $w_{0}w_{1}$ will take us back to Subcase~\ref{2**}.

\begin{subcase}\label{**2}%
The common color $\lambda$ appears on $w_{0}v_{3}$ and it also appears on $ww_{1}$ or $ww_{2}$.
\end{subcase}
By symmetry, assume that $\phi(w_{0}w_{1}) = \kappa - \Delta + 1$, $\phi(w_{0}w_{2}) = \kappa - \Delta + 2$, $\phi(w_{0}v_{3}) = 2$. By \autoref{WV-Path}, we have that $\{\kappa - \Delta + 3, \dots, \kappa\}\subseteq \Upsilon(ww_{2})\cap \Upsilon(w_{0}v_{3})$, $\deg_{G}(w_{2}) = \Delta$ and $\deg_{G}(v_{3}) \geq \Delta - 1$. Now, reassigning $\kappa - \Delta + 1$ to $ww_{2}$ will take us back to Subcase~\ref{2**}.

\begin{subcase}%
The common color $\lambda$ appears on $w_{0}v_{3}$, but it does not appear on $ww_{1}$ or $ww_{2}$.
\end{subcase}
By symmetry, assume that $\phi(w_{0}w_{1}) = \kappa - \Delta + 1$, $\phi(w_{0}w_{2}) = \kappa - \Delta + 2$, $\phi(w_{0}v_{3}) = 3$. By \autoref{WV-Path}, we have that $\{\kappa - \Delta + 3, \dots, \kappa\} \subseteq \Upsilon(ww_{3})\cap \Upsilon(w_{0}v_{3})$, $\deg_{G}(w_{3}) \geq \Delta - 1$ and $\deg_{G}(v_{3}) \geq \Delta - 1$. If $\{2, \kappa - \Delta + 1\} \cap \Upsilon(w_{0}v_{3}) = \emptyset$, then reassigning $2$ to $w_{0}v_{3}$ will take us back to Subcase~\ref{**2}. So we may assume that  $\{2, \kappa - \Delta + 1\} \cap \Upsilon(w_{0}v_{3}) \neq\emptyset$. But we can still reassign $1$ to $w_{0}v_{3}$ and go back to Subcase~\ref{**2}.

\begin{case}%
$\mathcal{U}(w) \cap \mathcal{U}(w_{0}) = \{\lambda_{1}, \dots, \lambda_{m}\}$ and $m \geq 2$.
\end{case}
Let $\mathcal{A}(v_{1}) = C(ww_{0}) \setminus \Upsilon(w_{0}v_{1}) = \{\alpha_{1}, \alpha_{2}, \dots\}$, $\mathcal{A}(v_{2}) = C(ww_{0}) \setminus \Upsilon(w_{0}v_{2}) =  \{\beta_{1}, \beta_{2}, \dots\}$ and $\mathcal{A}(v_{3}) = C(ww_{0}) \setminus \Upsilon(w_{0}v_{3})$.

\begin{claim}\label{NoemptyA}%
$\mathcal{A}(v_{1}), \mathcal{A}(v_{2}), \mathcal{A}(v_{3}) \neq \emptyset$.
\end{claim}
\begin{proof}
Suppose to the contrary that $\mathcal{A}(v_{*}) = \emptyset$. It follows that $\Delta -1 \geq  |\Upsilon(w_{0}v_{*})| \geq |C(ww_{0})| = \kappa - (\deg_{G}(w) + \deg_{G}(w_{0}) - m - 2) \geq \kappa - (\kappa - \Delta + 5 - 2 - 2) = \Delta - 1$, thus $\deg_{G}(w) + \deg_{G}(w_{0}) = \kappa - \Delta + 5$, $m = 2$ and $\Upsilon(w_{0}v_{*}) = C(ww_{0})$ with $|\Upsilon(w_{0}v_{*})| = \Delta - 1$. This implies that the graph $G$ satisfies the condition (b) with $v_{*} = v_{3}$ (assume that $w_{1} = v_{1}$ and $w_{2} = v_{2}$). We may assume that $\mathcal{U}(w_{0}) = \{\lambda_{1}, \lambda_{2}, \kappa - \Delta + 1\}$.

If the color on $w_{0}w_{1}$ is $\lambda_{1}$ and the color on $w_{0}w_{2}$ is $\lambda_{2}$, then reassigning $\alpha_{1}, \beta_{1}$ and $\lambda_{2}$ to $ww_{0}, w_{0}w_{2}$ and $w_{0}v_{3}$, respectively, yields an acyclic edge coloring of $G$.

But if the color on $w_{0}w_{1}$ is $\kappa - \Delta + 1$ and the color on $w_{0}w_{2}$ is $\lambda_{2}$, then reassigning $2$ to $w_{0}v_{3}$ and $\beta_{1}$ to $ww_{0}$ results in an acyclic edge coloring of $G$. 
\end{proof}

\begin{claim}
Every color in $C(ww_{0})$ appears at least twice in $\mathbb{S}$.
\end{claim}
\begin{proof}%
Suppose that there exists a color $\alpha$ in $C(ww_{0})$ appearing only once in $\mathbb{S}$, say $\alpha \in \Upsilon(w_{0}v_{1})$. Without loss of generality, assume that $\phi(w_{0}v_{1}) = \lambda_{1}$ and $\phi(w_{0}v_{2}) = \lambda_{2}$. By \autoref{WV-Path}, there exists a $(\lambda_{1}, \alpha, w_{0}, w)$-critical path. Reassigning $\alpha$ to $w_{0}v_{2}$ results in a new acyclic edge coloring $\phi^{*}$ of $G - ww_{0}$ with $|\mathcal{U}_{\phi^{*}}(w) \cap \mathcal{U}_{\phi^{*}}(w_{0})| < |\mathcal{U}(w) \cap \mathcal{U}(w_{0})|$, which contradicts the assumption ($\ast$).
\end{proof}

Let $X = \{\,\alpha \mid \alpha \in C(ww_{0}) \mbox{ and } \mul_{\mathbb{S}}(\alpha) = 3\,\}$.
\begin{align*}%
&\sum_{x \in N_{G}(w_{0})} \deg_{G}(x)\\
&= \deg_{G}(w_{0}) + \deg_{G}(w) -1 + \sum_{\alpha \in [\kappa]} \mul_{\mathbb{S}}(\alpha)\\
&= \deg_{G}(w_{0}) + \deg_{G}(w) -1 + \sum_{\alpha \in C(ww_{0})} \mul_{\mathbb{S}}(\alpha) + \sum_{\alpha \in \mathcal{U}(w) \cup \mathcal{U}(w_{0})} \mul_{\mathbb{S}}(\alpha) \\
&= \deg_{G}(w_{0}) + \deg_{G}(w) - 1 + 2|C(ww_{0})| + |X| + \sum_{\alpha \in \mathcal{U}(w) \cup \mathcal{U}(w_{0})} \mul_{\mathbb{S}}(\alpha)\\
&= \deg_{G}(w_{0}) + \deg_{G}(w) - 1 + 2(\kappa - (\deg_{G}(w) + \deg_{G}(w_{0}) - 2 - m)) + |X| + \sum_{\alpha \in \mathcal{U}(w) \cup \mathcal{U}(w_{0})} \mul_{\mathbb{S}}(\alpha)\\
&= 2\kappa - \deg_{G}(w_{0}) - \deg_{G}(w) + 2m + 3 + |X| + \sum_{\alpha \in \mathcal{U}(w) \cup \mathcal{U}(w_{0})} \mul_{\mathbb{S}}(\alpha)
\end{align*}
It is sufficient to prove that

\begin{numcases}{\sum_{\alpha \in \mathcal{U}(w) \cup \mathcal{U}(w_{0})} \mul_{\mathbb{S}}(\alpha) + |X| \geq}
\deg_{G}(w) - 2m + 5, & if $\deg_{G}(w) \leq \kappa - \Delta$;\\
\deg_{G}(w) - 2m + 6, & if $\deg_{G}(w) \leq \kappa - \Delta + 1$ and $ww_{0}$ is contained in two triangles.\label{EQ**}
\end{numcases}

\begin{subcase}%
$\mathcal{U}(w) \cap \mathcal{U}(w_{0}) = \{\lambda_{1}, \lambda_{2}\}$.
\end{subcase}

\begin{claim}\label{VWS}
Every color in $\mathcal{U}(w)$ is in $\mathbb{S}$.
\end{claim}
\begin{proof}%

Assume that $w_{0}v_{1}$ is colored with $\lambda_{1}$ and $w_{0}v_{2}$ is colored with $\lambda_{2}$. Notice that $C(ww_{0}) \subseteq \Upsilon(w_{0}v_{1}) \cup \Upsilon(w_{0}v_{2})$ and $\mathcal{A}(v_{1}) \cap \mathcal{A}(v_{2}) = \emptyset$. By \autoref{NoemptyA}, we have that $\mathcal{A}(v_{1}), \mathcal{A}(v_{2}), \mathcal{A}(v_{3}) \neq \emptyset$. If $\lambda_{1} \notin \mathbb{S}$, then reassigning $\beta_{1}, \alpha_{1}$ and $\lambda_{1}$ to $w_{0}w, w_{0}v_{1}$ and $w_{0}v_{3}$ respectively, results in an acyclic edge coloring of $G$, a contradiction. Thus, we have that $\lambda_{1} \in \mathbb{S}$. Similarly, we can prove that $\lambda_{2} \in \mathbb{S}$. Let $\tau$ be an arbitrary color in $\mathcal{U}(w) \setminus (\mathbb{S} \cup \{\lambda_{1}, \lambda_{2}\})$. Let $\sigma$ be obtained from $\phi$ by reassigning $\tau$ to $w_{0}v_{1}$. It is obvious that $\sigma$ is an acyclic edge coloring of $G - ww_{0}$. So we can obtain a similar contradiction by replacing $\phi$ with $\sigma$. 
\end{proof}

\begin{claim}\label{Claim5}%
The color in $\mathcal{U}(w_{0}) \setminus \{\lambda_{1}, \lambda_{2}\}$ appears at least twice in $\mathbb{S}$.
\end{claim}
\begin{proof}%
Suppose that $\lambda_{1}, \lambda_{2}$ and $\lambda^{*}$ are on the edges $w_{0}v_{1}, w_{0}v_{2}$ and $w_{0}v_{3}$, respectively. There exists a $(\lambda^{*}, \alpha_{1}, w_{0}, v_{1})$-critical path; otherwise, reassigning $\alpha_{1}$ to $w_{0}v_{1}$ will take us back to Case \ref{1Common}. Hence, we have $\lambda^{*} \in \Upsilon(w_{0}v_{1})$. Similarly, there exists a $(\lambda^{*}, \beta_{1}, w_{0}, v_{2})$-critical path and $\lambda^{*} \in \Upsilon(w_{0}v_{2})$. Therefore, the color $\lambda^{*}$ appears exactly twice in $\mathbb{S}$.
\end{proof}

Now, we have
\[
\sum_{\alpha \in \mathcal{U}(w) \cup \mathcal{U}(w_{0})} \mul_{\mathbb{S}}(\alpha) + |X| \geq |\mathcal{U}(w)| + 2 + |X| = \deg_{G}(w) + 1 + |X|.
\]

So conclusion (a) holds. Now, suppose that $\deg_{G}(w) + \deg_{G}(w_{0}) \leq \kappa - \Delta + 5$ and $ww_{0}$ is contained in two triangles $ww_{0}w_{1}$ and $ww_{0}w_{2}$ ($w_{1} = v_{1}$ and $w_{2} = v_{2}$).

\begin{subsubcase}\label{721}%
The two common colors $\lambda_{1}$ and $\lambda_{2}$ are on $w_{1}w$ and $w_{1}w_{0}$.
\end{subsubcase}
There exists a $(\lambda_{1}, \alpha, w_{0}, w)$- or $(\lambda_{2}, \alpha, w_{0}, w)$-critical path for $\alpha \in C(ww_{0})$. Hence, we have that $C(ww_{0}) \subseteq \mathcal{U}(w_{1})$, and thus $\deg_{G}(w_{1}) \geq |C(ww_{0})| + |\{\lambda_{1}, \lambda_{2}\}| \geq \Delta + 1$, a contradiction.

\begin{subsubcase}\label{712}%
The two common colors $\lambda_{1}$ and $\lambda_{2}$ are on $w_{2}w$ and $w_{2}w_{0}$.
\end{subsubcase}

This is similar with Subcase~\ref{721}.

\begin{subsubcase}\label{713}%
$\{\lambda_{1}, \lambda_{2}\} \cap \{1, 2\} = \{\lambda_{1}\}$ and $\lambda_{1}$ appears on $w_{0}w_{1}$ or $w_{0}w_{2}$.
\end{subsubcase}
Without loss of generality, assume that $\phi(w_{0}w_{1})=\kappa - \Delta + 1$, $\phi(w_{0}w_{2})=1$, $\phi(w_{0}v_{3})=3$.
If $2 \notin \Upsilon(w_{0}w_{1}) \cup \Upsilon(w_{0}v_{3})$, then reassigning $2$ to $w_{0}v_{3}$ will take us back to Subcase~\ref{712}. Hence, $2 \in \Upsilon(w_{0}w_{1}) \cup \Upsilon(w_{0}v_{3})$ and $2$ appears at least twice in $\mathbb{S}$. Therefore, we have

\[
\sum_{\alpha \in \mathcal{U}(w) \cup \mathcal{U}(w_{0})} \mul_{\mathbb{S}}(\alpha) + |X| \geq |\mathcal{U}(w)| + 2 + |X| + |\{2\}| \geq \deg_{G}(w) + 2.
\]

Suppose that
\[
\sum_{x \in N_{G}(w_{0})} \deg_{G}(x) = 2\kappa - \deg_{G}(w_{0}) + 9.
\]
It follows that
\[
\sum_{\alpha \in \mathcal{U}(w) \cup \mathcal{U}(w_{0})} \mul_{\mathbb{S}}(\alpha) + |X| = |\mathcal{U}(w)| + 2 + |X| + |\{2\}| = \deg_{G}(w) + 2,
\]
and every color in $\mathcal{U}(w) \setminus \{2\}$ appears only once in $\mathbb{S}$.

There exists a $(3, \kappa - \Delta + 1, w_{0}, w)$-critical path, otherwise, reassigning $\kappa - \Delta + 1$ to $w_{0}w$ and $\alpha_{1}$ to $w_{0}w_{1}$ results in an acyclic edge coloring of $G$, a contradiction. By \autoref{Claim5}, we have that $\kappa - \Delta + 1 \in \Upsilon(w_{0}w_{2}) \cap \Upsilon(w_{0}v_{3})$. And by \autoref{VWS}, we have that $3 \in \Upsilon(w_{0}w_{1}) \cup \Upsilon(w_{0}w_{2})$. Since $|C(ww_{0})| \geq \Delta - 1$ and $\{1, 2, 3, \kappa - \Delta + 1\} \subseteq \Upsilon(w_{0}w_{1}) \cup \Upsilon(w_{0}w_{2})$, this implies that $|\mathcal{A}(w_{1})| + |\mathcal{A}(w_{2})| \geq 4$. There exists no $(1, \alpha, w, w_{0})$-critical path for every $\alpha \in \mathcal{A}(w_{1}) \cup \mathcal{A}(w_{2})$, thus there exists a $(3, \alpha, w, w_{0})$-critical path, and then $\mathcal{A}(w_{1}) \cup \mathcal{A}(w_{2}) \subseteq \Upsilon(ww_{3})$. Hence, $\deg_{G}(w_{3}) \geq |\mathcal{A}(w_{1})| + |\mathcal{A}(w_{2})| + |\{3, \kappa - \Delta + 1\}| \geq 6$.

Suppose that $4 \notin \Upsilon(w_{0}v_{3})$ and there exists no $(\kappa - \Delta + 1, 4, w_{0}, v_{3})$-critical path. Reassigning $4$ to $w_{0}v_{3}$ results in a new acyclic edge coloring $\varrho_{1}$ of $G - ww_{0}$. Similarly, we can prove $\deg_{G}(w_{4}) \geq 6$ by replacing $\phi$ with $\varrho_{1}$.

Suppose that $4 \in \Upsilon(w_{0}v_{3})$. This implies that $\{1, 2, 4, \kappa - \Delta + 1\} \subseteq \Upsilon(w_{0}w_{1}) \cup \Upsilon(w_{0}v_{3})$ and $|\mathcal{A}(w_{1})| + |\mathcal{A}(v_{3})| \geq 4$. Reassigning $4$ to $w_{0}w_{2}$ and reassigning $1$ to $w_{0}v_{3}$ results in another acyclic edge coloring $\pi$ of $G - ww_{0}$. Hence, there exists a $(4, \alpha, w_{0}, w)$-critical path with respect to $\pi$ for $\alpha \in \mathcal{A}(w_{1}) \cup \mathcal{A}(v_{3})$,  and then $\mathcal{A}(w_{1}) \cup \mathcal{A}(v_{3}) \subseteq \Upsilon(ww_{4})$. Similarly as above, there exists a $(4, \kappa - \Delta + 1, w_{0}, w)$-critical path with respect to $\pi$. Hence, $\deg_{G}(w_{4}) \geq |\mathcal{A}(w_{1})| + |\mathcal{A}(v_{3})| + |\{4, \kappa - \Delta + 1\}| \geq 6$.

Suppose that there exists a $(\kappa - \Delta + 1, 4, w_{0}, v_{3})$-critical path and $4 \in \Upsilon(w_{0}w_{1})$. This implies that $\{1, 2, 4, \kappa - \Delta + 1\} \subseteq \Upsilon(w_{0}w_{1}) \cup \Upsilon(w_{0}v_{3})$ and $|\mathcal{A}(w_{1})| + |\mathcal{A}(v_{3})| \geq 4$. Reassigning $4$ to $w_{0}w_{2}$ and reassigning $1$ to $w_{0}v_{3}$ results in another acyclic edge coloring $\varrho_{2}$ of $G - ww_{0}$. Similarly as above, we can prove that $\deg_{G}(w_{4}) \geq 6$.

In one word, the degree of $w_{4}$ is at least six. By symmetry, we have that $\deg_{G}(w_{i}) \geq 6$ for $4 \leq i \leq \deg_{G}(w) - 1$.

\begin{subsubcase}\label{731}%
$\{\lambda_{1}, \lambda_{2}\} \cap \{1, 2\} = \{\lambda_{1}\}$ and $\lambda_{1}$ appears on $w_{0}v_{3}$.
\end{subsubcase}
Without loss of generality, assume that $\phi(w_{0}w_{1}) = \kappa - \Delta + 1$, $\phi(w_{0}w_{2}) = 3$, $\phi(w_{0}v_{3}) = 1$. If $2 \notin \Upsilon(w_{0}w_{1}) \cup \Upsilon(w_{0}v_{3})$, then reassigning $2$ to $w_{0}w_{1}$ and reassigning $\beta_{1}$ to $w_{0}w_{2}$ will take us back to Subcase~\ref{721}. Hence, $2 \in \Upsilon(w_{0}w_{1}) \cup \Upsilon(w_{0}v_{3})$ and $2$ appears at least twice in $\mathbb{S}$. Therefore, we have

\[
\sum_{\alpha \in \mathcal{U}(w) \cup \mathcal{U}(w_{0})} \mul_{\mathbb{S}}(\alpha) + |X| \geq |\mathcal{U}(w)| + 2 + |X| + |\{2\}| \geq \deg_{G}(w) + 2.
\]

Suppose that
\[
\sum_{x \in N_{G}(w_{0})} \deg_{G}(x) = 2\kappa - \deg_{G}(w_{0}) + 9.
\]
It follows that
\[
\sum_{\alpha \in \mathcal{U}(w) \cup \mathcal{U}(w_{0})} \mul_{\mathbb{S}}(\alpha) + |X| = |\mathcal{U}(w)| + 2 + |X| + |\{2\}| = \deg_{G}(w) + 2,
\]
and every color in $\mathcal{U}(w) \setminus \{2\}$ appears only once in $\mathbb{S}$.

There exists a $(3, \kappa - \Delta + 1, w_{0}, w)$-critical path, otherwise, reassigning $\kappa - \Delta + 1$ to $w_{0}w$ and $\alpha_{1}$ to $w_{0}w_{1}$ results in an acyclic edge coloring of $G$, a contradiction. Since $|C(ww_{0})| \geq \Delta - 1$ and $\{1, 2, 3, \kappa - \Delta + 1\} \subseteq \Upsilon(w_{0}w_{1}) \cup \Upsilon(w_{0}v_{3})$, this implies that $|\mathcal{A}(w_{1})| + |\mathcal{A}(v_{3})| \geq 4$. There exists no $(1, \alpha, w, w_{0})$-critical path for every $\alpha \in \mathcal{A}(w_{1}) \cup \mathcal{A}(v_{3})$, thus there exists a $(3, \alpha, w, w_{0})$-critical path, and then $\mathcal{A}(w_{1}) \cup \mathcal{A}(v_{3}) \subseteq \Upsilon(ww_{3})$. Hence, $\deg_{G}(w_{3}) \geq |\mathcal{A}(w_{1})| + |\mathcal{A}(v_{3})| + |\{3, \kappa - \Delta + 1\}| \geq 6$.

Suppose that $4 \notin \Upsilon(w_{0}w_{2})$ and there exists no $(\kappa - \Delta + 1, 4, w_{0}, w_{2})$-critical path. Reassigning $4$ to $w_{0}w_{2}$ results in a new acyclic edge coloring $\varrho_{3}$ of $G - ww_{0}$. Similarly, we can prove $\deg_{G}(w_{4}) \geq 6$ by replacing $\phi$ with $\varrho_{3}$.

If $4 \in \Upsilon(w_{0}w_{2})$, then reassigning $1$ to $w_{0}w_{2}$ and reassigning $4$ to $w_{0}v_{3}$ will take us back to Subcase~\ref{713}. If there exists a $(\kappa - \Delta + 1, 4, w_{0}, w_{2})$-critical path and $4 \in \Upsilon(w_{0}w_{1})$, then reassigning $1$ to $w_{0}w_{2}$ and $4$ to $w_{0}v_{3}$ will take us back to Subcase~\ref{713} again.

Hence, we have that $\deg_{G}(w_{4}) \geq 6$. By symmetry, we also have that $\deg_{G}(w_{i}) \geq 6$ for $4 \leq i \leq \deg_{G}(w) - 1$.
\begin{subsubcase}%
$\{\lambda_{1}, \lambda_{2}\} \cap \{1, 2\} = \emptyset$ and the color on $w_{0}v_{3}$ is a common color.
\end{subsubcase}
Without loss of generality, assume that $\phi(w_{0}w_{1})=\kappa - \Delta + 1$, $\phi(w_{0}w_{2}) = 3$, $\phi(w_{0}v_{3}) = 4$. If $1 \notin \Upsilon(w_{0}w_{2}) \cup \Upsilon(w_{0}v_{3})$, then reassigning $1$ to $w_{0}w_{2}$ will take us back to Subcase~\ref{713}. Hence, $1 \in \Upsilon(w_{0}w_{2}) \cup \Upsilon(w_{0}v_{3})$ and $1$ appears at least twice in $\mathbb{S}$. If $2 \notin \Upsilon(w_{0}w_{1}) \cup \Upsilon(w_{0}v_{3})$, then reassigning $2$ to $w_{0}w_{1}$ and $\beta_{1}$ to $w_{0}w_{2}$ will take us back to Subcase~\ref{713}. Therefore, we have

\[
\sum_{\alpha \in \mathcal{U}(w) \cup \mathcal{U}(w_{0})} \mul_{\mathbb{S}}(\alpha) + |X| \geq |\mathcal{U}(w)| + 2 + |X| + |\{1, 2\}| \geq \deg_{G}(w) + 3.
\]
\begin{subsubcase}%
$\{\lambda_{1}, \lambda_{2}\} \cap \{1, 2\} = \emptyset$ and the color on $w_{0}v_{3}$ is not a common color.
\end{subsubcase}
Without loss of generality, assume that $\phi(w_{0}w_{1}) = 3$, $\phi(w_{0}w_{2}) = 4$, $\phi(w_{0}v_{3}) = \kappa - \Delta + 1$.

Suppose that $1 \notin \Upsilon(w_{0}w_{2}) \cup \Upsilon(w_{0}v_{3})$. Thus, there exists a $(3, 1, w_{0}, w_{2})$-critical path; otherwise, reassigning $1$ to $w_{0}w_{2}$ and $\alpha_{1}$ to $ww_{0}$ results in an acyclic edge coloring of $G$. But reassigning $\alpha_{1}, \beta_{1}$ and $1$ to $ww_{0}, w_{0}w_{2}$ and $w_{0}v_{3}$ respectively, yields an acyclic edge coloring of $G$. Hence, $1 \in \Upsilon(w_{0}w_{2}) \cup \Upsilon(w_{0}v_{3})$ and $1$ appears at least twice in $\mathbb{S}$. Similarly, we have that $2 \in \Upsilon(w_{0}w_{1}) \cup \Upsilon(w_{0}v_{3})$. Therefore, we have

\[
\sum_{\alpha \in \mathcal{U}(w) \cup \mathcal{U}(w_{0})} \mul_{\mathbb{S}}(\alpha) + |X| \geq |\mathcal{U}(w)| + 2 + |X| + |\{1, 2\}| \geq \deg_{G}(w) + 3.
\]

\begin{subcase}%
$|\mathcal{U}(w) \cap \mathcal{U}(w_{0})| = 3$.
\end{subcase}

\begin{claim}%
Every color in $\mathcal{U}(w)$ is in $\mathbb{S}$.
\end{claim}
\begin{proof}%
Assume that $w_{0}v_{1}, w_{0}v_{2}$ and $w_{0}v_{3}$ are colored with $\lambda_{1}, \lambda_{2}$ and $\lambda_{3}$, respectively. Suppose that $\lambda_{1} \notin \mathbb{S}$. If there is no $(\lambda_{2}, \alpha_{1}, w_{0}, v_{1})$-critical path, then reassigning $\alpha_{1}$ and $\lambda_{1}$ to $w_{0}v_{1}$ and $w_{0}v_{3}$ respectively, results in a new acyclic edge coloring of $G - ww_{0}$, which contradicts ($\ast$). Hence, there exists a $(\lambda_{2}, \alpha_{1}, w_{0}, v_{1})$-critical path, and hence there exists a $(\lambda_{3}, \alpha_{1}, w_{0}, w)$-critical path. But reassigning $\alpha_{1}$ and $\lambda_{1}$ to $w_{0}v_{1}$ and $w_{0}v_{2}$, yields another acyclic edge coloring of $G - ww_{0}$, which contradicts ($\ast$).

Hence, we have that $\lambda_{1} \in \mathbb{S}$. By symmetry, we have that $\{\lambda_{1}, \lambda_{2}, \lambda_{3}\} \subseteq \mathbb{S}$. Let $\tau$ be an arbitrary color in $\mathcal{U}(w) \setminus (\mathbb{S} \cup \{\lambda_{1}, \lambda_{2}, \lambda_{3}\})$. Let $\sigma$ be obtained from $\phi$ by reassigning $\tau$ to $w_{0}v_{1}$. It is obvious that $\sigma$ is an acyclic edge coloring of $G - ww_{0}$. So we can obtain a similar contradiction by replacing $\phi$ with $\sigma$. So we conclude that $\mathcal{U}(w) \subseteq \mathbb{S}$.
\end{proof}

\[
\sum_{\theta \in \mathcal{U}(w) \cup \mathcal{U}(w_{0})} \mul_{\mathbb{S}}(\theta) + |X| \geq |\mathcal{U}(w)| = \deg_{G}(w) - 1,
\]

In the following discussion, suppose that $\deg_{G}(w) + \deg_{G}(w_{0}) \leq \kappa - \Delta + 5$ and $ww_{0}$ is contained in two triangles $ww_{0}w_{1}$ and $ww_{0}w_{2}$ ($w_{1} = v_{1}$ and $w_{2} = v_{2}$).

\begin{subsubcase}%
$\mathcal{U}(w_{0}) \cap \{1, 2\} = \{1, 2\}$.
\end{subsubcase}
By symmetry, assume that $\phi(w_{0}w_{1}) =  3, \phi(w_{0}w_{2}) = 1, \phi(w_{0}v_{3}) = 2$. Since $\alpha_{1} \notin \mathcal{U}(w_{1})$, it follows that there exists a $(2, \alpha_{1}, w_{0}, w)$-critical path. Reassigning $\alpha_{1}$ to $w_{0}w_{1}$ will take us back to Subcase~\ref{712}.

\begin{subsubcase}\label{315}%
$\mathcal{U}(w_{0}) \cap \{1, 2\} = \{\lambda^{*}\}$ and $\lambda^{*}$ is not on $w_{0}v_{3}$.
\end{subsubcase}
By symmetry, assume that $\phi(w_{0}w_{1}) =  3, \phi(w_{0}w_{2}) = 1, \phi(w_{0}v_{3}) = 5$. Since $\alpha_{1} \notin \mathcal{U}(w_{1})$, it follows that there exists a $(5, \alpha_{1}, w_{0}, w)$-critical path. Reassigning $\alpha_{1}$ to $w_{0}w_{1}$ will take us back to Subcase~\ref{713}.

\begin{subsubcase}\label{341}%
$\mathcal{U}(w_{0}) \cap \{1, 2\} = \{\lambda^{*}\}$ and $\lambda^{*}$ is on $w_{0}v_{3}$.
\end{subsubcase}
By symmetry, assume that $\phi(w_{0}w_{1}) =  3, \phi(w_{0}w_{2}) = 4, \phi(w_{0}v_{3}) = 1$. Since $\alpha_{1} \notin \mathcal{U}(w_{1})$, it follows that there exists a $(4, \alpha_{1}, w_{0}, w)$-critical path. Reassigning $\alpha_{1}$ to $w_{0}w_{1}$ will take us back to Subcase~\ref{731}.

\begin{subsubcase}%
$\mathcal{U}(w_{0}) \cap \{1, 2\} = \emptyset$.
\end{subsubcase}
By symmetry, assume that $\phi(w_{0}w_{1}) =  3, \phi(w_{0}w_{2}) = 4, \phi(w_{0}v_{3}) = 5$. Suppose that $1$ only appears once in $\mathbb{S}$. Reassigning $1$ to $w_{0}w_{2}$ will create a (3, 1)-dichromatic cycle containing $w_{0}w_{2}$, for otherwise, we go back to Subcase~\ref{315}. But Reassigning $1$ to $w_{0}v_{3}$ will take us back to Subcase~\ref{341}. Hence, the color $1$ appears at least twice in $\mathbb{S}$. Similarly, the color $2$ appears at least twice in $\mathbb{S}$. Hence, we have

\[
\sum_{\theta \in \mathcal{U}(w) \cup \mathcal{U}(w_{0})} \mul_{\mathbb{S}}(\theta) + |X| \geq \deg_{G}(w) - 1 + |\{1, 2\}| = \deg_{G}(w) + 1. \qedhere
\]
\resetcounter
\end{proof}

\subsection{Local structure on $5$-vertices}

\begin{lemma}\label{5Sum}
Let $G$ be a $\kappa$-deletion-minimal graph with $\kappa \geq \Delta + 5$ and let $u$ be a $5$-vertex.
\begin{enumerate}[label = (\alph*)]
\item\label{5Suma} If $u$ is contained in a triangle $wuw_{1}w$ with $\deg_{G}(w) \leq \kappa - \Delta$ and $\deg_{G}(w_{1}) \leq 6$, then
\begin{equation}\label{EQ3}
\sum_{x \in N_{G}(u)} \deg_{G}(x) \geq 2\kappa - \deg_{G}(u) + 12 = 2\kappa + 7.
\end{equation}
\item\label{5Sumb} If $u$ is contained in a triangle $wuw_{1}w$ with $\deg_{G}(w) \leq \kappa - \Delta - 1$ and $\deg_{G}(w_{1}) \leq 7$, then
\begin{equation}\label{EQ4}
\sum_{x \in N_{G}(u)} \deg_{G}(x) \geq 2\kappa - \deg_{G}(u) + 12 = 2\kappa + 7.
\end{equation}
\end{enumerate}
\end{lemma}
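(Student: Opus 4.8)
The plan is to delete the edge $uw$ and count the colours seen by the neighbours of $u$, exactly in the spirit of the proof of \autoref{4Sum}. Since $G$ is $\kappa$-deletion-minimal, $G - uw$ admits an acyclic edge colouring; among all of these choose $\phi$ minimizing $m := |\mathcal{U}(u) \cap \mathcal{U}(w)|$. Write $N_{G}(u) = \{w, w_{1}, u_{1}, u_{2}, u_{3}\}$ and put $\mathbb{S} = \Upsilon(uw_{1}) \uplus \Upsilon(uu_{1}) \uplus \Upsilon(uu_{2}) \uplus \Upsilon(uu_{3})$. Counting on the edges incident with the neighbours of $u$ gives the identity
\[
\sum_{x \in N_{G}(u)} \deg_{G}(x) = \deg_{G}(u) + \deg_{G}(w) - 1 + \sum_{\alpha \in [\kappa]} \mul_{\mathbb{S}}(\alpha).
\]
In both parts $\deg_{G}(u) + \deg_{G}(w) \leq \kappa - \Delta + 5 < \kappa + 2$ (note $\Delta \geq 5$ since $u$ is a $5$-vertex), so Fact~\ref{Fact2} yields $m \geq 1$ and $|C(uw)| = \kappa - (\deg_{G}(u) + \deg_{G}(w) - m - 2)$. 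By \autoref{2++edge} and \autoref{3+vertex} (with $\kappa - \Delta \geq 5$) one also gets $\deg_{G}(w) \geq 4$. Substituting the value of $|C(uw)|$, using $\deg_{G}(u) = 5$, and recording that for $m \geq 2$ every colour of $C(uw)$ occurs at least twice in $\mathbb{S}$, both \eqref{EQ3} and \eqref{EQ4} reduce to the single inequality
\begin{equation*}
E + \sum_{\alpha \in \mathcal{U}(u) \cup \mathcal{U}(w)} \mul_{\mathbb{S}}(\alpha) \geq \deg_{G}(w) - 2m + 9 \tag{$\star$}
\end{equation*}
where $E := \sum_{\alpha \in C(uw)} (\mul_{\mathbb{S}}(\alpha) - 2) \geq 0$ is the excess multiplicity of the candidate colours.

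Next I would install the auxiliary claims used in \autoref{4Sum}: for every $\theta \in C(uw)$ there is a critical path from $u$ to $w$ through a common colour, whence $\theta \in \mathbb{S}$; every colour of $\mathcal{U}(w)$ lies in $\mathbb{S}$; and each of the $4 - m$ colours in $\mathcal{U}(u) \setminus \mathcal{U}(w)$ appears at least twice in $\mathbb{S}$. Granting these,
\[
\sum_{\alpha \in \mathcal{U}(u) \cup \mathcal{U}(w)} \mul_{\mathbb{S}}(\alpha) \geq (\deg_{G}(w) - 1) + 2(4 - m) = \deg_{G}(w) - 2m + 7,
\]
which together with $E \geq 0$ falls short of $(\star)$ by exactly two, uniformly in $m \in \{2,3,4\}$. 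This is the essential difference from \autoref{4Sum}, where the central vertex has degree $4$, $\mathbb{S}$ ranges over three edges, and the corresponding deficit is only one. The case $m = 1$ must be excluded at the outset: there $|C(uw)|$ is so large that the partial colouring can always be completed or locally modified into an acyclic edge colouring of $G$, contradicting minimality, and this is handled by the same recolouring/critical-path scheme as Case~1 of \autoref{4Sum}.

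The crux is therefore to recover the missing two units in $(\star)$ for $m \in \{2,3,4\}$, and this is precisely where the triangle $wuw_{1}$ and the hypotheses on $\deg_{G}(w_{1})$ (together with the sharper bound on $\deg_{G}(w)$ in part (b)) enter. Since $w$ and $w_{1}$ are both of small degree and are adjacent, the colours $\phi(uw_{1})$ and $\phi(ww_{1})$ lie on the short triangle and are tightly constrained by the common colours; I would split into subcases according to whether the common colours sit on the triangle edges $uw_{1}, ww_{1}$ or on the remaining edges, mirroring the subcase analysis of Case~2 of \autoref{4Sum}. In each subcase I would show, by recolouring one or two of $uw_{1}, ww_{1}, uu_{i}$ and invoking Fact~\ref{Fact1} and Fact~\ref{Fact2}, that either two further colours are forced into $\mathbb{S}$ (raising $\sum \mul_{\mathbb{S}}$ by $2$) or some candidate colour attains multiplicity $3$ or $4$ (raising $E$); the low degree of $w_{1}$ is exactly what keeps the colour needed for such a recolouring out of $\mathcal{U}(w_{1})$, hence available. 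The main obstacle is this bookkeeping: one must check that in every configuration of the common colours relative to the triangle the recovered surplus is at least two, and that both regimes (a) $\deg_{G}(w) \leq \kappa - \Delta$, $\deg_{G}(w_{1}) \leq 6$ and (b) $\deg_{G}(w) \leq \kappa - \Delta - 1$, $\deg_{G}(w_{1}) \leq 7$ go through — in (b) the extra unit of slack in $\deg_{G}(w)$ (which lowers the right side of $(\star)$) is what compensates for the weaker bound on $\deg_{G}(w_{1})$.
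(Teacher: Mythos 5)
Your framework coincides with the paper's: delete $uw$, choose $\phi$ minimizing $m=|\mathcal{U}(u)\cap\mathcal{U}(w)|$, form $\mathbb{S}$ over the four neighbours of $u$ other than $w$, and reduce both parts to $\sum_{\theta\in\mathcal{U}(w)\cup\mathcal{U}(u)}\mul_{\mathbb{S}}(\theta)+|X|\geq \deg_{G}(w)-2m+9$; your identity, the value of $|C(uw)|$, and the observation that the naive count falls short by exactly two are all correct and agree with the paper. But what you have written is a plan, not a proof: every step that actually makes the lemma true is deferred with ``I would show.'' The case $m=1$ is not disposed of by ``the same scheme as Case~1 of \autoref{4Sum}.'' In the paper it is the single longest case; the completion of the colouring is blocked by dichromatic cycles through the triangle vertex $w_{1}$, and ruling these out consumes the hypotheses $\deg_{G}(w_{1})\leq 6$ (resp.\ $7$) and $\deg_{G}(w)\leq\kappa-\Delta$ (resp.\ $\kappa-\Delta-1$) -- indeed the (a)/(b) dichotomy first bites there, when the argument branches on whether $\deg_{G}(w)\leq\kappa-\Delta-1$ -- and it ends only after a long chain of forced critical paths. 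Asserting that ``$|C(uw)|$ is so large that the partial colouring can always be completed'' is not an argument.

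For $m\geq 2$ the situation is worse: the two auxiliary claims you propose to import from \autoref{4Sum} (every colour of $\mathcal{U}(w)$ lies in $\mathbb{S}$; every colour of $\mathcal{U}(u)\setminus\mathcal{U}(w)$ appears at least twice in $\mathbb{S}$) are proved there only for a $4$-vertex with three remaining neighbours, and the paper does not establish them in the present setting. In fact its case analysis explicitly confronts configurations in which a non-common colour of $\mathcal{U}(u)$ appears only once in $\mathbb{S}$ (``so we may assume that $\kappa-\Delta+1$ appears only once \dots and $X=\emptyset$''), and the required surplus must then be extracted by a different recolouring. Recovering the missing two units is the entire content of the lemma -- several pages of sub-subcases split by the position of the common colours on the triangle, with dedicated claims such as $\mathcal{A}(u_{2})\cap\mathcal{A}(u_{4})=\emptyset$, forced third occurrences of particular colours, or $\mathcal{A}(u_{3})\subseteq X$ -- and your proposal replaces all of it with a single sentence. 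Until that bookkeeping is carried out (including the check that the extra slack on $\deg_{G}(w)$ in part (b) genuinely compensates for $\deg_{G}(w_{1})\leq 7$, which the paper verifies at specific points via the dichotomy $|\mathcal{A}(w_{2})|\geq 2$ versus $|\mathcal{A}(w_{2})|\geq 3$), the proof has a gap exactly where the difficulty lies.
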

\begin{proof}%

We may assume that
\begin{enumerate}[label= (*)]%
\item The graph $G - wu$ admits an acyclic edge coloring $\phi$ such that the number of common colors at $w$ and $u$ is minimum.
\end{enumerate}

Here, (a) and (b) will be proved together, so we may assume that $\deg_{G}(w) \leq \kappa - \Delta$. Since $\deg_{G}(w) + \deg_{G}(u) \leq \kappa - \Delta + 5 < \kappa + 2$, we have that $|\Upsilon(wu) \cap \Upsilon(uw)| = m \geq 1$. It follows that $|C(wu)| = \kappa - (\deg_{G}(w) + \deg_{G}(u) - m - 2) \geq \Delta - 2$. Without loss of generality, let $N_{G}(w) = \{u, w_{1}, w_{2}, \dots\}$ and $\phi(ww_{i}) = i$ for $1 \leq i \leq \deg_{G}(w)-1$. Let $N_{G}(u) = \{w, u_{1}, u_{2}, u_{3}, u_{4}\}$ and $\mathbb{S} = \Upsilon(uu_{1}) \uplus \Upsilon(uu_{2}) \uplus \Upsilon(uu_{3}) \uplus \Upsilon(uu_{4})$.

Let $\mathcal{A}(u_{1}) = C(wu) \setminus \mathcal{U}(u_{1}) = \{\alpha_{1}, \alpha_{2}, \dots\}$, $\mathcal{A}(u_{2}) = C(wu) \setminus \mathcal{U}(u_{2}) =  \{\beta_{1}, \beta_{2}, \dots\}$, $\mathcal{A}(u_{3}) = C(wu) \setminus \mathcal{U}(u_{3}) = \{\xi_{1}, \xi_{2}, \dots\}$, $\mathcal{A}(u_{4}) = C(wu) \setminus \mathcal{U}(u_{4}) = \{\zeta_{1}, \zeta_{2}, \dots\}$ and $\mathcal{A}(w_{2}) = C(wu) \setminus \mathcal{U}(w_{2}) = \{\zeta_{1}^{*}, \zeta_{2}^{*}, \dots\}$.

\begin{claim}\label{WV-Path5}%
For every color $\theta$ in $C(wu)$, there exists an $(\lambda, \theta, u, w)$-critical path for some $\lambda \in \mathcal{U}(w) \cap \mathcal{U}(u)$. Consequently, $\mul_{\mathbb{S}}(\theta) \geq 1$. 
\end{claim}

\begin{case}\label{1Common}%
$\mathcal{U}(w) \cap \mathcal{U}(u) = \{\lambda\}$. By symmetry, we may assume that $w_{1} = u_{1}$.
\end{case}
It follows that $|C(wu)| = \kappa - (\deg_{G}(w) + \deg_{G}(u) - 3) \geq \Delta - 2$.

\begin{subcase}\label{1Common1}%
The edge $ww_{1}$ is colored with $\lambda$. By symmetry, assume that $\phi(uw_{1}) = \kappa - \Delta + 2$, $\phi(uu_{2}) = 1$, $\phi(uu_{3}) = \kappa - \Delta$, $\phi(uu_{4}) = \kappa - \Delta + 1$.
\end{subcase}
By \autoref{WV-Path5}, we have that $\{\kappa - \Delta +3, \dots, \kappa\}\subseteq \Upsilon(ww_{1})\cap \Upsilon(uu_{2})$. Moreover, $\mathcal{U}(w_{1}) = \{1, \kappa - \Delta + 2\} \cup \{\kappa - \Delta + 3, \dots, \kappa\}$, $\deg_{G}(w_{1}) = \Delta$ and $\deg_{G}(u_{2}) \geq \Delta - 1$. Notice that $|\Upsilon(uu_{2}) \cap \{2, 3, \dots, \kappa - \Delta - 1\}| \leq 1$, thus there exists a color $\zeta$ which is in $\{2, 3, \dots, \kappa - \Delta - 1\} \setminus \Upsilon(uu_{2})$ (note that this set is nonempty). But assigning $\kappa - \Delta + 2$ to $uw$ and $\zeta$ to $uw_{1}$ results in an acyclic edge coloring of $G$, a contradiction.

\begin{subcase}\label{1Common2}%
The edge $uw_{1}$ is colored with $\lambda$. By symmetry, assume that $\phi(uw_{1}) = 2$, $\phi(uu_{2}) = \kappa - \Delta$, $\phi(uu_{3}) = \kappa - \Delta + 1$, $\phi(uu_{4}) = \kappa - \Delta + 2$.
\end{subcase}
By \autoref{WV-Path5}, we have that $\{\kappa - \Delta + 3, \dots, \kappa\} \subseteq \Upsilon(uw_{1})\cap \Upsilon(ww_{2})$ and $\deg_{G}(w_{1}) = \Delta$ and $\deg_{G}(w_{2}) \geq \Delta - 1$. Modify $\phi$ by reassigning $1$ to $wu$ and reassigning a color in $\{\kappa - \Delta, \kappa - \Delta + 1, \kappa - \Delta + 2\} \setminus \mathcal{U}(w_{2})$ to $ww_{1}$, we obtain an acyclic edge coloring of $G$, a contradiction.

\begin{subcase}%
Neither $w_{1}w$ nor $w_{1}u$ is colored with $\lambda$. By symmetry, assume that $\phi(uw_{1}) = \kappa - \Delta$, $\phi(uu_{2}) = 2$, $\phi(uu_{3}) = \kappa - \Delta + 1$, $\phi(uu_{4}) = \kappa - \Delta + 2$.
\end{subcase}
By \autoref{WV-Path5}, we have that $C(wu) \subseteq \Upsilon(uu_{2})\cap \Upsilon(ww_{2})$ and $\deg_{G}(w_{2}) \geq \Delta - 1$ and $\deg_{G}(u_{2}) \geq \Delta - 1$. Notice that $\{1, \kappa - \Delta\} \nsubseteq \mathcal{U}(w_{2})$.

If $\deg_{G}(w) \leq \kappa - \Delta - 1$, then $\mathcal{U}(w) = \{1, 2, \dots, \kappa - \Delta - 2\}$ and $\deg(w_{2}) = \deg(u_{2}) = \Delta$, but reassigning $1$ to $uu_{2}$ will take us back to Subcase~\ref{1Common1}.

So we may assume that $\deg(w) = \kappa - \Delta$, $C(wu) = \{\kappa - \Delta + 3, \dots, \kappa\}$ and $\deg(w_{1}) \leq 6$.

Suppose that $C(wu) \subseteq \mathcal{U}(w_{1})$. Thus $\mathcal{U}(w_{1}) = \{1, \kappa - \Delta\} \cup C(wu)$. If $1 \notin \mathcal{U}(w_{2})$, then reassigning $1, \kappa - \Delta$ and $3$ to $wu, ww_{1}$ and $w_{1}u$ respectively results in an acyclic edge coloring of $G$, a contradiction. So we may assume that $1 \in \mathcal{U}(w_{2})$ and $\Upsilon(ww_{2}) = \{1\} \cup \{\kappa - \Delta + 3, \dots, \kappa\}$. But reassigning $\kappa - \Delta$ to $wu$ and $3$ to $w_{1}u$ results in an acyclic edge coloring of $G$, a contradiction. Hence, we have that $C(wu) \nsubseteq \mathcal{U}(w_{1})$. 

We further suppose that $1 \in \mathcal{U}(u_{2})$ and $\Upsilon(uu_{2}) = \{1\} \cup \{\kappa - \Delta + 3, \dots, \kappa\}$. If there is a $(2, 1, u, w)$-critical path, then $\deg_{G}(w_{2}) = \Delta(G)$ and $\Upsilon(ww_{2}) = \{1\} \cup \{\kappa  - \Delta + 3, \dots, \kappa\}$, but reassigning $\kappa - \Delta$ to $ww_{2}$ will take us back to Subcase~\ref{1Common2}. So we may assume that there is no $(2, 1, u, w)$-critical path. There exists a $(\tau^{*}, \alpha_{1}, w, w_{1})$-critical path with some $\tau^{*} \in \mathcal{U}(w) \setminus \{1, 2\}$, otherwise reassigning $\alpha_{1}$ to $ww_{1}$ and $1$ to $uw$ will result in an acyclic edge coloring of $G$. By symmetry, assume that $\tau^{*} = 3$ and there exists a $(3, \alpha_{1}, w, w_{1})$-critical path. But reassigning $3$ to $uu_{2}$ and $\alpha_{1}$ to $wu$ results in an acyclic edge coloring of $G$.

So we may assume that $1 \notin \mathcal{U}(u_{2})$. There exists a $(\kappa  - \Delta + 1, 1, u, u_{2})$- or $(\kappa - \Delta + 2, 1, u, u_{2})$-critical path; otherwise, reassigning $1$ to $uu_{2}$ will take us back to Subcase~\ref{1Common1}. By symmetry, assume that there exists a $(\kappa  - \Delta + 2, 1, u, u_{2})$-critical path and $1 \in \Upsilon(uu_{4})$, thus $\deg_{G}(u_{2}) = \Delta(G)$ and $\Upsilon(uu_{2}) = \{\kappa - \Delta + 2, \kappa - \Delta + 3, \dots, \kappa\}$.

There exists a $(\kappa  - \Delta + 1, \alpha_{1}, u, w_{1})$- or $(\kappa - \Delta + 2, \alpha_{1}, u, w_{1})$-critical path; otherwise, reassigning $\alpha_{1}$ to $uw_{1}$ and $\kappa - \Delta$ to $uw$ will result in an acyclic edge coloring of $G$. Hence, $\{\kappa - \Delta + 1, \kappa - \Delta + 2\} \cap \mathcal{U}(w_{1}) \neq \emptyset$. Similarly, there exists a $(\tau, \alpha_{1}, w, w_{1})$-critical path with some $\tau \in \mathcal{U}(w) \setminus \{1, 2\}$. By symmetry, assume that $\tau = 3$ and there exists a $(3, \alpha_{1}, w, w_{1})$-critical path. Hence, $|\mathcal{U}(w_{1}) \cap (\mathcal{U}(w) \cup \mathcal{U}(u))| \geq 4$ and $|\mathcal{U}(w_{1}) \cap C(wu)| \leq 2$, and then $|C(wu) \setminus \mathcal{U}(w_{1})| \geq \Delta - 4$.

Suppose that $\mathcal{A}(u_{4}) \cap \mathcal{A}(w_{1}) \neq \emptyset$, say $\zeta \in \mathcal{A}(u_{4}) \cap \mathcal{A}(w_{1})$. Thus there exists a $(\kappa - \Delta + 1, \zeta, u, w_{1})$-critical path; otherwise, reassigning $\zeta$ to $uw_{1}$ and $\kappa - \Delta$ to $uw$ will result in an acyclic edge coloring of $G$. There exists a $(2, \kappa - \Delta + 2, u, w)$-critical path, otherwise reassigning $\zeta$ to $uu_{4}$ and $\kappa - \Delta + 2$ to $uw$ will result in an acyclic edge coloring of $G$. Hence, we have that $\Upsilon(ww_{2}) = \Upsilon(uu_{2}) = \{\kappa - \Delta + 2, \kappa - \Delta + 3, \dots, \kappa\}$. But reassigning $\kappa - \Delta$ to $ww_{2}$ will take us back to Subcase~\ref{1Common2}. So we have that $\mathcal{A}(u_{4}) \cap \mathcal{A}(w_{1}) = \emptyset$.

There exists a $(\kappa  - \Delta + 2, 3, u, u_{2})$-critical path, for otherwise reassigning $3$ to $uu_{2}$ and $\alpha_{1}$ to $uw$ will result in an acyclic edge coloring of $G$. It follows that $\{1, 3\} \cup \mathcal{A}(w_{1}) \subseteq \Upsilon(uu_{4})$. If $2 \notin \mathcal{U}(u_{4})$ and there exists no $(\kappa - \Delta + 1, 2, u, u_{4})$-critical path, then reassigning $2, 1$ and $\alpha_{1}$ to $uu_{4}, uu_{2}$ and $uw$, respectively, will result in an acyclic edge coloring of $G$. Hence, $\mathcal{U}(u_{4}) = \{1, 2, 3, \kappa - \Delta + 2\} \cup \mathcal{A}(w_{1})$ or $\mathcal{U}(u_{4}) = \{1, 3, \kappa - \Delta + 1, \kappa - \Delta + 2\} \cup \mathcal{A}(w_{1})$. Consequently, we have that $|\mathcal{A}(w_{1})| = \Delta - 4$, and then $\mathcal{U}(w_{1}) \cap (\mathcal{U}(w) \cup \mathcal{U}(u)) = \{1, 3, \kappa - \Delta, \kappa - \Delta + 1\}$ or $\{1, 3, \kappa - \Delta, \kappa - \Delta + 2\}$.

There exists a $(\kappa - \Delta + 1, 2, u, w_{1})$- or $(\kappa - \Delta + 2, 2, u, w_{1})$-critical path, for otherwise we reassign $\alpha_{1}, 2$ and $\kappa - \Delta$ to $uw, uw_{1}$ and $uu_{2}$. Thus, $\mathcal{U}(u_{4}) = \{1, 2, 3, \kappa - \Delta + 2\} \cup \mathcal{A}(w_{1})$. If $\mathcal{U}(w_{1}) \cap (\mathcal{U}(w) \cup \mathcal{U}(u)) = \{1, 3, \kappa - \Delta, \kappa - \Delta + 2\}$, then we reassign $\alpha_{1}, 2, 3$ and $\kappa - \Delta$ to $uw, uw_{1}, uu_{2}$ and $uu_{4}$. Therefore, $\mathcal{U}(w_{1}) \cap (\mathcal{U}(w) \cup \mathcal{U}(u)) = \{1, 3, \kappa - \Delta, \kappa - \Delta + 1\}$, but reassigning $\kappa - \Delta + 2, 1$ and $4$ to $uw, uu_{2}$ and $uu_{4}$ results in an acyclic edge coloring of $G$.

\begin{case}%
$\mathcal{U}(w) \cap \mathcal{U}(u) = \{\lambda_{1}, \dots, \lambda_{m}\}$ and $m \geq 2$.
\end{case}
We can relabel the vertices in $\{u_{1}, u_{2}, u_{3}, u_{4}\}$ as $\{v_{1}, v_{2}, v_{3}, v_{4}\}$. By symmetry, we may assume that $\phi(uv_{i}) = \lambda_{i}$ for $i \in \{1, \dots, m\}$.

\begin{claim}\label{CapEmpty}
The sets $\mathcal{A}(v_{1}), \mathcal{A}(v_{2}), \dots, \mathcal{A}(v_{m})$ are pairwise disjoint.
\end{claim}
\begin{proof}
Suppose, to the contrary, that $\alpha \in \mathcal{A}(v_{1}) \cap \mathcal{A}(v_{2})$. By Claim \ref{WV-Path5} and the symmetry, we may assume that there exists a $(\lambda_{3}, \alpha, u, w)$-critical path and $m \geq 3$, which implies that there exists no $(\lambda_{3}, \alpha, u, v_{2})$-critical path. Consequently, there exists a $(\phi(uv_{4}), \alpha, u, v_{2})$-critical path; otherwise, reassigning $\alpha$ to $uv_{2}$ to obtain a new acyclic edge coloring of $G - wu$, which contradicts the minimality of $m$. Now, reassigning $\alpha$ to $uv_{1}$ to obtain an acyclic edge coloring $\pi$ of $G - wu$, but $|\mathcal{U}_{\pi}(u) \cap \mathcal{U}_{\pi}(w)| < |\mathcal{U}(u) \cap \mathcal{U}(w)|$, which is a contradiction.
\end{proof}
\begin{claim}
Every color in $C(wu)$ appears at least twice in $\mathbb{S}$.
\end{claim}
\begin{proof}%
Suppose that there exists a color $\alpha$ in $C(wu)$ such that $\mul_{\mathbb{S}}(\alpha) = 1$. By \autoref{WV-Path5} and symmetry, we may assume that there exists a $(\lambda_{1}, \alpha, u, w)$-critical path and $\alpha \in \mathcal{U}(v_{1})$. But reassigning $\alpha$ to $uv_{2}$ results in a new acyclic edge coloring of $G - wu$, which contradicts the assumption ($\ast$).
\end{proof}
Let $X = \{\,\theta \mid \theta \in C(wu) \mbox{ and }\mul_{\mathbb{S}}(\theta) \geq 3\,\}$.
\begin{align*}%
&\sum_{x \in N_{G}(u)} \deg_{G}(x)\\
&= \deg_{G}(u) + \deg_{G}(w) -1 + \sum_{\theta \in [\kappa]} \mul_{\mathbb{S}}(\theta)\\
&= \deg_{G}(u) + \deg_{G}(w) -1 + \sum_{\theta \in C(wu)} \mul_{\mathbb{S}}(\theta) + \sum_{\theta \in \mathcal{U}(w) \cup \mathcal{U}(u)} \mul_{\mathbb{S}}(\theta) \\
&\geq \deg_{G}(u) + \deg_{G}(w) - 1 + 2|C(wu)| + |X| + \sum_{\theta \in \mathcal{U}(w) \cup \mathcal{U}(u)} \mul_{\mathbb{S}}(\theta)\\
&= \deg_{G}(u) + \deg_{G}(w) - 1 + 2(\kappa - (\deg_{G}(w) + \deg_{G}(u) - 2 - m)) + |X| + \sum_{\theta \in \mathcal{U}(w) \cup \mathcal{U}(u)} \mul_{\mathbb{S}}(\theta)\\
&= 2\kappa - \deg_{G}(u) - \deg_{G}(w) + 2m + 3 + |X| + \sum_{\theta \in \mathcal{U}(w) \cup \mathcal{U}(u)} \mul_{\mathbb{S}}(\theta)
\end{align*}

It is sufficient to prove that
\begin{equation}\label{EQ*}%
\sum_{\theta \in \mathcal{U}(w) \cup \mathcal{U}(u)} \mul_{\mathbb{S}}(\theta) + |X| \geq \deg_{G}(w) - 2m + 9.
\end{equation}

\begin{subcase}\label{2Common}%
$\mathcal{U}(w) \cap \mathcal{U}(u) = \{\lambda_{1}, \lambda_{2}\}$ and $w_{1} = u_{1}$. Note that $\mathcal{A}(w_{1}) \neq \emptyset$.
\end{subcase}
\begin{subsubcase}\label{2Common1}%
The two colors on the edges $w_{1}w$ and $w_{1}u$ are all common colors.
\end{subsubcase}
Without loss of generality, assume that $\phi(uw_{1}) = 2$, $\phi(uu_{2}) = 1$, $\phi(uu_{3}) = \kappa - \Delta$ and $\phi(uu_{4}) = \kappa - \Delta + 1$. Consequently, we conclude that $\{\kappa - \Delta +2, \dots, \kappa\} \subseteq \mathcal{U}(w_{1})$ and $\deg_{G}(w_{1}) \geq \Delta + 1$, a contradiction.

\begin{subsubcase}\label{2Common2}%
The color on $w_{1}w$ is a common color and the color on $w_{1}u$ is not a common color.
\end{subsubcase}
Without loss of generality, assume that $\phi(uw_{1}) = \kappa - \Delta$, $\phi(uu_{2}) = 1$, $\phi(uu_{3}) = 2$ and $\phi(uu_{4}) = \kappa - \Delta + 1$.

For every color $\alpha_{i} \in \mathcal{A}(w_{1})$, there exists a $(\theta_{i}, \alpha_{i}, w, w_{1})$-critical path with some $\theta_{i} \in \mathcal{U}(w) \setminus \{1, 2\}$; otherwise, reassigning $\alpha_{i}$ to $ww_{1}$ will take us back to Case 1. By symmetry, we may assume that there exists a $(3, \alpha^{*}, w, w_{1})$-critical path with some $\alpha^{*}$, and then $3 \in \mathcal{U}(w_{1})$. If $\Upsilon(ww_{2}) \subseteq C(wu)$, then reassigning $\kappa - \Delta$ to $ww_{2}$ will take us back to Subcase~\ref{2Common1}. So we have that $\Upsilon(ww_{2}) \nsubseteq C(wu)$ and $\mathcal{A}(w_{2}) \neq \emptyset$. Consequently, for every color $\zeta_{i}^{*} \in \mathcal{A}(w_{2})$, there exists a $(\mu_{i}, \zeta_{i}^{*}, w, w_{2})$-critical path with some $\mu_{i} \in \mathcal{U}(w) \setminus \{1, 2\}$; otherwise, reassigning $\zeta_{i}^{*}$ to $ww_{2}$ will take us back to Case 1. Hence, $\{1, 3, \kappa - \Delta\} \subseteq \mathcal{U}(w_{1})$ and $\{2, \mu_{1}\} \subseteq \mathcal{U}(w_{2})$, and then $|\mathcal{A}(w_{1})| \geq 2$ and $|\mathcal{A}(w_{2})| \geq 1$.

If $\Upsilon(uu_{2}) \subseteq C(wu)$, then reassigning $\mu_{1}$ to $uu_{2}$ and $\zeta_{1}^{*}$ to $wu$ results in an acyclic edge coloring of $G$, a contradiction. Thus, we have that $\Upsilon(uu_{2}) \nsubseteq C(wu)$ and $\mathcal{A}(u_{2}) \neq \emptyset$. For every color $\beta_{i} \in \mathcal{A}(u_{2})$, there exists an $(\varepsilon_{i}, \beta_{i}, u, u_{2})$-critical path with some $\varepsilon_{i} \in \{\kappa - \Delta, \kappa - \Delta + 1\}$; otherwise, reassigning $\beta_{i}$ to $uu_{2}$ will take us back to Case~\ref{1Common}.

If $\Upsilon(uu_{3}) \subseteq C(wu)$, then reassigning $3$ to $uu_{3}$ and $\alpha^{*}$ to $wu$ results in an acyclic edge coloring of $G$, a contradiction. Thus, we have that $\Upsilon(uu_{3}) \nsubseteq C(wu)$ and $\mathcal{A}(u_{3}) \neq \emptyset$. Consequently, for every color $\xi_{i} \in \mathcal{A}(u_{3})$, there exists a $(m_{i}, \xi_{i}, u, u_{3})$-critical path with some $m_{i} \in \{\kappa - \Delta, \kappa - \Delta + 1\}$; otherwise, reassigning $\xi_{i}$ to $uu_{3}$ will take us back to Case~\ref{1Common}.

\begin{claim}\label{Claim*}%
$\mathcal{A}(u_{2}) \cap \mathcal{A}(u_{4}) = \emptyset$.
\end{claim}
\begin{proof}[Proof of \autoref{Claim*}]%
Suppose that $\beta_{1} \in \mathcal{A}(u_{2}) \cap \mathcal{A}(u_{4})$. It follows that there exists a $(\kappa - \Delta, \beta_{1}, u, u_{2})$-critical path, $\kappa - \Delta \in\Upsilon(uu_{2})$ and $\beta_{1} \in \Upsilon(uw_{1})$. Also, there exists a $(1, \kappa - \Delta + 1, w, u)$- or $(2, \kappa - \Delta + 1, w, u)$-critical path; otherwise, reassigning $\beta_{1}$ to $uu_{4}$ and $\kappa - \Delta + 1$ to $wu$ results in an acyclic edge coloring of $G$. Suppose that there exists a $(1, \kappa - \Delta + 1, w, u)$-critical path and $\kappa - \Delta + 1 \in \Upsilon(ww_{1}) \cap \Upsilon(uu_{2})$. It follows that $\{1, \kappa - \Delta, \kappa - \Delta + 1\} \subseteq \mathcal{U}(u_{2})$ and $|\mathcal{A}(u_{2})| \geq 2$. Furthermore, we can conclude that $\{1, 3, \kappa - \Delta, \kappa - \Delta + 1, \beta_{1}\} \cup \mathcal{A}(w_{2}) \subseteq \mathcal{U}(w_{1})$. Note that $\mathcal{A}(w_{2}) \cap \mathcal{A}(u_{2}) = \emptyset$, thus $\mathcal{U}(w_{1}) = \{1, 3, \kappa - \Delta, \kappa - \Delta + 1, \beta_{1}\} \cup \mathcal{A}(w_{2})$ and $\mathcal{U}(w_{2}) \cap (\mathcal{U}(w) \cup \mathcal{U}(u)) = \{2, \mu_{1}\}$. Recall that $\beta_{2} \notin \mathcal{A}(w_{2})$, thus $\varepsilon_{2} = \kappa - \Delta + 1$ and there exists a $(\kappa - \Delta + 1, \beta_{2}, u, u_{2})$-critical path. Now, reassigning $\beta_{2}$ to $uw_{1}$ and $\kappa - \Delta$ to $wu$ results in an acyclic edge coloring of $G$.

So, we may assume that there exists a $(2, \kappa - \Delta + 1, w, u)$-critical path. Hence, $\{1, \kappa - \Delta, 3, \beta_{1}\} \cup \mathcal{A}(w_{2}) \subseteq \mathcal{U}(w_{1})$ and $\{2, \mu_{1}, \kappa - \Delta + 1\} \subseteq \Upsilon(ww_{2})$. It follows that $\mathcal{U}(w_{1}) = \{1, \kappa - \Delta, 3, \beta_{1}\} \cup \mathcal{A}(w_{2})$ and $\mathcal{U}(w_{2}) \cap (\mathcal{U}(w) \cup \mathcal{U}(u)) = \{2, \mu_{1}, \kappa - \Delta + 1\}$. Now, reassigning $\alpha_{1}$ to $uw_{1}$ and $\kappa - \Delta$ to $wu$ results in an acyclic edge coloring of $G$. This completes the proof of \autoref{Claim*}.
\end{proof}

\begin{claim}\label{Claim**}%
$\mathcal{A}(u_{2}) \cap \mathcal{A}(w_{1}) = \emptyset$.
\end{claim}
\begin{proof}[Proof of \autoref{Claim**}]
By contradiction, assume that $\alpha_{1} = \beta_{1}$. It follows that there exists a $(\kappa - \Delta + 1, \beta_{1}, u, u_{2})$-critical path and $\kappa - \Delta + 1 \in \mathcal{U}(u_{2})$. There exists a $(2, \kappa - \Delta, w, u)$-critical path; otherwise, reassigning $\beta_{1}$ to $uw_{1}$ and $\kappa - \Delta$ to $uw$ results in an acyclic edge coloring of $G$, a contradiction. So we have that $\kappa - \Delta \in \Upsilon(ww_{2}) \cap \Upsilon(uu_{3})$.

Note that $\{1, \kappa - \Delta, 3\} \subseteq \mathcal{U}(w_{1})$ and $\{2, \mu_{1}, \kappa - \Delta\} \subseteq \mathcal{U}(w_{2})$. If $\deg_{G} (w)\leq \kappa - \Delta$ and $\deg_{G}(w_{1}) \leq 6$, then $\mathcal{A}(w_{2}) \subseteq \mathcal{U}(w_{1})$ with $|\mathcal{A}(w_{2})| \geq 2$,  and then $|\mathcal{U}(w_{1}) \cap \mathcal{U}(w)| \leq 3$; similarly, if $\deg_{G}(w) \leq \kappa - \Delta - 1$ and $\deg_{G}(w_{1}) \leq 7$, then $\mathcal{A}(w_{2}) \subseteq \mathcal{U}(w_{1})$ with $|\mathcal{A}(w_{2})| \geq 3$, and then $|\mathcal{U}(w_{1}) \cap \mathcal{U}(w)| \leq 3$. If $\mathcal{U}(w_{1}) \cap \mathcal{U}(w)  = \{1, 3\}$, then $\{2, \kappa - \Delta\} \subsetneq \mathcal{U}(u_{3}) \cap (\mathcal{U}(w) \cup \mathcal{U}(u))$; otherwise, reassigning $\alpha_{1}, \alpha_{2}$ and $3$ to $uw_{1}, uw$ and $uu_{3}$ respectively results in an acyclic edge coloring of $G$. Suppose that $\mathcal{U}(w_{1}) \cap \mathcal{U}(w) = \{1, 3, s\}$. Since $|\mathcal{A}(w_{1})| \geq 3$, thus there exists a $\tau \in \{3, s\}$ and $\alpha_{i}, \alpha_{j}$ such that both $(\tau, \alpha_{i}, w, w_{1})$- and $(\tau, \alpha_{j}, w, w_{1})$-critical path exist, and thus $\{2, \kappa - \Delta\} \subsetneq \mathcal{U}(u_{3}) \cap (\mathcal{U}(w) \cup \mathcal{U}(u))$; otherwise, reassigning $\alpha_{i}, \alpha_{j}$ and $\tau$ to $uw_{1}, uw$ and $uu_{3}$ respectively. Anyway, we have that $|\mathcal{U}(u_{3}) \cap (\mathcal{U}(w) \cup \mathcal{U}(u))| \geq 3$.

If $\kappa - \Delta$ only appears only once (at $u_{3}$) in $\mathbb{S}$, then reassigning $\kappa - \Delta$ to $uu_{2}$ and $\beta_{1}$ to $uw_{1}$ will take us back to Case 1. So we conclude that the color $\kappa - \Delta$ appears at least twice in $\mathbb{S}$.

If $1 \notin \mathbb{S} \setminus \mathcal{U}(w_{1})$, then reassigning $1, \beta_{1}$  and $\xi_{1}$ to $uu_{4}, uu_{2}$ and $wu$ respectively, results in an acyclic edge coloring of $G$, a contradiction. Therefore, the color $1$ appears at least twice in $\mathbb{S}$.

Suppose that $4 \notin \mathbb{S}$. Thus there exists a $(4, \xi_{1}, w, u_{2})$-alternating path; otherwise, reassigning $4$ to $uu_{2}$ and $\xi_{1}$ to $wu$ results in an acyclic edge coloring of $G$, a contradiction. Now, reassigning $4, \beta_{1}$ and $\xi_{1}$ to $uu_{4}, uu_{2}$ and $wu$ respectively, results in an acyclic edge coloring $G$, a contradiction. So we conclude that $4 \in \mathbb{S}$. By symmetry, we can also obtain that every color in $\mathcal{U}(w) \setminus \{1, 2, 3\}$ appears in $\mathbb{S}$.

Suppose that every color in $\mathcal{U}(w) \setminus \{1, 2\}$ appears exactly once in $\mathbb{S}$. Suppose that $\mathcal{U}(w_{1}) \cap (\mathcal{U}(w) \setminus \{1, 2\}) = \{3, s\}$. Thus, $\mathcal{U}(w_{1}) = \{1, \kappa - \Delta, 3, s\} \cup \mathcal{A}(w_{2})$ and $\kappa - \Delta + 1 \notin \mathcal{U}(w_{1})$. Since $|\mathcal{A}(w_{1})| \geq 3$, thus there exists a $\tau \in \{3, s\}$ and $\alpha_{i}, \alpha_{j}$ such that both $(\tau, \alpha_{i}, w, w_{1})$- and $(\tau, \alpha_{j}, w, w_{1})$-critical path exist. Reassigning $\tau, \alpha_{i}$ and $\alpha_{j}$ to $uu_{3}, uw_{1}$ and $wu$ respectively, results in an acyclic edge coloring of $G$, a contradiction. So we may assume that $|\mathcal{U}(w_{1}) \cap (\mathcal{U}(w) \setminus \{1, 2\})| = 1$, that is $\mathcal{U}(w_{1}) \cap (\mathcal{U}(w) \setminus \{1, 2\}) = \{3\}$. Reassigning $3, \alpha_{1}$ and $\alpha_{2}$ to $uu_{3}, uw_{1}$ and $wu$ respectively, results in an acyclic edge coloring of $G$, a contradiction. Hence, we may assume that the color $3$ appears at least twice in $\mathbb{S}$.

Suppose that $\xi_{1} \in \mathcal{A}(u_{4})$. Thus, there exists a $(\kappa - \Delta, \xi_{1}, u, u_{3})$-critical path; otherwise, reassigning $\xi_{1}$ to $uu_{3}$ will take us back to Case 1. Furthermore, $\kappa - \Delta + 1 \in \Upsilon(ww_{1}) \cup \Upsilon(uu_{3})$; otherwise, reassigning $\xi_{1}$ to $uu_{4}$ and $\kappa - \Delta + 1$ to $wu$ results in an acyclic edge coloring of $G$. If $2 \notin \mathbb{S}$, then reassigning $\alpha_{1}, 2$ and $\xi_{1}$ to $wu, uw_{1}$ $uu_{3}$ respectively, results in an acyclic edge coloring of $G$. So we have that $2 \in \mathbb{S}$. Hence,
\[
\sum_{\theta \in \mathcal{U}(w) \cup \mathcal{U}(u)} \mul_{\mathbb{S}}(\theta) + |X| \geq
|\{4, \dots, \deg_{G}(w) - 1\}| + 2|\{1, 3, \kappa - \Delta, \kappa - \Delta + 1\}| + |\{2\}| = \deg_{G}(w) + 5.
\]

So we may assume that $\mathcal{A}(u_{3}) \cap \mathcal{A}(u_{4}) = \emptyset$. It is obvious that $\mathcal{A}(u_{3}) \subseteq X$. Hence,

\[
\sum_{\theta \in \mathcal{U}(w) \cup \mathcal{U}(u)} \mul_{\mathbb{S}}(\theta) + |X| \geq
|\{4, \dots, \deg_{G}(w) - 1\}| + 2|\{1, 3, \kappa - \Delta\}| + |\{\kappa - \Delta + 1\}| + |\mathcal{A}(u_{3})| \geq \deg_{G}(w) + 4.
\]
The equality holds only if $\kappa - \Delta + 1$ appears only once in $\mathbb{S}$ and $2$ does not appear in $\mathbb{S}$; but reassigning $\alpha_{1}, 2$ and $\xi_{1}$ to $wu, uw_{1}$ and $uu_{3}$ respectively, results in an acyclic edge coloring. Therefore, inequality \eqref{EQ*} holds, we are done.  This completes the proof \autoref{Claim**}.
\end{proof}

By \autoref{Claim**}, the three sets $\mathcal{A}(w_{1}), \mathcal{A}(u_{2})$ and $\mathcal{A}(u_{3})$ are pairwise disjoint.

(1) Suppose that there exists no $(2, \kappa - \Delta, w, u)$-critical path. This implies that there exists a $(\kappa - \Delta + 1, \alpha_{i}, u, w_{1})$-critical path; otherwise, reassigning $\alpha_{i}$ to $uw_{1}$ and $\kappa - \Delta$ to $wu$ results in an acyclic edge coloring of $G$. Thus, $\{1, 3, \kappa - \Delta, \kappa - \Delta + 1\} \subseteq \mathcal{U}(w_{1})$ and $\mathcal{A}(w_{1}) \subseteq \mathcal{U}(u_{4})$. Note that $\mathcal{A}(u_{2}) \cup \mathcal{A}(u_{3}) \subseteq \mathcal{U}(w_{1})$, thus $|\mathcal{A}(u_{2})| = |\mathcal{A}(u_{3})| = 1$ and $\mathcal{U}(w_{1}) \cap (\mathcal{U}(w) \cup \mathcal{U}(u)) = \{1, 3, \kappa - \Delta, \kappa - \Delta + 1\}$. Similarly, we know that $\mathcal{A}(u_{2}) \cup \mathcal{A}(w_{2}) \subseteq \mathcal{U}(w_{1})$, which implies that $|\mathcal{A}(w_{2})| = |\mathcal{A}(u_{3})| = 1$ and $\mathcal{A}(w_{2}) = \mathcal{A}(u_{3})$. Hence, $\mathcal{U}(w_{2}) \cap (\mathcal{U}(w) \cup \mathcal{U}(u)) = \{2, \mu_{1}\}$. By \autoref{Claim*}, we conclude that $\mathcal{U}(u_{4}) \supseteq \mathcal{A}(w_{1}) \cup \mathcal{A}(u_{2}) \cup \{\kappa - \Delta + 1\}$. If $\Upsilon(uu_{4}) \subseteq C(uw)$, then reassigning $3, \alpha^{*}$ and $\kappa - \Delta$ to $uu_{4}, uw_{1}$ and $wu$ respectively results in an acyclic edge coloring of $G$. Note that $|\mathcal{A}(w_{1})| + |\mathcal{A}(u_{2})| = \Delta - 2$, so we may assume that $|\Upsilon(uu_{4}) \cap (\mathcal{U}(w) \cup \mathcal{U}(u))| = 1$. In addition, $\Upsilon(uu_{4}) \cap C(uw) = \mathcal{A}(w_{1}) \cup \mathcal{A}(u_{2})$ and $\mathcal{U}(u_{2}) \cap (\mathcal{U}(w) \cup \mathcal{U}(u)) = \{1, \varepsilon_{1}\}$. Recall that $\mathcal{A}(w_{1}), \mathcal{A}(u_{2})$ and $\mathcal{A}(u_{3})$ are pairwise disjoint, thus $\mathcal{A}(u_{3}) \cap \mathcal{U}(u_{4}) = \emptyset$, and then there exists a $(\kappa - \Delta, \xi_{1}, u, u_{3})$-critical path and $\kappa - \Delta \in \Upsilon(uu_{3})$. There exists a $(1, \kappa - \Delta + 1, u, w)$-critical path; otherwise, reassigning $\xi_{1}$ and $\kappa - \Delta + 1$ to $uu_{4}$ and $uw$ results in an acyclic edge coloring of $G$. Hence, $\mathcal{U}(u_{2}) \cap (\mathcal{U}(w) \cup \mathcal{U}(u)) = \{1, \varepsilon_{1}\}= \{1, \kappa - \Delta + 1\}$. There exists a $(\kappa - \Delta + 1, \mu_{1}, u, u_{2})$-critical path, otherwise, reassigning $\mu_{1}$ to $uu_{2}$ and $\zeta_{1}^{*}$ to $uw$ results in an acyclic edge coloring of $G$. Hence, $\Upsilon(uu_{4}) \cap (\mathcal{U}(w) \cup \mathcal{U}(u)) = \{\mu_{1}, \kappa - \Delta + 1\}$. Now, reassigning $\zeta_{1}^{*}, \alpha_{1}, \mu_{1}$ and $\kappa - \Delta$ to $uw, uw_{1}, uu_{2}$ and $uu_{4}$ respectively, yields an acyclic edge coloring of $G$.

(2) Now, we may assume that there exists a $(2, \kappa - \Delta, w, u)$-critical path and $\kappa - \Delta \in \Upsilon(uu_{3}) \cap \Upsilon(ww_{2})$. Clearly, $\mathcal{U}(w_{1}) \supseteq \mathcal{A}(u_{2}) \cup \mathcal{A}(w_{2}) \cup \{1, 3, \kappa - \Delta\}$. If $\deg_{G}(w) \leq \kappa - \Delta - 1$, then $\deg_{G}(w_{1}) \geq 2 + 3 + 3 = 8$, a contradiction. Thus, $\deg_{G}(w) = \kappa - \Delta$, which implies that $\mathcal{U}(w_{1}) = \mathcal{A}(u_{2}) \cup \mathcal{A}(w_{2}) \cup \{1, 3, \kappa - \Delta\}$, $|\mathcal{A}(u_{2})| = 1$ and $|\mathcal{A}(w_{2})| = 2$. It is easy to see that $\mathcal{U}(w_{2}) \cap (\mathcal{U}(w) \cup \mathcal{U}(u)) = \{2, \kappa - \Delta, \mu_{1}\}$ and $\mathcal{U}(u_{2}) \cap (\mathcal{U}(w) \cup \mathcal{U}(u)) =  \{1, \varepsilon_{1}\}$. If $3 \notin \mathcal{U}(u_{3})$, then there exists a $(\kappa - \Delta + 1, 3, u, u_{3})$-critical path; otherwise, reassigning $\alpha_{1}, \alpha_{2}$ and $3$ to $uw_{1}, uw$ and $uu_{3}$ respectively results in an acyclic edge coloring of $G$. Hence, we have that $\{3, \kappa - \Delta + 1\} \cap \mathcal{U}(u_{3}) \neq \emptyset$ and $|\mathcal{A}(u_{3})| \geq 2$. Recall that $\mathcal{A}(w_{1}), \mathcal{A}(u_{2})$ and $\mathcal{A}(u_{3})$ are disjoint, thus $\mathcal{U}(w_{1}) \supseteq \mathcal{A}(u_{2}) \cup \mathcal{A}(u_{3}) \cup \{1, 3, \kappa - \Delta\}$. Moreover, $\mathcal{U}(w_{1}) = \mathcal{A}(u_{2}) \cup \mathcal{A}(u_{3}) \cup \{1, 3, \kappa - \Delta\}$, $\mathcal{A}(u_{3}) = \mathcal{A}(w_{2})$. If there exists a $\xi_{i} \notin \mathcal{U}(u_{4})$, then there exists a $(\kappa - \Delta, \xi_{i}, u, u_{3})$-critical path, and then reassigning $\xi_{i}$ to $uu_{4}$ and $\kappa - \Delta + 1$ to $uw$ results in an acyclic edge coloring of $G$. So we have that $\mathcal{A}(u_{2}) \cup \mathcal{A}(u_{3}) \subseteq \mathcal{U}(u_{4})$.

There exists a $(\kappa - \Delta, \mu_{1}, u, u_{2})$- or $(\kappa - \Delta + 1, \mu_{1}, u, u_{2})$-critical path; otherwise, reassigning $\mu_{1}$ to $uu_{2}$ and $\zeta_{1}^{*}$ to $uw$ results in an acyclic edge coloring of $G$. If there exists a $(\kappa - \Delta, \mu_{1}, u, u_{2})$-critical path, then $\mu_{1} = 3$ and $\varepsilon_{1} = \kappa - \Delta$; but reassigning $\mu_{1}, \alpha^{*}$ and $\zeta_{1}^{*}$ to $uu_{2}, uw_{1}$ and $uw$ results in an acyclic edge coloring. So there exists a $(\kappa - \Delta + 1, \mu_{1}, u, u_{2})$-critical path, thus $\varepsilon_{1} = \kappa - \Delta + 1$ and $\mathcal{U}(u_{4}) \cap (\mathcal{U}(w) \cup \mathcal{U}(u)) = \{\mu_{1}, \kappa - \Delta + 1\}$. Now, reassigning $\kappa - \Delta, \mu_{1}, \alpha^{*}$ and $\zeta_{1}^{*}$ to $uu_{4}, uu_{2}, uw_{1}$ and $uw$ respectively, yields an acyclic edge coloring of $G$.

\begin{subsubcase}\label{2Common3}%
The color on $w_{1}w$ is not a common color and the color on $w_{1}u$ is a common color.
\end{subsubcase}
Without loss of generality, assume that $\phi(uw_{1}) = 3$, $\phi(uu_{2}) = 2$, $\phi(uu_{3}) = \kappa - \Delta$ and $\phi(uu_{4}) = \kappa - \Delta + 1$.

For every color $\alpha_{i} \in \mathcal{A}(w_{1})$, there exists a $(\theta_{i}, \alpha_{i}, u, w_{1})$-critical path with some $\theta_{i} \in \{\kappa - \Delta, \kappa - \Delta + 1\}$; otherwise, reassigning $\alpha_{i}$ to $uw_{1}$ will take us back to Case~\ref{1Common}. If $\Upsilon(uu_{2}) \subseteq C(wu)$, then reassigning $1$ to $uu_{2}$ will take us back to Subcase~\ref{2Common1}. So we have that $\Upsilon(uu_{2}) \nsubseteq C(wu)$ and $\mathcal{A}(u_{2}) \neq \emptyset$. Consequently, for every color $\beta_{i} \in \mathcal{A}(u_{2})$, there exists a $(\varepsilon_{i}, \beta_{i}, u, u_{2})$-critical path with some $\varepsilon_{i} \in \{\kappa - \Delta, \kappa - \Delta + 1\}$; otherwise, reassigning $\beta_{i}$ to $uu_{2}$ will take us back to Case~\ref{1Common}. Hence, we have $\{\kappa - \Delta, \kappa - \Delta + 1\} \cap \Upsilon(uu_{2}) \neq \emptyset$.

\begin{subsubsubcase}%
Suppose that $\{\kappa - \Delta, \kappa - \Delta + 1\} \subseteq \mathcal{U}(w_{1})$.
\end{subsubsubcase}
If $\{\kappa - \Delta, \kappa - \Delta + 1\} \subseteq \mathcal{U}(u_{2})$, then $\mathcal{U}(w_{1}) = \{1, 3, \kappa - \Delta, \kappa - \Delta + 1\} \cup \mathcal{A}(u_{2})$ and $\mathcal{U}(u_{2}) \cap (\mathcal{U}(w) \cup \mathcal{U}(u)) = \{\kappa - \Delta, \kappa - \Delta + 1, 2\}$; but reassigning $\alpha_{1}$ to $ww_{1}$ and $1$ to $uw$ results in an acyclic edge coloring of $G$. This implies that $|\{\kappa - \Delta, \kappa - \Delta + 1\} \cap \mathcal{U}(u_{2})| = 1$, say $\kappa - \Delta \in \mathcal{U}(u_{2})$. Hence, we have $\varepsilon_{i} = \kappa - \Delta$ and $\mathcal{A}(u_{2}) \subseteq \mathcal{U}(u_{3})$.

   Suppose that there exists no $(2, 1, u, w)$-critical path. Thus, there exists a $(\mu_{i}, \alpha_{i}, w, w_{1})$-critical path with $\mu_{i} \in \mathcal{U}(w) \setminus \{1, 2, 3\}$; otherwise, reassigning $\alpha_{i}$ to $ww_{1}$ and $1$ to $wu$ will result in an acyclic edge coloring of $G$. Note that $\mathcal{U}(w_{1}) \supseteq \{1, 3, \kappa - \Delta, \kappa - \Delta + 1, \mu_{1}\} \cup \mathcal{A}(u_{2})$, it follows that $\mathcal{U}(u_{2}) \cap (\mathcal{U}(u) \cup \mathcal{U}(w)) = \{2, \kappa - \Delta\}$ and $|\mathcal{U}(u_{2}) \cap C(wu)| = \Delta - 2$. Moreover, $\mathcal{U}(w_{1}) = \{1, 3, \kappa - \Delta, \kappa - \Delta + 1, \mu_{1}\} \cup \mathcal{A}(u_{2})$ and $|\mathcal{A}(u_{2})| = 1$, say $\mu_{1} = 4$. Thus, there exists a $(\kappa - \Delta, 1, u, u_{2})$-critical path; otherwise, reassigning $1$ to $uu_{2}$ will take us back to Subcase~\ref{2Common1}. So, we have $1 \in \mathcal{U}(u_{3})$. Furthermore, there exists a $(\kappa - \Delta, 4, u, u_{2})$-critical path; otherwise, reassigning $4$ to $uu_{2}$ and $\alpha_{1}$ to $wu$ results in an acyclic edge coloring of $G$. Hence, $\{1, 4, \kappa - \Delta\} \subseteq \mathcal{U}(u_{3})$. Recall that $|\mathcal{A}(u_{3})| \geq 2$ and $|\mathcal{A}(u_{2})| = 1$, it follows that $\mathcal{A}(w_{1}) \cap \mathcal{A}(u_{3}) \neq \emptyset$, say $\alpha_{1} \notin \mathcal{U}(u_{3})$. If $1 \notin \mathcal{U}(u_{4})$, then reassigning $1$ to $uu_{4}$ and $\alpha_{1}$ to $uw_{1}$ will take us back to Subcase~\ref{2Common2}. Thus, we have $1 \in \mathcal{U}(u_{4})$. If $2 \notin \mathbb{S}$, then reassigning $2, \beta_{1}$ and $\alpha_{1}$ to $uu_{3}, uu_{2}$ and $uw$ respectively results in an acyclic edge coloring of $G$. Thus $2 \in \mathbb{S}$. If $3 \notin \mathbb{S}$, then reassigning $3, \alpha_{1}$ and $\beta_{1}$ to $uu_{4}, uw_{1}$ and $uw$ respectively results in an acyclic edge coloring of $G$. Thus $3 \in \mathbb{S}$. If $5 \notin \mathbb{S}$, then there exists a $(5, \alpha_{1}, w, u_{2})$-alternating path, otherwise, reassigning $5$ to $uu_{2}$ and $\alpha_{1}$ to $uw$ results in an acyclic edge coloring of $G$; but reassigning $5, \beta_{1}$ and $\alpha_{1}$ to $uu_{3}, uu_{2}$ and $uw$ results in an acyclic edge coloring of $G$. Thus $5 \in \mathbb{S}$. Similarly, $\{5, 6, \dots, \deg_{G}(w) - 1\} \subseteq \mathbb{S}$. Therefore, we have
\[
\sum_{\theta \in \mathcal{U}(w) \cup \mathcal{U}(u)} \mul_{\mathbb{S}}(\theta) + |X| \geq 3|\{1\}| + 2|\{4, \kappa - \Delta\}| + |\{2, 3, \kappa - \Delta + 1, 5, 6, \dots, \deg_{G}(w) - 1\}| = \deg_{G}(w) + 5.
\]

   Suppose that there exists a $(2, 1, w, u)$-critical path. It follows that $\{1, 2, \kappa - \Delta\} \subseteq \mathcal{U}(u_{2}) \cap (\mathcal{U}(w) \cup \mathcal{U}(u))$. It is obvious that $\mathcal{A}(u_{2}) \subseteq \mathcal{U}(w_{1})$, thus $\mathcal{U}(w_{1}) = \{1, 3, \kappa - \Delta, \kappa - \Delta + 1\} \cup \mathcal{A}(u_{2})$, $\mathcal{U}(u_{2}) \cap (\mathcal{U}(w) \cup \mathcal{U}(u)) = \{1, 2, \kappa - \Delta\}$ and $|\mathcal{U}(u_{2}) \cap C(wu)| = \Delta - 3$. If $\mathcal{A}(w_{1}) \subseteq \mathcal{U}(u_{3})$, then $\mathcal{U}(u_{3}) = \mathcal{A}(w_{1}) \cup \mathcal{A}(u_{2}) \cup \{\kappa - \Delta\} = C(wu) \cup \{\kappa - \Delta\}$; but reassigning $2, \beta_{1}$ and $\alpha_{1}$ to $uu_{3}, uu_{2}$ and $uw$ respectively results in an acyclic edge coloring of $G$. So we may assume that $\mathcal{A}(w_{1}) \nsubseteq \mathcal{U}(u_{3})$ and $\alpha_{1} \notin \mathcal{U}(u_{3})$. If $3 \notin \mathbb{S}$, then reassigning $3, \alpha_{1}$ and $\beta_{1}$ to $uu_{4}, uw_{1}$ and $uw$ respectively results in an acyclic edge coloring of $G$. Thus, we have $3 \in \mathbb{S}$. For every color $\theta$ in $\mathcal{U}(w) \setminus \{3\}$, we have that $\theta \in \mathbb{S}$; otherwise, reassigning $\theta, \beta_{1}$ and $\alpha_{1}$ to $uu_{3}, uu_{2}$ and $uw$ respectively results in an acyclic edge coloring of $G$. If $1 \notin \mathcal{U}(u_{3}) \cup \mathcal{U}(u_{4})$, then reassigning $1$ to $uu_{4}$ and $\alpha_{1}$ to $uw_{1}$ will take us back to Subcase~\ref{2Common1}. Hence, the color $1$ appears exactly three times in $\mathbb{S}$.  If $\kappa - \Delta + 1$ appears at least twice in $\mathbb{S}$ or $|X| \geq 1$, then
\[
\sum_{\theta \in \mathcal{U}(w) \cup \mathcal{U}(u)} \mul_{\mathbb{S}}(\theta) + |X| \geq \deg_{G}(w) + 5.
\]
So we may assume that $\kappa - \Delta + 1$ appears precisely once (at $w_{1}$) and $X = \emptyset$. Note that $\beta_{1} \notin \mathcal{U}(u_{4})$. But reassigning $\beta_{1}$ to $uu_{4}$ and $\kappa - \Delta + 1$ to $uu_{2}$ will take us back to Case \ref{1Common}.

\begin{subsubsubcase}%
Now, we may assume that $\{\kappa - \Delta, \kappa - \Delta + 1\} \nsubseteq \mathcal{U}(w_{1})$ and $\kappa - \Delta  + 1 \notin \mathcal{U}(w_{1})$.
\end{subsubsubcase}

Thus, there exists a $(\kappa - \Delta, \alpha_{i}, u, w_{1})$-critical path for every $\alpha_{i}$; otherwise, reassigning $\alpha_{i}$ to $uw_{1}$ will take us back to Case~\ref{1Common}. It follows that $\kappa - \Delta \in \mathcal{U}(w_{1})$ and $\mathcal{A}(w_{1}) \subseteq \mathcal{U}(u_{3}) \cap \mathcal{U}(u_{2})$. If $\Upsilon(uu_{3}) \subseteq C(uw)$, then reassigning $\alpha_{1}$ to $uw_{1}$ and $1$ to $uu_{3}$ will take us back to Subcase~\ref{2Common2}. So we may assume that $\Upsilon(uu_{3}) \nsubseteq C(wu)$ and $C(wu) \nsubseteq \Upsilon(uu_{3})$.

(1) Suppose that $\mathcal{A}(u_{2}) \cap \mathcal{A}(u_{3}) = \emptyset$. It follows that $\mathcal{A}(w_{1}), \mathcal{A}(u_{2})$ and $\mathcal{A}(u_{3})$ are pairwise disjoint. Suppose that there exists no $(2, 1, u, w)$-critical path. Thus, there exists a $(\tau, \alpha_{1}, w, w_{1})$-critical path, where $\tau \in \mathcal{U}(w) \setminus \{1, 2, 3\}$; otherwise, reassigning $1$ to $uw$ and $\alpha_{1}$ to $ww_{1}$ results in an acyclic edge coloring of $G$. Since $\mathcal{U}(u_{3}) \supseteq \mathcal{A}(w_{1}) \cup \mathcal{A}(u_{2})$ and $C(wu) \nsubseteq \mathcal{U}(u_{3})$, it follows that $|\mathcal{A}(w_{1})| = \Delta - 3$, $|\mathcal{A}(u_{2})| = 1$ and $|\mathcal{U}(u_{3}) \cap (\mathcal{U}(w) \cup \mathcal{U}(u))| = 2$. If $1 \notin \mathcal{U}(u_{3})$, then there exists a $(\kappa - \Delta + 1, 1, u, u_{3})$-critical path; otherwise, reassigning $1$ to $uu_{3}$ and $\alpha_{1}$ to $uw_{1}$ will take us back to Subcase~\ref{2Common2}. Thus, $\Upsilon(uu_{3}) \cap (\mathcal{U}(w) \cup \mathcal{U}(u)) = \{1\}$ or $\{\kappa - \Delta + 1\}$. If there exists no $(\kappa - \Delta + 1, 3, u, u_{3})$-critical path, then reassigning $3, \alpha_{1}$ and $\beta_{1}$ to $uu_{3}, uw_{1}$ and $uw$ results in an acyclic edge coloring of $G$. Hence, there exists a $(\kappa - \Delta + 1, 3, u, u_{3})$-critical path and $\Upsilon(uu_{3}) \cap (\mathcal{U}(w) \cup \mathcal{U}(u)) = \{\kappa - \Delta + 1\}$. But reassigning $1$ to $uu_{2}$ will take us back to Subcase~\ref{2Common1}.

   Now, we consider the other subcase: suppose that there exists a $(2, 1, u, w)$-critical path and $1 \in \mathcal{U}(u_{2})$. Since $\mathcal{U}(u_{3}) \supseteq \mathcal{A}(w_{1}) \cup \mathcal{A}(u_{2})$ and $C(wu) \nsubseteq \mathcal{U}(u_{3})$, so we have that $|\mathcal{A}(w_{1})| = \Delta - 4$, $|\mathcal{A}(u_{2})| = 2$ and $\mathcal{U}(w_{1}) \cap (\mathcal{U}(w) \cup \mathcal{U}(u)) = \{1, 3, \kappa - \Delta\}$, $\mathcal{U}(u_{2}) \cap (\mathcal{U}(w) \cup \mathcal{U}(u)) = \{1, 2, \varepsilon_{1}\}$ and $|\mathcal{U}(u_{3}) \cap (\mathcal{U}(w) \cup \mathcal{U}(u))| = 2$. If $1 \in \mathcal{U}(u_{3})$, then $\mathcal{U}(u_{3}) \cap (\mathcal{U}(w) \cup \mathcal{U}(u)) = \{1, \kappa - \Delta\}$, and then reassigning $\beta_{1}, \alpha_{1}$ and $3$ to $uw, uw_{1}$ and $uu_{3}$ results in an cyclic edge coloring. Thus, $1 \notin \mathcal{U}(u_{3})$. There exists a $(\kappa - \Delta + 1, 1, u, u_{3})$-critical path; otherwise, reassigning $1$ to $uu_{3}$ and $\alpha_{1}$ to $uw_{1}$ will take us back to Subcase~\ref{2Common2}. This implies that $\mathcal{U}(u_{3}) \cap (\mathcal{U}(w) \cup \mathcal{U}(u)) = \{\kappa - \Delta, \kappa - \Delta + 1\}$ and $1$ appears three times in $\mathbb{S}$. There exists a $(\kappa - \Delta + 1, 3, u, u_{3})$-critical path, otherwise, reassigning $\beta_{1}, \alpha_{1}$ and $3$ to $uw, uw_{1}$ and $uu_{3}$ results in an acyclic edge coloring of $G$. Now, we have $\{1, 3\} \subseteq \Upsilon(uu_{4})$. If $\beta \in \mathcal{A}(u_{2}) \cap \mathcal{A}(u_{4})$, then $\varepsilon_{1} = \kappa - \Delta$ and there exists a $(\kappa - \Delta, \beta, u, u_{2})$-critical path; but reassigning $\beta$ to $uu_{4}$ and $\kappa - \Delta + 1$ to $uw$ results in an acyclic edge coloring of $G$. This implies that $\mathcal{A}(u_{2}) \subseteq \mathcal{U}(u_{4})$ and $\mathcal{A}(u_{2}) \subseteq X$. Suppose that $\{4, 5, \dots, \deg_{G}(w) - 1\} \nsubseteq \mathbb{S}$. So, by symmetry, we may assume that $4 \notin \mathbb{S}$. There exists a $(4, \beta_{1}, w, w_{1})$-alternating path; otherwise, reassigning $4$ to $uw_{1}$ and $\beta_{1}$ to $uw$ results in an acyclic edge coloring of $G$. But reassigning $4, \alpha_{1}$ and $\beta_{1}$ to $uu_{3}, uw_{1}$ and $uw$ results in an acyclic edge coloring of $G$. Hence, $\{3, 4, \dots, \deg_{G}(w) - 1\} \subseteq \mathbb{S}$.

\[
\sum_{\theta \in \mathcal{U}(w) \cup \mathcal{U}(u)} \mul_{\mathbb{S}}(\theta) + |X| \geq |\{3, 4, \dots, \deg_{G}(w) - 1\}| + 3|\{1\}| + |\{\varepsilon_{1}\}| + |\{\kappa - \Delta\}| + |\{\kappa - \Delta + 1\}| + |\mathcal{A}(u_{2})| \geq \deg_{G}(w) + 5.
\]

(2) So we may assume that $\mathcal{A}(u_{2}) \cap \mathcal{A}(u_{3}) \neq \emptyset$, say $\beta_{1} \in \mathcal{A}(u_{2}) \cap \mathcal{A}(u_{3})$. Thus, there exists a $(\kappa - \Delta + 1, \beta_{1}, u, u_{2})$-critical path; otherwise, reassigning $\beta_{1}$ to $uu_{2}$ will take us back to Case \ref{1Common}. So, we have $\kappa - \Delta + 1 \in \mathcal{U}(u_{2})$.

   Suppose that the color $1$ only appears once (at $w_{1}$) in $\mathbb{S}$. If there exists no $(3, 1, u, u_{2})$-critical path, then reassigning $1$ to $uu_{2}$ will take us back to Subcase~\ref{2Common1}. But if there exists a $(3, 1, u, u_{2})$-critical path, then reassigning $1$ to $uu_{4}$ and $\beta_{1}$ to $uu_{2}$ will take us back to Subcase~\ref{2Common1} again. Hence, the color $1$ appears at least twice in $\mathbb{S}$.

   If $2 \notin \mathbb{S}$, then reassigning $2, \beta_{1}$ and $\alpha_{1}$ to $uu_{4}, uu_{2}$ and $uw$ respectively results in an acyclic edge coloring of $G$. If $3 \notin \mathbb{S}$, then reassigning $3, \alpha_{1}$ and $\beta_{1}$ to $uu_{3}, uw_{1}$ and $uw$ respectively, results in an acyclic edge coloring of $G$. Suppose that $4 \notin \mathbb{S}$. There exists a $(4, \beta_{1}, w, w_{1})$-alternating path; otherwise, reassigning $4$ to $uw_{1}$ and $\beta_{1}$ to $uw$ results in an acyclic edge coloring of $G$. Now, reassigning $4, \alpha_{1}$ and $\beta_{1}$ to $uu_{3}, uw_{1}$ and $uw$ respectively results in an acyclic edge coloring of $G$. Thus, $\{2, 3, 4\} \subseteq \mathbb{S}$. By symmetry, we have that $\mathcal{U}(w) \setminus \{1, 2, 3\} \subseteq \mathbb{S}$.

   Suppose that $\kappa - \Delta$ appears only once (at $w_{1}$) in $\mathbb{S}$. Thus, there exists a $(3, \kappa - \Delta, u, w)$-critical path; otherwise, reassigning $\kappa - \Delta$ to $uw$ and $\beta_{1}$ to $uu_{3}$ results in an acyclic edge coloring of $G$. But reassigning $\beta_{1}$ to $uu_{3}$ and $\kappa - \Delta$ to $uu_{2}$ will take us back to Case 1. Hence, the color $\kappa - \Delta$ appears at least twice in $\mathbb{S}$.

   Note that $|\mathcal{A}(w_{1})| \geq 2$. If $\mathcal{A}(w_{1}) \subseteq \mathcal{U}(u_{4})$, then $ \mathcal{A}(w_{1}) \subseteq X$, and then
\[
\sum_{\theta \in \mathcal{U}(w) \cup \mathcal{U}(u)} \mul_{\mathbb{S}}(\theta) + |X| \geq |\{\kappa - \Delta + 1, 2, 3, \dots, \deg_{G}(w) - 1\}| + 2|\{1, \kappa - \Delta\}| + |\mathcal{A}(w_{1})| \geq \deg_{G}(w) + 5.
\]

   So we may assume that $\mathcal{A}(w_{1}) \nsubseteq \mathcal{U}(u_{4})$, say $\alpha_{1} \notin \mathcal{U}(u_{4})$. There exists a $(2, \kappa - \Delta + 1, w, u)$-critical path; otherwise, reassigning $\alpha_{1}$ to $uu_{4}$ and $\kappa - \Delta + 1$ to $uw$ results in an acyclic edge coloring of $G$. Consequently, there exists a $(\kappa - \Delta, \kappa - \Delta + 1, u, w_{1})$-critical path and $\kappa - \Delta + 1 \in \mathcal{U}(u_{3})$; otherwise, reassigning $\alpha_{1}$ to $uu_{4}$ and $\kappa - \Delta + 1$ to $uw_{1}$ will take us back to Case 1. Hence, the color $\kappa - \Delta + 1$ appears exactly twice in $\mathbb{S}$.

   Suppose that there exists no $(2, 1, u, w)$-critical path. Thus, there exists a $(\tau, \alpha_{1}, w, w_{1})$-critical path with $\tau \in \mathcal{U}(w) \setminus \{1, 2, 3\}$; otherwise, reassigning $1$ to $uw$ and $\alpha_{1}$ to $ww_{1}$ results in an acyclic edge coloring of $G$. If $\tau$ only appears once (at $w_{1}$) in $\mathbb{S}$, then reassigning $\tau, \alpha_{1}$ and $\beta_{1}$ to $uu_{3}, uw_{1}$ and $uw$ respectively results in an acyclic edge coloring of $G$. Hence, the color $\tau$ appears at least twice in $\mathbb{S}$. Hence,
\[
\sum_{\theta \in \mathcal{U}(w) \cup \mathcal{U}(u)} \mul_{\mathbb{S}}(\theta) + |X| \geq |\{2, 3, \dots, \deg_{G}(w) - 1\}| + 2|\{1, \kappa - \Delta, \kappa - \Delta + 1\}| + |\{\tau\}|= \deg_{G}(w) + 5.
\]

Suppose there exists a $(2, 1, u, w)$-critical path and $1 \in \mathcal{U}(u_{2})$. If $1 \notin \mathcal{U}(u_{3}) \cup \mathcal{U}(u_{4})$, then reassigning $1$ to $uu_{3}$ and $\alpha_{1}$ to $uw_{1}$ will take us back to Subcase~\ref{2Common1}. Hence, the color $1$ appears at least three times in $\mathbb{S}$,
\[
\sum_{\theta \in \mathcal{U}(w) \cup \mathcal{U}(u)} \mul_{\mathbb{S}}(\theta) + |X| \geq |\{2, 3, \dots, \deg_{G}(w) - 1\}| + 3|\{1\}| + 2|\{\kappa - \Delta, \kappa - \Delta + 1\}| = \deg_{G}(w) + 5.
\]

\begin{subsubcase}%
Neither the color on $w_{1}w$ nor the color on $w_{1}u$ is a common color.
\end{subsubcase}
By symmetry, assume that $\phi(uw_{1}) = \kappa - \Delta$, $\phi(uu_{2}) = 2$, $\phi(uu_{3}) = 3$ and $\phi(uu_{4}) = \kappa - \Delta + 1$.

If $\Upsilon(uu_{2}) \subseteq C(wu)$, then reassigning $1$ to $uu_{2}$ will take us back to Subcase~\ref{2Common2}. This implies that $\Upsilon(uu_{2}) \nsubseteq C(wu)$ and $\mathcal{A}(u_{2}) \neq \emptyset$. Thus, there exists a $(\varepsilon_{i}, \beta_{i}, u, u_{2})$-critical path with $\varepsilon_{i} \in \{\kappa - \Delta, \kappa - \Delta + 1\}$; otherwise, reassigning $\beta_{i}$ to $uu_{2}$ will take us back to Case~\ref{1Common}. Similarly, we have that $\Upsilon(uu_{3}) \nsubseteq C(wu)$ and $\mathcal{A}(u_{3}) \neq \emptyset$, and thus there exists a $(m_{i}, \xi_{i}, u, u_{3})$-critical path with $m_{i} \in \{\kappa - \Delta, \kappa - \Delta + 1\}$. If $1 \notin \mathcal{U}(u_{2}) \cup \mathcal{U}(u_{3})$, then reassigning $1$ to $uu_{2}$ will create a $(1, \kappa - \Delta + 1)$-dichromatic cycle containing $uu_{2}$, otherwise, it will take us back to Subcase~\ref{2Common2}; but reassigning $1$ to $uu_{3}$ will take us back to Subcase~\ref{2Common2} again. It follows that $1 \in \mathcal{U}(u_{2}) \cup \mathcal{U}(u_{3})$ and $1$ appears at least twice in $\mathbb{S}$.

\begin{subsubsubcase}%
Suppose that $\mathcal{A}(u_{2}) \cup \mathcal{A}(u_{3}) \nsubseteq \mathcal{U}(w_{1})$ and $\beta_{1} = \alpha_{1} \notin \mathcal{U}(w_{1})$.
\end{subsubsubcase}
Hence, there exists a $(3, \beta_{1}, u, w)$-critical path and $(\kappa - \Delta + 1, \beta_{1}, u, u_{2})$-critical path, thus $\kappa - \Delta + 1 \in \mathcal{U}(u_{2})$.

There exists a $(2, \kappa - \Delta, u, w)$- or $(3, \kappa - \Delta, u, w)$-critical path; otherwise, reassigning $\beta_{1}$ to $uw_{1}$ and $\kappa - \Delta$ to $uw$ results in an acyclic edge coloring of $G$. It follows that $\kappa - \Delta \in \mathcal{U}(u_{2}) \cup \mathcal{U}(u_{3})$. Moreover, $\kappa - \Delta$ appears at least twice in $\mathbb{S}$; otherwise, assume that $\kappa - \Delta$ only appears at $u_{2}$, thus reassigning $\kappa - \Delta$ to $uu_{3}$ and $\beta_{1}$ to $uw_{1}$ will take us back to Case \ref{1Common}.

If $2 \notin \mathbb{S}$, then reassigning $\beta_{1}, 2$ and $\xi_{1}$ to $uu_{2}, uu_{4}$ and $uw$ respectively results in an acyclic edge coloring of $G$. Thus $2 \in \mathbb{S}$.

Suppose that $4 \notin \mathbb{S}$. There exists a $(4, \xi_{1}, w, u_{2})$-alternating path for every $\xi_{i} \in \mathcal{A}(u_{3})$; otherwise, reassigning $4$ to $uu_{2}$ and $\xi_{1}$ to $uw$ results in an acyclic edge coloring of $G$. Now, reassigning $\beta_{1}, 4$ and $\xi_{1}$ to $uu_{2}, uu_{4}$ and $uw$ respectively results in an acyclic edge coloring of $G$ again. Hence, the color $4$ appears in $\mathbb{S}$. Similarly, we can prove that $\mathcal{U}(w) \setminus \{1, 2, 3\} \subseteq \mathbb{S}$.

Suppose that $3 \notin \mathbb{S}$. If there exists no $(\kappa - \Delta + 1, \xi_{i}, u, u_{3})$-critical path, then reassigning $3$ to $uw_{1}$ and $\xi_{i}$ to $uu_{3}$ will take us back to Subcase~\ref{2Common3}. Hence, there exists a $(\kappa - \Delta + 1, \xi_{i}, u, u_{3})$-critical path for every $\xi_{i} \in \mathcal{A}(u_{3})$, and then $\kappa - \Delta + 1 \in \mathcal{U}(u_{3})$ and $\mathcal{A}(u_{3}) \subseteq \mathcal{U}(u_{4})$. If there exists no $(\kappa - \Delta, \xi_{i}, u, u_{3})$-critical path, then reassigning $3, \xi_{i}$ and $\beta_{1}$ to $uu_{4}, uu_{3}$ and $uw$ respectively, results in an acyclic edge coloring of $G$. Hence, both $(\kappa - \Delta, \xi_{i}, u, u_{3})$- and $(\kappa - \Delta + 1, \xi_{i}, u, u_{3})$-critical path exist for every $\xi_{i} \in \mathcal{A}(u_{3})$, and then $\{\kappa - \Delta, \kappa - \Delta + 1\} \subseteq \mathcal{U}(u_{3})$ and $\mathcal{A}(u_{3}) \subseteq \mathcal{U}(w_{1}) \cap \mathcal{U}(u_{4})$. Clearly, every color in $\mathcal{A}(u_{3})$ appears precisely three times in $\mathbb{S}$. Therefore,

\[
\sum_{\theta \in \mathcal{U}(w) \cup \mathcal{U}(u)} \mul_{\mathbb{S}}(\theta) + |X| \geq |\{2\} \cup \{4, \dots, \deg_{G}(w) - 1\}| + 2|\{1, \kappa - \Delta, \kappa - \Delta + 1\}| + |\mathcal{A}(u_{3})| \geq \deg_{G}(w) + 5.
\]
So, in the following, we may assume that $3 \in \mathbb{S}$.

If there exists a $(2, 1, u, w)$-critical path (or $(3, 1, u, w)$-critical path) and $1$ appears only twice in $\mathbb{S}$, then reassigning $1$ to $uu_{3}$ (to $uu_{2}$) will take us back to Subcase~\ref{2Common2}. In other words, if there exists a $(2, 1, u, w)$-critical path or $(3, 1, u, w)$-critical path, then the color $1$ appears at least three times in $\mathbb{S}$.

Suppose that neither $(2, 1, u, w)$-critical path nor $(3, 1, u, w)$-critical path exists. If there exists no $(\tau, \beta_{1}, w, w_{1})$-critical path with some $\tau \in \mathcal{U}(w) \setminus \{1, 3\}$, then reassigning $1$ to $uw$ and $\beta_{1}$ to $ww_{1}$ results in an acyclic edge coloring of $G$. Hence, there exists a $(\tau, \beta_{1}, w, w_{1})$-critical path with some $\tau \in \mathcal{U}(w) \setminus \{1, 3\}$. Suppose that there exists a $(2, \beta_{1}, w, w_{1})$-critical path and $2$ appears only once in $\mathbb{S}$. This implies that there exists a $(\kappa - \Delta, 2, u, u_{4})$-critical path; otherwise reassigning $2, \beta_{1}$ and $\xi_{1}$ to $uu_{4}, uu_{2}$ and $uw$ respectively results in an acyclic edge coloring of $G$. But reassigning $2, \beta_{1}, \xi_{1}$ and $\zeta^{*}$ ($\zeta^{*} = \beta_{2}$ if $|\mathcal{A}(u_{2})| \geq 2$, otherwise, $\zeta^{*} = \kappa - \Delta$) to $uu_{4}, uw_{1}, uw$ and $uu_{2}$ respectively, and we obtain an acyclic edge coloring of $G$. Thus, if there exists a $(2, \beta_{1}, w, w_{1})$-critical path, then the color $2$ appears at least twice in $\mathbb{S}$. Suppose that there exists a $(4, \beta_{1}, w, w_{1})$-critical path and $4$ only appears once in $\mathbb{S}$. Hence, there is a $(\kappa - \Delta, 4, u, u_{3})$-critical path, otherwise, reassigning $4$ to $uu_{3}$ and $\beta_{1}$ to $uw$ results in an acyclic edge coloring of $G$. Now, reassigning $4, \beta_{1}$ and $\xi_{1}$ to $uu_{4}, uu_{2}$ and $uw$ will create a $(4, \xi_{1})$-dichromatic cycle containing $uw$; otherwise, the resulting coloring is an acyclic edge coloring of $G$. But reassigning $4$ to $uu_{2}$ and $\xi_{1}$ to $uw$ results in an acyclic edge coloring of $G$. Thus, if there exists a $(4, \beta_{1}, w, w_{1})$-critical path, then the color $4$ appears at least twice in $\mathbb{S}$. Similarly, if there exists a $(\tau, \beta_{1}, w, w_{1})$-critical path with $\tau \geq 4$, then the color $\tau$ appears at least twice in $\mathbb{S}$. Therefore, the color $\tau$ appears at least twice in $\mathbb{S}$.

By the above arguments, regardless of the existence of $(2, 1, u, w)$-critical path or $(3, 1, u, w)$-critical path, if $\kappa - \Delta + 1$ appears at least twice or $|X| \geq 1$, then

\[
\sum_{\theta \in \mathcal{U}(w) \cup \mathcal{U}(u)} \mul_{\mathbb{S}}(\theta) + |X| \geq \deg_{G}(w) + 5.
\]

So we may assume that the color $\kappa - \Delta + 1$ appears only once (at $u_{2}$) in $\mathbb{S}$ and $X = \emptyset$. If $\mathcal{A}(u_{3}) \nsubseteq \mathcal{U}(w_{1})$, say $\xi_{1} \notin \mathcal{U}(w_{1})$, then there exists a $(2, \xi_{1}, u, w)$-critical path and $(\kappa - \Delta + 1, \xi_{1}, u, u_{3})$-critical path, and then $\kappa - \Delta + 1 \in \mathcal{U}(u_{3})$, a contradiction. So we may assume that $\mathcal{A}(u_{3}) \subseteq \mathcal{U}(w_{1})$ and $\mathcal{A}(u_{3}) \cap \mathcal{U}(u_{4}) = \emptyset$.

Clearly, there exists a $(2, \xi_{1}, u, w)$-critical path. Thus, there exists a $(\kappa - \Delta, \xi_{1}, u, u_{3})$-critical path; otherwise, reassigning $\xi_{1}$ to $uu_{3}$ will take us back to Case \ref{1Common}. Hence, there exists a $(2, \kappa - \Delta + 1, u, w)$-critical path; otherwise, reassigning $\xi_{1}$ to $uu_{4}$ and $\kappa - \Delta + 1$ to $uw$ will result in an acyclic edge coloring of $G$. Now, reassigning $\xi_{1}$ to $uu_{4}$ and $\kappa - \Delta + 1$ to $uu_{3}$ will take us back to Case \ref{1Common}.

\begin{subsubsubcase}%
$\mathcal{A}(u_{2}) \cup \mathcal{A}(u_{3}) \subseteq \mathcal{U}(w_{1})$.
\end{subsubsubcase}
Firstly, suppose that $\mathcal{A}(u_{2}) \cup \mathcal{A}(u_{3}) \nsubseteq \mathcal{U}(u_{4})$ and $\beta_{1} = \zeta_{1} \notin \mathcal{U}(u_{4})$. Hence, there exists a $(3, \beta_{1}, u, w)$-critical path and a $(\kappa - \Delta, \beta_{1}, u, u_{2})$-critical path, and then $\kappa - \Delta \in \mathcal{U}(u_{2})$.

If $\{2, 3, \kappa - \Delta + 1\} \cap \mathcal{U}(w_{1}) = \emptyset$, then reassigning $\beta_{1}$ to $uu_{2}$ and $2$ to $uw_{1}$ will take us back to Subcase \ref{2Common3}. Hence, $\{2, 3, \kappa - \Delta + 1\} \cap \mathcal{U}(w_{1}) \neq \emptyset$. Recall that $1 \in \mathcal{U}(u_{2}) \cup \mathcal{U}(u_{3})$. Since $\mathcal{A}(u_{2}) \cup \mathcal{A}(u_{3}) \subseteq \mathcal{U}(w_{1})$, it follows that $\deg_{G}(w_{1}) = 6$, $\deg_{G}(w) = \kappa - \Delta$, and $|\mathcal{A}(u_{2})| + |\mathcal{A}(u_{3})| = 3$. Furthermore, we have that $\{\kappa - \Delta, \kappa - \Delta + 1\} \cap \mathcal{U}(u_{2}) = \{\kappa - \Delta\}$ and $|\{\kappa - \Delta, \kappa - \Delta + 1\} \cap \mathcal{U}(u_{3})| = 1$. Thus, there exists a $(3, \kappa - \Delta + 1, w, u)$-critical path; otherwise reassigning $\beta_{1}$ to $uu_{4}$ and $\kappa- \Delta + 1$ to $uw$ will result in an acyclic edge coloring of $G$. Hence, $\{\kappa - \Delta, \kappa - \Delta + 1\} \cap \mathcal{U}(u_{3}) = \{\kappa - \Delta + 1\}$.

If $1 \notin \mathcal{U}(u_{2})$, then $\mathcal{U}(u_{2}) \cap (\mathcal{U}(w) \cup \mathcal{U}(u)) = \{2, \kappa - \Delta\}$, but reassigning $1$ to $uu_{2}$ will take us back to Subcase \ref{2Common2}. This implies that $\mathcal{U}(u_{2}) \cap (\mathcal{U}(w) \cup \mathcal{U}(u)) = \{1, 2, \kappa - \Delta\}$ and $\mathcal{U}(u_{3}) \cap (\mathcal{U}(w) \cup \mathcal{U}(u)) = \{3, \kappa - \Delta + 1\}$. Now, there is a $(\kappa - \Delta + 1, 1, u, u_{3})$-critical path; otherwise, reassigning $1$ to $uu_{3}$ will take us back to Subcase \ref{2Common2}. Thus, there exists a $(\kappa- \Delta, \kappa- \Delta + 1, u, u_{2})$-critical path; otherwise, reassigning $\beta_{1}$ to $uu_{4}$ and $\kappa- \Delta + 1$ to $uu_{2}$ will take us back to Case~\ref{1Common}. Hence, $\mathcal{U}(w_{1}) \cap (\mathcal{U}(w) \cup \mathcal{U}(u)) = \{1, \kappa - \Delta, \kappa- \Delta + 1\}$. Moreover, there exists a $(\kappa - \Delta + 1, 2, u, w_{1})$-critical path; otherwise, reassigning  $\beta_{1}, 2$ and $\kappa - \Delta$ to $uu_{2}$, $uw_{1}$ and $uw$ respectively, results in an acyclic edge coloring of $G$. It is obvious that $2\in \mathcal{U}(u_{4})$. If $\kappa - \Delta \notin \mathcal{U}(u_{4})$, then reassigning $\kappa - \Delta, \beta_{1}, 2$ and $\xi_{1}$ to $uu_{4}, uu_{2}, uw_{1}$ and $uw$ respectively, results in an acyclic edge coloring of $G$. Thus, $\kappa - \Delta \in \mathcal{U}(u_{4})$. Recall that $\{1, 2\} \subseteq \mathcal{U}(u_{4})$. If there exists a color $\tau$ in $\mathcal{U}(w)\setminus \mathcal{U}(u_{4})$, then reassigning $\tau$, $\xi_{1}$ and $\beta_{1}$ to $uu_{4}, uu_{3}$ and $uw$ respectively, will result in an acyclic edge coloring of $G$. Hence, $\mathcal{U}(w) \subseteq \mathcal{U}(u_{4})$. Then
\[
 \sum_{\theta \in \mathcal{U}(w) \cup \mathcal{U}(u)} \mul_{\mathbb{S}}(\theta) + |X| \geq |\mathcal{U}(w)| + 2|\{1, \kappa - \Delta, \kappa - \Delta + 1\}| = \deg_{G}(w) + 5.
\]

Secondly, suppose that $\mathcal{A}(u_{2}) \cup \mathcal{A}(u_{3}) \subseteq \mathcal{U}(u_{4})$. Thus, every color in $\mathcal{A}(u_{2}) \cup \mathcal{A}(u_{3})$ appears three times in $\mathbb{S}$, and then $\mathcal{A}(u_{2}) \cup \mathcal{A}(u_{3}) \subseteq X$. Recall that $1 \in \mathcal{U}(u_{2}) \cup \mathcal{U}(u_{3})$. Since $\mathcal{A}(u_{2}) \cup \mathcal{A}(u_{3}) \subseteq \mathcal{U}(w_{1})$, it follows that $|\{\kappa - \Delta, \kappa - \Delta + 1\} \cap \mathcal{U}(u_{2})| = 1$ or $|\{\kappa - \Delta, \kappa - \Delta + 1\} \cap \mathcal{U}(u_{3})| = 1$.

Suppose that $1 \in \Upsilon(uu_{2}) \cap \Upsilon(uu_{3})$. It follows that $\deg_{G}(w) = 6$ and $\mathcal{U}(w_{1}) = \{1, \kappa - \Delta\} \cup \mathcal{A}(u_{2}) \cup \mathcal{A}(u_{3})$. Moreover, we have that $|\{\kappa - \Delta, \kappa - \Delta + 1\} \cap \Upsilon(uu_{2})| = 1$ and $|\{\kappa - \Delta, \kappa - \Delta + 1\} \cap \Upsilon(uu_{3})| = 1$. If $\kappa - \Delta + 1 \notin \Upsilon(uu_{2})$, then reassigning $\beta_{1}, 2$ and $\xi_{1}$ to $uu_{2}, uw_{1}$ and $wu$ respectively results in an acyclic edge coloring of $G$. So we have that $\kappa - \Delta + 1 \in \Upsilon(uu_{2})$. Similarly, we have that $\kappa - \Delta + 1\in \Upsilon(uu_{3})$. But reassigning $\alpha_{1}$ to $uw_{1}$ and $\kappa - \Delta$ to $wu$ results in an acyclic edge coloring of $G$.

So we may assume that $1 \notin \Upsilon(uu_{2}) \cap \Upsilon(uu_{3})$ and $1 \in \Upsilon(uu_{2})$. If there is a $(2, 1, u, u_{3})$-critical path, then $\mathcal{U}(w_{1}) = \{1, \kappa - \Delta\} \cup \mathcal{A}(u_{2}) \cup \mathcal{A}(u_{3})$ and $2 \in \mathcal{U}(u_{3})$, but reassigning $\alpha_{1}$ to $ww_{1}$ and $1$ to $wu$ results in an acyclic edge coloring of $G$. Hence, there exists no $(2, 1, u, u_{3})$-critical path. Thus, there exists a $(\kappa - \Delta + 1, 1, u, u_{3})$-critical path, otherwise, reassigning $1$ to $uu_{3}$ will take us back to Subcase~\ref{2Common2}. This implies that the color $1$ appears at least three times in $\mathbb{S}$.

Suppose that $3 \notin \mathbb{S}$. Thus, there exists a $(2, \kappa - \Delta + 1, w, u)$-critical path; otherwise, reassigning $3, 1$ and $\kappa - \Delta + 1$ to $uu_{4}, uu_{3}$ and $uw$ respectively, results in an acyclic edge coloring of $G$. Hence, $\kappa - \Delta + 1 \in \mathcal{U}(u_{2}) \cap \mathcal{U}(u_{3})$. If $\kappa - \Delta \notin \mathcal{U}(u_{3})$, then reassigning $3, \xi_{1}$ and $\beta_{1}$ to $uu_{4}, uu_{3}$ and $uw$ respectively, results in an acyclic edge coloring of $G$. So we may assume that $\kappa - \Delta \in \mathcal{U}(u_{3})$. Hence, $|\mathcal{A}(u_{2})| = |\mathcal{A}(u_{3})| = 2$ and $\mathcal{U}(w_{1}) = \{1, \kappa - \Delta\} \cup \mathcal{A}(u_{2}) \cup \mathcal{U}(u_{3})$. Now, reassigning $3, 1, \beta_{1}$ and $\alpha_{1}$ to $uu_{4}, uu_{3}, uw$ and $ww_{1}$, results in an acyclic edge coloring of $G$. Therefore, we can conclude that $3 \in \mathbb{S}$.

Suppose that $4 \notin \mathbb{S}$. Thus, there is a $(4, \beta_{1}, w, u_{3})$-alternating path; otherwise, reassigning $4$ to $uu_{3}$ and $\beta_{1}$ to $uw$ will result in an acyclic edge coloring of $G$. Similarly, there exists a $(4, \xi_{1}, w, u_{2})$-alternating path. Moreover, there exists a $(\kappa - \Delta, \xi_{1}, u, u_{3})$-critical path; otherwise, reassigning $4$, $\xi_{1}$ and $\beta_{1}$ to $uu_{4}$, $uu_{3}$ and $uw$ respectively, results in an acyclic edge coloring of $G$. Thus, $\mathcal{U}(u_{3}) \cap (\mathcal{U}(w) \cup \mathcal{U}(u)) = \{3, \kappa - \Delta, \kappa - \Delta + 1\}$, $|\mathcal{A}(u_{3})| = 2$ and $|\mathcal{A}(u_{2})| = 2$. Hence, $|\mathcal{U}(u_{2}) \cap \{\kappa - \Delta, \kappa - \Delta + 1\}| = 1$. If $\kappa - \Delta \notin \mathcal{U}(u_{2})$, then reassigning $4, \beta_{1}$ and $\xi_{1}$ to $uu_{4}, uu_{2}$ and $uw$ respectively, results in an acyclic edge coloring of $G$. Hence, we have that $\mathcal{U}(u_{2}) \cap (\mathcal{U}(w) \cup \mathcal(u)) = \{1, 2, \kappa - \Delta\}$. But reassigning $\xi_{1}, 4$ and $\beta_{1}$ to $uw, uw_{1}$ and $uu_{2}$ respectively, results in an acyclic edge coloring of $G$. So, $4 \in \mathbb{S}$. Similarly, we have that $\mathcal{U}(w) \setminus \{1, 2, 3\} \subseteq \mathbb{S}$.

Recall that $|\mathcal{A}(u_{2}) \cup \mathcal{A}(u_{3})| \geq 3$, $|\{\kappa - \Delta, \kappa - \Delta + 1\} \cap \mathcal{U}(u_{2})| \geq 1$ and $|\{\kappa - \Delta, \kappa - \Delta + 1\} \cap \mathcal{U}(u_{3})| \geq 1$. Hence,

\[
\sum_{\theta \in \mathcal{U}(w) \cup \mathcal{U}(u)} \mul_{\mathbb{S}}(\theta) + |X| \geq |\{3, 4, \dots, \deg_{G}(w) - 1\}| + 3|\{1\}| + 1 + 1 + |\mathcal{A}(u_{2}) \cup \mathcal{A}(u_{3})|\geq \deg_{G}(w) + 5.
\]

\begin{subcase}\label{3Common}%
$\mathcal{U}(w) \cap \mathcal{U}(u) = \{\lambda_{1}, \lambda_{2}, \lambda_{3}\}$ and $w_{1} = u_{1}$. Note that $|C(wu)| \geq \Delta$.
\end{subcase}

\begin{subsubcase}\label{3Common1}%
The color on $uw_{1}$ is a common color.
\end{subsubcase}

By symmetry, assume that $\phi(uw_{1}) = \lambda_{1}$, $\phi(uu_{2}) = \lambda_{2}$, $\phi(uu_{3}) = \lambda_{3}$ and $\phi(uu_{4}) = \kappa - \Delta$.

If $\Upsilon(uu_{2}) \subseteq C(wu)$, then reassigning $\beta_{1}$ to $uu_{2}$ will take us back to Subcase~\ref{2Common}. So we have that $\Upsilon(uu_{2}) \nsubseteq C(wu)$ and $|\mathcal{U}(u_{2}) \cap C(wu)| \leq \Delta - 2$; similarly, we also have that $|\mathcal{U}(u_{3}) \cap C(wu)| \leq \Delta - 2$. If $\mathcal{U}(w_{1}) \cap (\mathcal{U}(w) \cup \mathcal{U}(u)) = \{1, \lambda_{1}\}$, then reassigning $\alpha_{1}$ to $uw_{1}$ will take us back to Subcase~\ref{2Common} again. Hence, $|\mathcal{U}(w_{1}) \cap (\mathcal{U}(w) \cup \mathcal{U}(u))| \geq 3$. By \autoref{CapEmpty}, we have $\mathcal{A}(u_{2}) \cup \mathcal{A}(u_{3}) \subseteq \mathcal{U}(w_{1})$ and $\mathcal{A}(u_{2}) \cap \mathcal{A}(u_{3}) = \emptyset$. Further, we have that $|\mathcal{U}(w_{1})| \geq 3 + |\mathcal{A}(u_{2})| + |\mathcal{A}(u_{3})| > \deg_{G}(w_{1})$, which is a contradiction.

\begin{subsubcase}\label{3Common2}%
The color on $uw_{1}$ is not a common color, but the color on $ww_{1}$ is a common color.
\end{subsubcase}

By symmetry, assume that $\phi(uw_{1}) = \kappa - \Delta$, $\phi(uu_{2}) = 2$, $\phi(uu_{3}) = 3$ and $\phi(uu_{4}) = 1$.

If $\Upsilon(uu_{2}) \subseteq C(wu)$, then reassigning $\beta_{1}$ to $uu_{2}$ will take us back to Subcase~\ref{2Common}. So we have that $\Upsilon(uu_{2}) \nsubseteq C(wu)$ and $|\mathcal{U}(u_{2}) \cap C(wu)| \leq \Delta - 2$; similarly, we also have that $|\mathcal{U}(u_{3}) \cap C(wu)| \leq \Delta - 2$ and $|\mathcal{U}(u_{4}) \cap C(wu)| \leq \Delta - 2$.

If $\mathcal{U}(w_{1}) \cap (\mathcal{U}(w) \cup \mathcal{U}(u)) = \{1, \kappa - \Delta\}$, then reassigning $\alpha_{1}$ to $ww_{1}$ will take us back to Subcase~\ref{2Common} again. Hence, $|\mathcal{U}(w_{1}) \cap (\mathcal{U}(w) \cup \mathcal{U}(u))| \geq 3$.

Furthermore, we have that $\mathcal{A}(u_{2}) \cup \mathcal{A}(u_{3}) \nsubseteq \mathcal{U}(w_{1})$. Otherwise, if $\deg_{G}(w) \leq \kappa - \Delta$, then $|\mathcal{U}(w_{1})| \geq 3 + |\mathcal{A}(u_{2})| + |\mathcal{A}(u_{3})| \geq 3 + 2 + 2 > 6$; and if $\deg_{G}(w) \leq \kappa - \Delta - 1$, then $|\mathcal{U}(w_{1})| \geq 3 + |\mathcal{A}(u_{2})| + |\mathcal{A}(u_{3})| \geq 3 + 3 + 3 > 7$. Without loss of generality, assume that $\beta_{1} \notin \mathcal{U}(w_{1})$. Since $\beta_{1} \notin \mathcal{U}(w_{1}) \cup \mathcal{U}(u_{2})$, it follows that there exists a $(3, \beta_{1}, u, w)$-critical path. There exists a $(1, \beta_{1}, u, u_{2})$-critical path; otherwise, reassigning $\beta_{1}$ to $uu_{2}$ will take us back to Subcase~\ref{2Common}. Hence, $1 \in \mathcal{U}(u_{2})$ and $1$ appears at least twice in $\mathbb{S}$.

There exists a $(2, \kappa - \Delta, u, w)$- or $(3, \kappa - \Delta, u, w)$-critical path; otherwise, reassigning $\kappa - \Delta$ to $uw$ and $\beta_{1}$ to $uw_{1}$ will result in an acyclic edge coloring of $G$. If $\kappa - \Delta$ appears only once in $\mathbb{S}$, then reassigning $\beta_{1}$ to $uw_{1}$ and $\kappa - \Delta$ to $uu_{4}$ will take us back to Subcase~\ref{2Common}. Hence, the color $\kappa - \Delta$ appears at least twice in $\mathbb{S}$.

Let $t \in \mathcal{U}(w) \setminus \{1, 3\}$. If $t \notin \mathbb{S}$, then reassigning $\beta_{1}$ to $uu_{2}$ and $t$ to $uu_{4}$ will take us back to Subcase~\ref{2Common}. Hence, we have that $\mathcal{U}(w) \setminus \{1, 3\} \subseteq \mathbb{S}$.

If $\mathcal{A}(u_{3}) \subseteq \mathcal{U}(w_{1})$, then every color in $\mathcal{A}(u_{3})$ appears precisely three times in $\mathbb{S}$, and then
\[
\sum_{\theta \in \mathcal{U}(w) \cup \mathcal{U}(u)} \mul_{\mathbb{S}}(\theta) + |X| \geq |\{2, 4, 5, \dots, \deg_{G}(w) - 1\}| + 2|\{1, \kappa - \Delta\}| + |\mathcal{A}(u_{3})| \geq \deg_{G}(w) + 3.
\]

So we may assume that $\mathcal{A}(u_{3}) \nsubseteq \mathcal{U}(w_{1})$ and $\xi_{1} \notin \mathcal{U}(w_{1}) \cup \mathcal{U}(u_{3})$. Similar to above, we can prove that there exists a $(2, \xi_{1}, w, u)$- and $(1, \xi_{1}, u, u_{3})$-critical path, and then $1$ appears precisely three times in $\mathbb{S}$. If $3 \notin \mathbb{S}$, then reassigning $3$ to $uu_{4}$ and $\xi_{1}$ to $uu_{3}$ will take us back to Subcase~\ref{2Common}. Thus, the color $3$ appears at least once in $\mathbb{S}$. Therefore, we have

\[
\sum_{\theta \in \mathcal{U}(w) \cup \mathcal{U}(u)} \mul_{\mathbb{S}}(\theta) + |X| \geq |\{2, 3, \dots, \deg_{G}(w) - 1\}| + 2 |\{\kappa - \Delta \}| + 3|\{1\}| = \deg_{G}(w) + 3.
\]

\begin{subsubcase}\label{3Common3}%
Neither the color on $w_{1}w$ nor the color on $w_{1}u$ is a common color.
\end{subsubcase}
By symmetry, assume that $\phi(uw_{1}) = \kappa - \Delta$, $\phi(uu_{2}) = 2$, $\phi(uu_{3}) = 3$ and $\phi(uu_{4}) = 4$.

If $\Upsilon(uu_{2}) \subseteq C(wu)$, then reassigning $\beta_{1}$ to $uu_{2}$ will take us back to Subcase~\ref{2Common}. So we have that $\Upsilon(uu_{2}) \nsubseteq C(wu)$ and $|\mathcal{U}(u_{2}) \cap C(wu)| \leq \Delta - 2$; similarly, we also have that $|\mathcal{U}(u_{3}) \cap C(wu)| \leq \Delta - 2$ and $|\mathcal{U}(u_{4}) \cap C(wu)| \leq \Delta - 2$.

Suppose that the color $1$ appears at most twice in $\mathbb{S}$; by symmetry, assume that $1 \notin \mathcal{U}(u_{3}) \cup \mathcal{U}(u_{4})$. Thus there exists a $(2, 1, u, u_{4})$-critical path; otherwise, reassigning $1$ to $uu_{4}$ will take us back to Subcase~\ref{3Common2}. But reassigning $1$ to $uu_{3}$ will take us back to Subcase~\ref{3Common2} again. Hence, the color $1$ appears at least three times in $\mathbb{S}$.

Furthermore, $\mathcal{A}(u_{2}) \cup \mathcal{A}(u_{3}) \cup \mathcal{A}(u_{4}) \nsubseteq \mathcal{U}(w_{1})$; otherwise, we have $|\mathcal{U}(w_{1})| \geq 2 + |\mathcal{A}(u_{2})| + |\mathcal{A}(u_{3})| + |\mathcal{A}(u_{4})| > \deg_{G}(w_{1})$, which is a contradiction. Without loss of generality, assume that $\beta_{1} \notin \mathcal{U}(w_{1})$. Clearly, there exists a $(3, \beta_{1}, u, w)$- or $(4, \beta_{1}, u, w)$-critical path. By symmetry, assume that there exists a $(3, \beta_{1}, u, w)$-critical path. There exists a $(4, \beta_{1}, u, u_{2})$-critical path; otherwise, reassigning $\beta_{1}$ to $uu_{2}$ will take us back to Subcase~\ref{2Common}. It follows that $4 \in \mathcal{U}(u_{2})$.

If $2 \notin \mathbb{S}$, then reassigning $2$ to $uu_{4}$ and $\beta_{1}$ to $uu_{2}$ will take us back to Subcase~\ref{2Common}. So we have $2 \in \mathbb{S}$; similarly, we can obtain that $\mathcal{U}(w) \setminus \{1, 3, 4\} \subseteq \mathbb{S}$.

If $3 \notin \mathbb{S}$, then $4 \in \mathcal{U}(w_{1}) \cup \mathcal{U}(u_{3})$; otherwise, reassigning $3, 4$ and $\beta_{1}$ to $uu_{4}, uu_{3}$ and $uu_{2}$ respectively, and then we go back to Subcase~\ref{2Common}. Anyway, we have that $\mul_{\mathbb{S}}(3) + \mul_{\mathbb{S}}(4) \geq 2$.

There exists a $(2, \kappa - \Delta, u, w)$- or $(3, \kappa - \Delta, u, w)$- or $(4, \kappa - \Delta, u, w)$-critical path; otherwise, reassigning $\beta_{1}$ to $uw_{1}$ and $\kappa - \Delta$ to $uw$ results in an acyclic edge coloring of $G$. If $\kappa - \Delta \notin \mathcal{U}(u_{3}) \cup \mathcal{U}(u_{4})$, then reassigning $\kappa - \Delta$ to $uu_{3}$ and $\beta_{1}$ to $uw_{1}$ will take us back to Case \ref{2Common}. This implies that $\kappa - \Delta \in \mathcal{U}(u_{3}) \cup \mathcal{U}(u_{4})$; similarly, we can prove that $\kappa - \Delta \in \mathcal{U}(u_{2}) \cup \mathcal{U}(u_{4})$ and $\kappa - \Delta \in \mathcal{U}(u_{2}) \cup \mathcal{U}(u_{3})$. Hence, the color $\kappa - \Delta$ appears at least twice in $\mathbb{S}$. Therefore, we have

\[
\sum_{\theta \in \mathcal{U}(w) \cup \mathcal{U}(u)} \mul_{\mathbb{S}}(\theta) + |X| \geq 3|\{1\}| + 2|\{\kappa - \Delta\}| + \sum_{\theta \in \mathcal{U}(w) \setminus \{1\}}\mul_{\mathbb{S}}(\theta) \geq \deg_{G}(w) + 3.
\]

\begin{subcase}%
$|\mathcal{U}(w) \cap \mathcal{U}(u)| = 4$.
\end{subcase}
In other words, $\mathcal{U}(u) \subseteq \mathcal{U}(w)$. It follows that $|C(wu)| = \kappa - \deg_{G}(w) + 1 \geq \Delta + 1$ and $|\mathcal{A}(u_{i})| \geq 2$ for $i = 2, 3, 4$. By \autoref{CapEmpty}, we have that $\mathcal{A}(u_{2}), \mathcal{A}(u_{3})$ and $\mathcal{A}(u_{4})$ are pairwise disjoint and $\mathcal{U}(w_{1}) \supseteq \mathcal{A}(u_{2}) \cup \mathcal{A}(u_{3}) \cup \mathcal{A}(u_{4})$, which implies that $|\mathcal{U}(w_{1})| \geq 2 + |\mathcal{A}(u_{2})| + |\mathcal{A}(u_{3})| + |\mathcal{A}(u_{4})| > \deg_{G}(w_{1})$, a contradiction.
\resetcounter
\end{proof}

\section{The main result}
Now, we are ready to prove the main result, \autoref{MResult}.
\begin{figure}[htbp]%
\centering
\subcaptionbox{$(3, 9^{+}, 9^{+})$-face\label{fig:subfig:a}}{\includegraphics{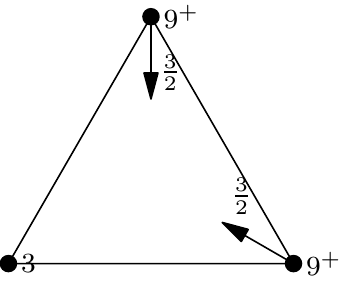}}\hfill~
\subcaptionbox{$(4, 4, 10^{+})$-face\label{fig:subfig:b}}{\includegraphics{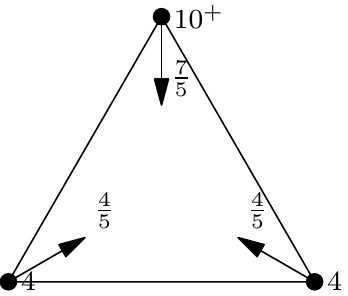}}\hfill~
\subcaptionbox{$(4, 5, 11)$-face\label{fig:subfig:c}}{\includegraphics{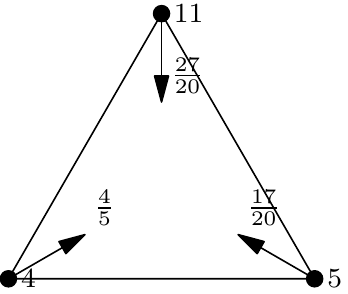}}\hfill~
\subcaptionbox{$(4, 5, \neq 11)$-face\label{fig:subfig:d}}{\includegraphics{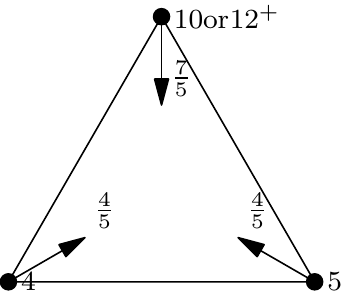}}
\label{fig:contfig:one}
\end{figure}

\begin{figure}[htbp]%
\ContinuedFloat
\centering
\subcaptionbox{$(4, 6, 10^{+})$-face\label{fig:subfig:e}}{\includegraphics{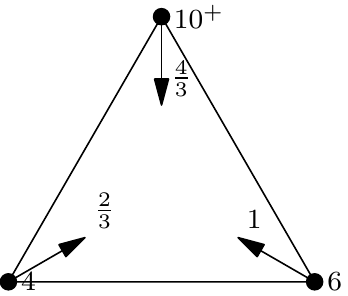}}\hfill~
\subcaptionbox{$(4, 7\mbox{--}8, 7\mbox{--}9)$-face\label{fig:subfig:f}}{\includegraphics{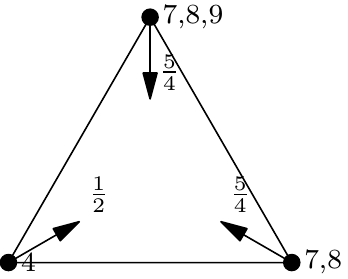}}\hfill~
\subcaptionbox{$(4, 7\mbox{--}8, 10^{+})$-face\label{fig:subfig:g}}{\includegraphics{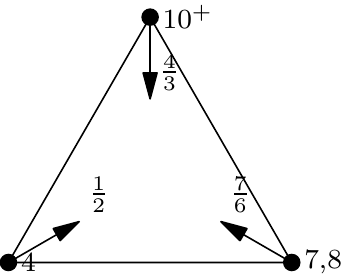}}\hfill~
\subcaptionbox{$(4, 9^{+}, 9^{+})$-face\label{fig:subfig:i}}{\includegraphics{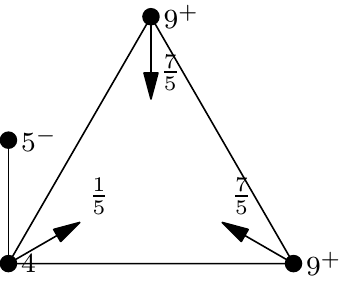}}\hfill~
\subcaptionbox{$(4, 9^{+}, 9^{+})$-face\label{fig:subfig:j}}{\includegraphics{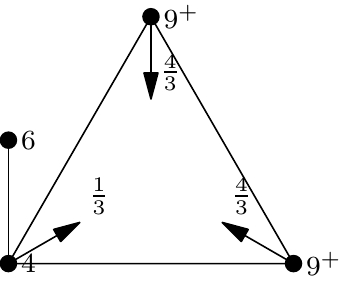}}\hfill~
\subcaptionbox{$(4, 9^{+}, 9^{+})$-face\label{fig:subfig:k}}{\includegraphics{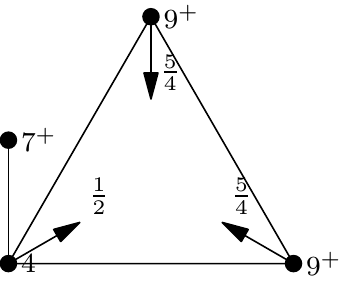}}\hfill~
\subcaptionbox{$(5, 5, 5\mbox{--}7)$-face\label{fig:subfig:l}}{\includegraphics{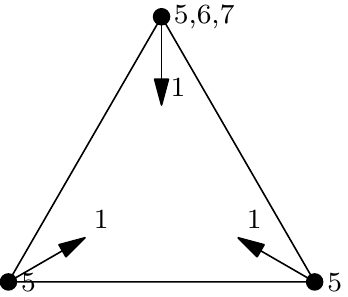}}
\label{fig:contfig:two}
\end{figure}

\begin{figure}[htbp]%
\ContinuedFloat
\centering
\subcaptionbox{$(5, 5, 8^{+})$-face\label{fig:subfig:m}}{\includegraphics{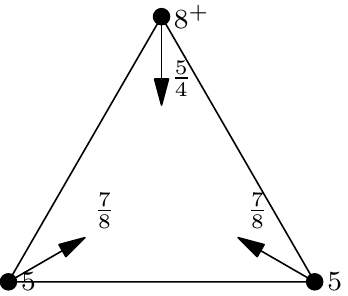}}\hfill~
\subcaptionbox{$(5, 6, 6)$-face\label{fig:subfig:n}}{\includegraphics{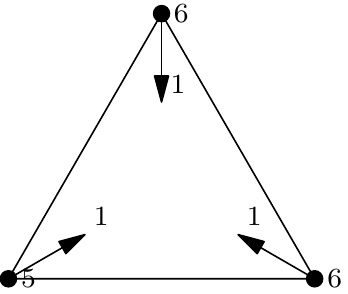}}\hfill~
\subcaptionbox{$(5, 6, 7)$-face\label{fig:subfig:o}}{\includegraphics{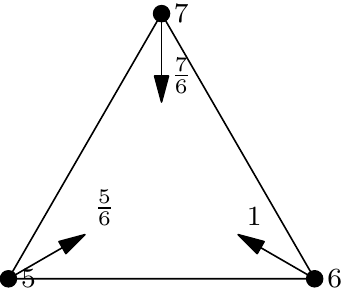}}\hfill~
\subcaptionbox{$(5, 6, 8^{+})$-face\label{fig:subfig:p}}{\includegraphics{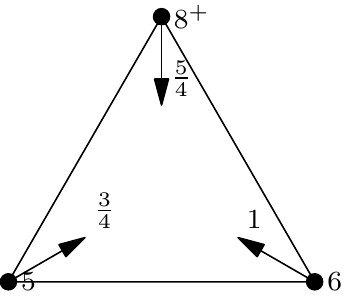}}\hfill~
\subcaptionbox{$(5, 7, 7)$-face\label{fig:subfig:q}}{\includegraphics{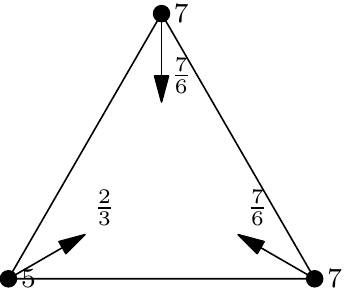}}\hfill~
\subcaptionbox{$(5, 7, 8^{+})$-face\label{fig:subfig:r}}{\includegraphics{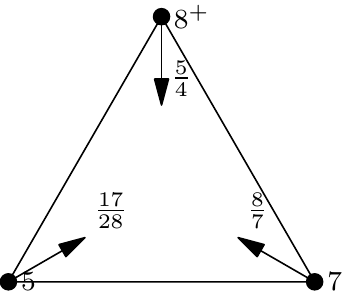}}\hfill~
\subcaptionbox{$(5, 8^{+}, 8^{+})$-face\label{fig:subfig:s}}{\includegraphics{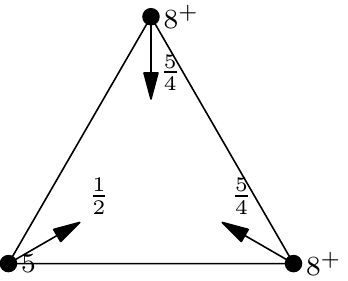}}\hfill~
\subcaptionbox{$(6^{+}, 6^{+}, 6^{+})$-face\label{fig:subfig:t}}{\includegraphics{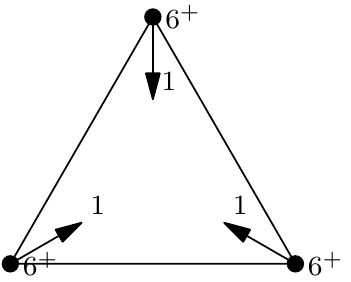}}
\caption{Discharging rules}
\label{fig:contfig:three}
\end{figure}

\begin{proof}[Proof of \autoref{MResult}]%
Suppose that $G$ is a counterexample with $|V| + |E|$ is minimum, and fix $\kappa = \Delta(G) + 6$. Since the hypothesis is minor-closed, it follows that $G$ is a $\kappa$-minimal graph. Let $G^{*}$ be obtained from $G$ by removing all the $2$-vertices. By \autoref{delta2} and \autoref{2+edge}, the minimum degree of $G^{*}$ is at least three. Take a component $H$ of $G^{*}$ and embed it in the plane. In the following, we will do arguments on the graph $H$ to obtain a contradiction.

By \autoref{2+edge}~\ref{A}, we have the following claims.
\begin{claim}\label{8+H}%
If $\deg_{H}(v) < \deg_{G}(v)$, then $\deg_{H}(v) \geq 8 + m$, where $m$ is the number of adjacent $7^{-}$-vertices in $H$.
\end{claim}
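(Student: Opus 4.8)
The plan is to use the hypothesis $\deg_{H}(v) < \deg_{G}(v)$ to locate a $2$-neighbor of $v$ in $G$, feed it into \autoref{2+edge}~\ref{A} to manufacture many high-degree neighbors of $v$, and then argue that these high-degree neighbors are exactly the $8^{+}$-vertices of $H$ that the bound needs. First I would note that $\deg_{H}(v) < \deg_{G}(v)$ means precisely that some neighbor $v_{0}$ of $v$ in $G$ was a $2$-vertex. Since $G$ is $\kappa$-minimal and $\kappa = \Delta + 6 \geq \Delta + 1$, \autoref{2InTriangle} places $v_{0}$ in a triangle, so the other neighbor $w$ of $v_{0}$ satisfies $wv \in E(G)$. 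As $\kappa = \Delta + 6 \geq \deg_{G}(v) + 1$, \autoref{2+edge}~\ref{A} applies and gives that $v$ is adjacent in $G$ to at least $\kappa - \deg_{G}(w) + 2$ vertices, each of $G$-degree at least $\kappa - \deg_{G}(v) + 2$. Substituting $\kappa = \Delta + 6$ and using $\deg_{G}(w), \deg_{G}(v) \leq \Delta$, both thresholds are at least $8$: thus $v$ has at least $8$ distinct neighbors in $G$, each of $G$-degree at least $8$.

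The crucial step is to promote ``high degree in $G$'' to ``high degree in $H$''. I would show that every such neighbor $u$ of $v$ (with $\deg_{G}(u) \geq \kappa - \deg_{G}(v) + 2 \geq 8$) is an $8^{+}$-vertex of $H$. If $u$ has no $2$-neighbor, then deleting the $2$-vertices does not touch its incidences, so $\deg_{H}(u) = \deg_{G}(u) \geq 8$. If $u$ does have a $2$-neighbor, I would apply \autoref{2+edge}~\ref{A} a second time, now centered at $u$: its $2$-neighbor lies in a triangle by \autoref{2InTriangle}, and $\kappa \geq \deg_{G}(u) + 1$, so $u$ is adjacent in $G$ to at least $\kappa - \Delta + 2 = 8$ vertices of $G$-degree at least $8$; none of these is a $2$-vertex, so all survive into $H$ and force $\deg_{H}(u) \geq 8$. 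Since $u$ is a non-$2$-vertex adjacent to $v \in H$, it lies in the component $H$, so in either case $u$ is an $8^{+}$-vertex of $H$ adjacent to $v$.

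To conclude I would simply count. The at-least-$8$ vertices produced are pairwise distinct $8^{+}$-vertices of $H$ adjacent to $v$, hence disjoint from the $m$ neighbors of $v$ that are $7^{-}$-vertices of $H$. Because every neighbor of $v$ in $H$ is either a $7^{-}$-vertex or an $8^{+}$-vertex, the total degree splits as $\deg_{H}(v) = (\mbox{number of } 8^{+}\mbox{-neighbors}) + (\mbox{number of } 7^{-}\mbox{-neighbors}) \geq 8 + m$, which is the claim.

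The step I expect to be the main obstacle is the middle paragraph: one must rule out the possibility that a high-$G$-degree neighbor of $v$ sheds so many edges to deleted $2$-vertices that its degree in $H$ falls below $8$. The recursive second invocation of \autoref{2+edge}~\ref{A} at such a neighbor is exactly what prevents this, and I would take care to verify that its hypotheses genuinely transfer---namely $\kappa \geq \deg_{G}(u) + 1$ and that $u$'s $2$-neighbor sits in a triangle so the edge to its other endpoint is present.
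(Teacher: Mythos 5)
Your proof is correct and is essentially the paper's intended argument: the paper derives this claim directly from \autoref{2InTriangle} and \autoref{2+edge}~\ref{A} (which give $v$ at least $\kappa-\deg_G(w)+2\geq 8$ neighbours of $G$-degree at least $\kappa-\deg_G(v)+2\geq 8$), exactly as you do. Your middle paragraph, the second application of \autoref{2+edge}~\ref{A} at a high-degree neighbour $u$, is sound but is just a re-derivation of the companion \autoref{8-H}; one could instead note that each of the $m$ neighbours that are $7^{-}$-vertices of $H$ has the same degree in $G$ by that claim, hence is distinct from the eight guaranteed neighbours of $G$-degree at least $8$.
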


\begin{claim}\label{8-H}%
If $\deg_{H}(v) \leq 7$, then $\deg_{G}(v) = \deg_{H}(v)$.
\end{claim}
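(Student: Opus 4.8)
The plan is to obtain \autoref{8-H} as the contrapositive of \autoref{8+H}. First I would recall that $H$ arises from $G$ by deleting every $2$-vertex, so $\deg_{H}(v) \le \deg_{G}(v)$ always, and $\deg_{H}(v) < \deg_{G}(v)$ holds exactly when $v$ is adjacent in $G$ to a deleted $2$-vertex. Assuming for contradiction that $\deg_{H}(v) \le 7$ while $\deg_{H}(v) < \deg_{G}(v)$, I would invoke \autoref{8+H} to get $\deg_{H}(v) \ge 8 + m$ with $m \ge 0$, so $\deg_{H}(v) \ge 8$, contradicting $\deg_{H}(v) \le 7$. Thus $v$ has no $2$-neighbour and $\deg_{G}(v) = \deg_{H}(v)$.

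Alternatively, and to keep the deduction independent of \autoref{8+H}, I would derive the bound directly from \autoref{2+edge}~\ref{A}, exactly as that lemma is used to establish \autoref{8+H}. Supposing $\deg_{H}(v) < \deg_{G}(v)$, let $v_{0}$ be a deleted $2$-neighbour of $v$ with $N_{G}(v_{0}) = \{w, v\}$. Since $\kappa = \Delta + 6 \ge \Delta + 1$, \autoref{2InTriangle} puts $v_{0}$ in a triangle, giving $wv \in E(G)$; and $\deg_{G}(v) \le \Delta < \kappa$ ensures the hypothesis $\kappa \ge \deg(v) + 1$. \autoref{2+edge}~\ref{A} then guarantees at least $\kappa - \deg(w) + 2 = \Delta - \deg(w) + 8 \ge 8$ neighbours of $v$ of degree at least $\kappa - \deg(v) + 2 = \Delta - \deg(v) + 8 \ge 8$. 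Each such neighbour is an $8^{+}$-vertex, hence a $3^{+}$-vertex that survives into $H$, so $\deg_{H}(v) \ge 8$, again contradicting $\deg_{H}(v) \le 7$.

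I do not expect any real obstacle here: the statement is an immediate consequence of material already recorded in the excerpt. The only points needing care are the routine verifications that the hypotheses of \autoref{2+edge}~\ref{A} hold (the triangle edge $wv \in E(G)$ supplied by \autoref{2InTriangle}, and $\kappa \ge \deg(v) + 1$), together with the observation that the guaranteed high-degree neighbours genuinely remain in $H$ — which is automatic, since their degree is at least $\Delta - \deg(v) + 8 \ge 8$, far above the deletion threshold $2$. The specific choice $\kappa = \Delta + 6$ is precisely what collapses the lemma's thresholds into the clean constant $8$ appearing in both \autoref{8+H} and \autoref{8-H}.
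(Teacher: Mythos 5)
Your proposal is correct and matches the paper, which derives both \autoref{8+H} and \autoref{8-H} in one stroke from \autoref{2+edge}~\ref{A}; your second route spells out exactly that derivation (the triangle from \autoref{2InTriangle}, the count $\kappa-\deg(w)+2\geq 8$ of neighbours of degree at least $\kappa-\deg(v)+2\geq 8$ surviving into $H$), and your first route via the contrapositive of \autoref{8+H} is an equally valid shortcut.
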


From the Euler's formula, we have the following equality:
\begin{equation}%
\sum_{v \in V(H)} (2\deg_{H}(v) - 6) + \sum_{f \in F(H)} (\deg_{H}(f) - 6) = - 12
\end{equation}

Assign the initial charge of every vertex $v$ to be $2\deg_{H}(v) - 6$ and the initial charge of every face $f$ to be $\deg_{H}(f) - 6$. Clearly, the sum of the initial charge of vertices and faces is $-12$. We design appropriate discharging rules and redistribute charge among the vertices and faces, such that the final charge of every vertex and every face is nonnegative, which derive a contradiction.

\bigskip
{\bf Discharging Rules:}
\begin{enumerate}[label = (R\arabic*)]%
\item If $w$ is a $4$-vertex adjacent to a $5^{-}$-vertex $u$, then $w$ sends $\frac{4}{5}$ to each face incident with $wu$, and sends $\frac{1}{5}$ to each other face.
\item If $w$ is a $4$-vertex adjacent to a $6$-vertex $u$, then $w$ sends $\frac{2}{3}$ to each face incident with $wu$, and sends $\frac{1}{3}$ to each other face.
\item If $w$ is a $4$-vertex which is not adjacent to $6^{-}$-vertices, then $w$ sends $\frac{1}{2}$ to each incident face.
\item All the rules regarding $3$-faces are in the Fig~\subref{fig:subfig:a}--\subref{fig:subfig:t}.
\item Every $9^{+}$-vertex sends $1$ to each incident $4^{+}$-face.
\item Every vertex with degree $5, 6, 7$ or $8$ sends $\frac{1}{2}$ to each incident $4^{+}$-face.
\end{enumerate}

{\noindent\em Computing the final charge of faces.}
\bigskip

Let $f = w_{1}w_{2}w_{3}$ be a $3$-face with $\deg_{H}(w_{1}) \leq \deg_{H}(w_{2}) \leq \deg_{H}(w_{3})$.

If $w_{1}$ is a $3$-vertex, then \autoref{3++edge} implies that both $w_{2}$ and $w_{3}$ are $9^{+}$-vertices in $G$, and they also are $9^{+}$-vertices in $H$ by \autoref{8+H}, thus $f$ is a $(3, 9^{+}, 9^{+})$-face in $H$ and the final charge is $-3 + 2 \times \frac{3}{2} = 0$.

If $w_{1}w_{2}$ is a $(4, 4)$-edge, then \autoref{4Sum} implies that $w_{3}$ is a $12^{+}$-vertex in $G$, and it is a $10^{+}$-vertex in $H$ by \autoref{8+H}, thus $f$ is a $(4, 4, 10^{+})$-face and the final charge is $- 3 + 2 \times \frac{4}{5} + \frac{7}{5} = 0$.

If $w_{1}w_{2}$ is a $(4, 5)$-edge, then \autoref{4Sum} implies that $w_{3}$ is a $11^{+}$-vertex in $G$, and it is a $10^{+}$-vertex in $H$ by \autoref{8+H}, thus the final charge of $f$ is $-3 + \frac{4}{5} + \frac{17}{20} + \frac{27}{20} = 0$ if $\deg_{H}(w_{3}) = 11$, or $- 3 + 2 \times \frac{4}{5} + \frac{7}{5} = 0$ if $w_{3}$ is a $10$- or $12^{+}$-vertex in $H$.

If $w_{1}w_{2}$ is a $(4, 6)$-edge, then \autoref{4Sum} implies that $w_{3}$ is a $10^{+}$-vertex in $G$, and it is a $10^{+}$-vertex in $H$ by \autoref{8+H}, and then the final charge is $- 3 + \frac{2}{3} + 1 + \frac{4}{3} = 0$.

If $\deg_{H}(w_{1}) = 4$, $\deg_{H}(w_{2}) \in \{7, 8\}$ and $\deg_{H}(w_{3}) \in \{7, 8, 9\}$, then the final charge of $f$ is $-3 + \frac{1}{2} + 2 \times \frac{5}{4} = 0$.

If $\deg_{H}(w_{1}) = 4$, $\deg_{H}(w_{2}) \in \{7, 8\}$ and $\deg_{H}(w_{3}) \geq 10$, then the final charge of $f$ is $-3 + \frac{1}{2} + \frac{7}{6} + \frac{4}{3} = 0$. 

Suppose that $f$ is a $(4, 9^{+}, 9^{+})$-face. If $w_{1}$ is adjacent to a $5^{-}$-vertex $u$, then $w_{1}$ sends $\frac{1}{5}$ to $f$, and then the final charge of $f$ is $-3 + \frac{1}{5} + 2 \times \frac{7}{5} = 0$; if $w_{1}$ is adjacent to a $6$-vertex $u$, then $w_{1}$ sends $\frac{1}{3}$ to $f$, and then the final charge of $f$ is $-3 + \frac{1}{3} + 2 \times \frac{4}{3} = 0$; if $w_{1}$ is not adjacent to $6^{-}$-vertices, then $w_{1}$ sends $\frac{1}{2}$ to $f$, and then the final charge of $f$ is $-3 + \frac{1}{2} + 2 \times \frac{5}{4} = 0$.

If $\deg_{H}(w_{1}) = \deg_{H}(w_{2}) = 5$ and $\deg_{H}(w_{3}) \in \{5, 6, 7\}$, then the final charge of $f$ is $-3 + 3 \times 1 = 0$.

If $f$ is a $(5, 5, 8^{+})$-face, then the final charge is $-3 + 2 \times \frac{7}{8} + \frac{5}{4} = 0$.

If $f$ is a $(5, 6, 6)$-face, then the final charge is $-3 + 3 \times 1 = 0$.

If $f$ is a $(5, 6, 7)$-face, then the final charge is $-3 + \frac{5}{6} + 1 + \frac{7}{6} = 0$.

If $f$ is a $(5, 6, 8^{+})$-face, then the final charge is $-3 + \frac{3}{4} + 1 + \frac{5}{4} = 0$.

If $f$ is a $(5, 7, 7)$-face, then the final charge is $-3 + \frac{2}{3} + 2 \times \frac{7}{6} = 0$.

If $f$ is a $(5, 7, 8^{+})$-face, then the final charge is $-3 + \frac{17}{28} + \frac{8}{7} + \frac{5}{4} = 0$.

If $f$ is a $(5, 8^{+}, 8^{+})$-face, then the final charge is $-3 + \frac{1}{2} + 2 \times \frac{5}{4} = 0$.

If $f$ is a $(6^{+}, 6^{+}, 6^{+})$-face, then the final charge is $-3 + 3 \times 1 = 0$.

Next, we compute the final charge of $4$-faces. Let $w_{1}w_{2}w_{3}w_{4}$ be a $4$-face with $w_{2}$ having the minimum degree on the boundary. If $\deg_{H}(w_{2}) \geq 5$, then the final charge of $f$ is at least $-2 + 4 \times \frac{1}{2} = 0$. If $\deg_{H}(w_{1}), \deg_{H}(w_{3}) \geq 9$, then the final charge is at least $-2 + 2 \times 1 = 0$. So we may assume that $\deg_{H}(w_{2}) \in \{3, 4\}$ and $\deg_{H}(w_{1}) \leq 8$. By \autoref{3++edge} and \autoref{8+H}, we have that $\deg_{H}(w_{2}) = 4$ and $\deg_{G}(w_{1}) = \deg_{H}(w_{1}) \leq 8$. By \autoref{4Sum} and discharging rules, the face $f$ receives at least $\frac{1}{2}$ from each incident vertex, so the final charge of $f$ is at least $-2 + 4 \times \frac{1}{2} = 0$.

Suppose that $f$ is a $5$-face. If $f$ is incident with a $9^{+}$-vertex, then the final charge is at least $-1 + 1 = 0$. So we may assume that $f$ is incident with five $8^{-}$-vertices. It is obvious that $f$ is incident with at least two $5^{+}$-vertices, and then the final charge is at least $-1 + 2 \times \frac{1}{2} = 0$.

If $f$ is a $6^{+}$-face, then the final charge is at least $\deg_{H}(f) - 6 \geq 0$. 

\bigskip
{\noindent\em Computing the final charge of vertices.}

\paragraph{\indent Let $v$ be a $3$-vertex.} Clearly, the final charge is zero.

\paragraph{\indent Let $v$ be a $4$-vertex.} If $v$ is adjacent to a $5^{-}$-vertex, then \autoref{4Sum} and \autoref{8+H} implies that $v$ is adjacent to three $9^{+}$-vertices, and then the final charge is $2 - 2 \times \frac{4}{5} - 2 \times \frac{1}{5} = 0$. If $v$ is adjacent to a $6$-vertex, then \autoref{4Sum} and \autoref{8+H} implies that $v$ is adjacent to three $9^{+}$-vertices, and then the final charge is $2 - 2 \times \frac{2}{3} - 2 \times \frac{1}{3} = 0$. If $v$ is not adjacent to $6^{-}$-vertices, then the final charge is $2 - 4 \times \frac{1}{2} = 0$.

\paragraph{\indent Let $v$ be a $5$-vertex with neighbors $v_{1}, v_{2}, \dots, v_{5}$ in anticlockwise order.}  If $v$ sends at most $\frac{4}{5}$ to each incident face, then the final charge is at least $4 - 5 \times \frac{4}{5} = 0$. So we may assume that $v$ sends more than $\frac{4}{5}$ to some face $f$.

If $f$ is a $(5, 5, 5)$-face, then \autoref{5Sum} and \autoref{8+H} implies that the other three vertices adjacent to $v$ are $9^{+}$-vertices, and then the final charge of $v$ is at least $4 - 1 - 2 \times \frac{7}{8} - 2 \times \frac{1}{2} > 0$.

If $f$ is a $(5, 5, 6)$-face, then \autoref{5Sum} and \autoref{8+H} implies that the other three vertices adjacent to $v$ are $8^{+}$-vertices, and then the final charge of $v$ is at least $4 - 1 - \frac{7}{8} - \frac{3}{4} - 2 \times \frac{1}{2} > 0$.

If $f$ is a $(5, 5, 7)$-face, then \autoref{5Sum} and \autoref{8+H} implies that the other three vertices adjacent to $v$ are $7^{+}$-vertices, and then the final charge of $v$ is at least $4 - 2 \times 1 - 3 \times \frac{2}{3} = 0$.

If $f$ is a $(5, 6, 6)$-face, then \autoref{5Sum} and \autoref{8+H} implies that the other three vertices adjacent to $v$ are $7^{+}$-vertices, and then the final charge of $v$ is at least $4 - 1 - 2 \times \frac{5}{6} - 2 \times \frac{2}{3} = 0$.

If $v$ sends at most $\frac{1}{2}$ to an incident face, then the final charge of $v$ is at least $4 - 4 \times \frac{7}{8} - \frac{1}{2} = 0$. So we may assume that the $5$-vertex $v$ sends more than $\frac{1}{2}$ to each incident face, thus $v$ is incident with five $3$-faces.

Suppose that $f = vv_{1}v_{2}$ is a $3$-face with $\deg_{H}(v_{1}) = 5$ and $\deg_{H}(v_{2}) \geq 8$. By the excluded cases in the above, the vertex $v_{5}$ is an $8^{+}$-vertex. Since $v$ sends more than $\frac{1}{2}$ to the $3$-face $vv_{2}v_{3}$, the vertex $v_{3}$ is a $7^{-}$-vertex. Similarly, the vertex $v_{4}$ is also a $7^{-}$-vertex. Now, the $3$-face $vv_{3}v_{4}$ is a $(5, 7^{-}, 7^{-})$-face. By the excluded cases, we only have to consider the edge $v_{3}v_{4}$ is a $(6, 7)$- or $(7, 6)$- or $(7, 7)$-edge. If $v_{3}v_{4}$ is a $(7, 7)$-edge, then the final charge of $v$ is at least $4 - 2 \times \frac{7}{8} - 2 \times \frac{17}{28} - \frac{2}{3} > 0$. If $v_{3}v_{4}$ is $(6, 7)$- or $(7, 6)$-edge, then the final charge of $v$ is at least $4 - 2 \times \frac{7}{8} - \frac{3}{4} - \frac{5}{6} - \frac{17}{28} > 0$.

Suppose that $f = vv_{1}v_{2}$ is a $(5, 6, 7)$-face with $\deg_{H}(v_{1}) = 6$ and $\deg_{H}(v_{2}) = 7$. By the excluded cases, the vertex $v_{3}$ is a $6^{+}$-vertex and the vertex $v_{5}$ is a $7^{+}$-vertex. By \autoref{3++edge} and \autoref{8+H}, the vertex $v_{4}$ is a $4^{+}$-vertex. If $\deg_{H}(v_{4}) = 4$, then \autoref{4Sum} and \autoref{8+H} implies that both $v_{3}$ and $v_{5}$ are $11^{+}$-vertices, thus the final charge of $v$ is at least $4 - \frac{5}{6} - \frac{3}{4} - \frac{17}{28} - 2 \times \frac{17}{20} > 0$. By the excluded cases, the vertex $v_{4}$ cannot be a $5$-vertex. If $\deg_{H}(v_{4}) = 6$, then $\deg_{H}(v_{3}) \geq 7$, and then the final charge of $v$ is at least $4 - \frac{2}{3} - 4 \times \frac{5}{6} = 0$. If $\deg_{H}(v_{4}) \geq 7$, then the final charge is at least $4 - \frac{2}{3} - 4 \times \frac{5}{6} = 0$.

Suppose that $f = vv_{1}v_{2}$ is a $(5, 4, 11)$-face. By \autoref{4Sum} and \autoref{8+H}, the vertex $v_{5}$ is a $10^{+}$-vertex. If one of $v_{3}$ and $v_{4}$ is a $8^{+}$-vertex, then $v$ sends $\frac{1}{2}$ to an incident $3$-face, a contradiction. So we may assume that $\deg_{H}(v_{3}), \deg_{H}(v_{4}) \leq 7$. By the excluded cases, the edge $v_{3}v_{4}$ is a $(7, 7)$-edge, and then the final charge of $v$ is at least $4 - 2 \times \frac{17}{28} - \frac{2}{3} - 2 \times \frac{17}{20} > 0$.

\paragraph{\indent Let $v$ be a $6$-vertex.} The final charge is at least $6 - 6 \times 1 = 0$.

\paragraph{\indent Let $v$ be a $7$-vertex.} If $v$ sends at most $\frac{1}{2}$ to an incident face, then the final charge is at least $8 - 6 \times \frac{5}{4} - \frac{1}{2} = 0$. So we may assume that $v$ sends more than $\frac{1}{2}$ to each incident face, thus $v$ is incident with seven $3$-faces. By \autoref{4Sum}~(b) and \autoref{8+H}, the vertex $v$ is not incident with $(4, 7, 9^{-})$-faces. Now, the vertex $v$ sends at most $\frac{7}{6}$ to each incident face. If $v$ is incident with a $(5^{-}, 5^{-}, 7)$- or $(6^{+}, 6^{+}, 7)$-face, then the final charge is at least $8 - 6 \times \frac{7}{6} - 1 = 0$. So every face incident with $v$ is a $(5^{-}, 6^{+}, 7)$-face, but the vertex $v$ is a $7$-vertex and the number $7$ is odd, a contradiction.

\paragraph{\indent Let $v$ be an $8$-vertex.} Every $8$-vertex sends at most $\frac{5}{4}$ to each incident face, thus the final charge is at least $10 - 8 \times \frac{5}{4} = 0$.

\paragraph{\indent Let $v$ be a $9$-vertex.}

If $\deg_{G}(v) > 9$, then \autoref{8+H} implies that $v$ is adjacent to at most one $7^{-}$-vertex in $H$, and then the final charge of $v$ is at least $12 - 7 \times 1 - 2 \times \frac{3}{2} > 0$. So we may assume that $\deg_{G}(v) = \deg_{H}(v) = 9$.

Suppose that $(3, 9)$-edge $uv$ is incident with two $3$-faces. By \autoref{L9}, the vertex $v$ is adjacent to eight $8^{+}$-vertices, and then the final charge is at least $12 - 7 \times 1 - 2 \times \frac{3}{2} > 0$. So every $(3, 9)$-edge $uv$ is incident with at most one $3$-face.

Let $\tau$ be the number of incident $4^{+}$-faces. If $\tau \geq 4$, then the final charge is at least $12 - 5 \times \frac{3}{2} - 4 \times 1 > 0$. Since $\deg_{G}(v) = \deg_{H}(v) = 9$, \autoref{4Sum} implies that $v$ is not incident with face~\subref{fig:subfig:i} or \subref{fig:subfig:j}. If $\tau \leq 3$, then the final charge is at least $12 - \tau - 2\tau \times \frac{3}{2} - (9 - 3\tau) \times \frac{5}{4} \geq 0$.

\paragraph{\indent Let $v$ be a $10$-vertex.} If $\deg_{G}(v) > 10$, then \autoref{8+H} implies that $v$ is adjacent to at most two $7^{-}$-vertices, and then the final charge is at least $14 - 4 \times \frac{3}{2} - 6 \times 1 > 0$. So we may assume that $\deg_{G}(v) = \deg_{H}(v) = 10$. Hence, the vertex $v$ is not incident with face~\subref{fig:subfig:b}, \subref{fig:subfig:d} or \subref{fig:subfig:i}, and thus $v$ sends $\frac{3}{2}, \frac{4}{3}, \frac{5}{4}$ or $1$ to each incident face.

If $v$ is incident with at least two $4^{+}$-faces, then the final charge is at least $14 - 8 \times \frac{3}{2} - 2 \times 1 = 0$. Hence, the vertex $v$ is incident with at most one $4^{+}$-face. \autoref{4Sum} implies that $v$ is adjacent to at most five $4^{-}$-vertices. Let $s$ be the number of incident $(10, 3, 9^{+})$-faces, and let $s^{*}$ be the number of incident $(10, 4, 6^{+})$-faces.

If $s \leq 4$, then the final charge is at least $14 - s \times \frac{3}{2} - (10 - s) \times \frac{4}{3} = \frac{2}{3} - \frac{s}{6} \geq 0$. So we may assume that $s \geq 5$, and then the number of adjacent $3$-vertices is at least three.

\begin{enumerate}[label = (\arabic*)]%
\item $s \in \{5, 6\}$.

If $s^{*} = 0$, then the final charge is at least $14 - 6 \times \frac{3}{2} - 4 \times \frac{5}{4}  = 0$. If $v$ is incident with exactly one $4^{+}$-face, then the final charge is at least $14 - 6 \times \frac{3}{2} - 1 - 3 \times \frac{4}{3} = 0$. So we may assume that $s^{*} \geq 1$ and $v$ is not incident with any $4^{+}$-face. Clearly, the vertex $v$ is incident with exactly six $(10, 3, 9^{+})$-faces and $s = 6$. It is obvious that $v$ is adjacent to at least one $4$-vertex. \autoref{3-10vertex} implies that the vertex $v$ is adjacent to exactly three $3$-vertices, one $4$-vertex and six $6^{+}$-vertices. Hence, it is incident with exactly two $(10, 6^{+}, 6^{+})$-faces, and then the final charge is at least $14 - 6 \times \frac{3}{2} - 2 \times \frac{4}{3} - 2 \times 1 > 0$.
\item $s \geq 7$.

 Clearly, the vertex $v$ is adjacent to at least four $3$-vertices. \autoref{3-10vertex} implies that the vertex $v$ is adjacent to exactly four $3$-vertices and six $6^{+}$-vertices. Hence, the vertex $v$ is incident with two $(10, 6^{+}, 6^{+})$-faces, or one $(10, 6^{+}, 6^{+})$-face and one $4^{+}$-face, thus the final charge is at least $14 - 8 \times \frac{3}{2} - 2 \times 1 = 0$.
\end{enumerate}

\paragraph{\indent Let $v$ be an $11$-vertex.}

If $\deg_{G}(v) > 11$, then $v$ is adjacent to at most three $7^{-}$-vertices in $H$, and then the final charge is at least $16 - 6 \times \frac{3}{2} - 5 \times 1 > 0$. So we may assume that $\deg_{G}(v) = \deg_{H}(v) = 11$.

If $v$ sends at most $1$ to an incident face, then the final charge is at least $16 - 10 \times \frac{3}{2} - 1 = 0$. So we may assume that $v$ is not incident with $4^{+}$-faces and is not incident with $(11, 6^{+}, 6^{+})$-faces. Since the degree of $v$ is odd, the vertex $v$ cannot be incident with eleven $(11, 5^{-}, 6^{+})$-faces. So $v$ is incident with a $(11, 5^{-}, 5^{-})$-face $f$. \autoref{3++edge} and \autoref{4Sum} implies that the face $f$ is a $(4, 5, 11)$-face or $(5, 5, 11)$-face. Hence, the vertex $v$ is adjacent to at most four $3$-vertices. If $v$ is adjacent to at most three $3$-vertices, then the final charge is at least $16 - 6 \times \frac{3}{2} - 5 \times \frac{7}{5} = 0$. Hence, the vertex $v$ is adjacent to exactly four $3$-vertices, see \autoref{W11}. If $f$ is a $(5, 5, 11)$-face, then the final charge of $v$ is $16 - 8 \times \frac{3}{2} - 3 \times \frac{5}{4} > 0$. If $f$ is a $(4, 5, 11)$-face, then the final charge is $16 - 8 \times \frac{3}{2} - \frac{5}{4} - \frac{27}{20} - \frac{7}{5} = 0$.

\begin{figure}%
\centering
\includegraphics{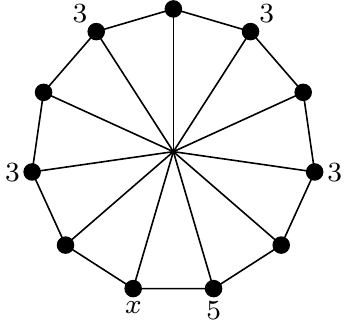}
\caption{The vertex $x$ is a $4$- or $5$-vertex.}
\label{W11}
\end{figure}

\paragraph{\indent Let $v$ be a $12^{+}$-vertex.} The final charge is at least $2 \deg_{H}(v) - 6 - \deg_{H}(v) \times \frac{3}{2} = \frac{1}{2} \deg_{H}(v) - 6 \geq 0$.
\end{proof}

\vskip 0mm \vspace{0.3cm} \noindent{\bf Acknowledgments.} This project was supported by the National Natural Science Foundation of China (11101125) and partially supported by the Fundamental Research Funds for Universities in Henan. The authors would like to thank the referees for their valuable comments.

\end{document}